\pgfplotsset{width=12cm,compat=1.9}
\theoremstyle{plain}
\newtheorem{thm}{Theorem}[section]
\newtheorem{lem}[thm]{Lemma}
\newtheorem{prop}[thm]{Proposition}
\theoremstyle{definition}
\newtheorem{defn}{Definition}[section]
\theoremstyle{remark}
\newtheorem{oss}{Remark}
\newcommand{\norm}[1]{\left\lVert#1\right\rVert}
\newcommand{\p}{\mathbb{P}}
\newcommand{\E}{\mathbb{E}}
\newcommand{\fip}[1]{\Phi_+(#1 -a_{_+})}
\newcommand{\fim}[1]{\Phi_-(#1 -a_{_-})}
\newcommand{\dpsip}[1]{\Psi'_+(#1)}
\newcommand{\dpsim}[1]{\Psi'_-(#1)}
\newcommand{\zeroinf}{(0,\infty)}
\newcommand{\mb}[1]{\mathbb{#1}}
\newcommand{\mc}[1]{\mathcal{#1}}
\newcommand{\m}{\overline{m}}
\newcommand{\tp}{\tilde{\p}}
\newcommand{\te}{\tilde{\E}}
\newcommand{\bpl}{\beta_{_+}}
\newcommand{\bmin}{\beta_{_-}}
\title{On a family of critical growth-fragmentation semigroups and refracted L\'evy processes}
\author{Benedetta Cavalli\footnote{Institut für Mathematik, Universität Zürich, Switzerland}}
\begin{document}
	\maketitle
	\vspace{-0.6 cm}

\begin{abstract}
The growth-fragmentation equation models systems of particles that grow and split as time proceeds. An important question concerns the large time asymptotic of its solutions.
Doumic and Escobedo ($2016$) observed that when growth is a  linear function of the mass and fragmentations are  homogeneous, the so-called Malthusian behaviour fails. 
In this work we further analyse the critical case by considering a piecewise linear growth, namely
\begin{equation}
c(x) =
\begin{cases}
a_{_-} x \quad \quad x < 1 \\
a_{_+} x \quad \quad x \geq 1,
\end{cases}
\end{equation}
with $0 < a_{_+} < a_{_-}$. We give necessary and sufficient conditions on the coefficients ensuring the Malthusian behaviour with exponential speed of convergence to an asymptotic profile,  and also provide an explicit expression of the latter. Our approach relies crucially on properties of so-called  refracted Lévy processes that arise naturally in this setting.
\end{abstract}

\vspace{0.4 cm}

\footnotesize\textit{Keywords:} Growth-fragmentation equation, transport equations, cell division equation, one parameter semigroups, spectral analysis, Malthus exponent, Feynman-Kac formula, piecewise deterministic Markov processes, Lévy processes, refracted Lévy processes

\textit{Classification MSC:} 34K08 , 35Q92, 47D06, 47G20, 45K05, 60G51, 	60J99  

\normalsize

\section{Introduction} 
	To describe the growth and division of particles over time, one of the key equations in the field of structured population dynamics is the so-called \textit{growth-fragmentation equation}. This equation was first introduced at the end of the sixties to model cells dividing by fission \cite{BA67}, but it is also used to describe protein polymerization \cite{CLODLMP09}, neuron networks \cite{KPS13}, \cite{PPS13}, the TCP/IP window size protocol for the internet \cite{BMR02} and many other models. For all these applications, the common point is that the ‘particles’ under concern (which can be cells, polymers, dusts, windows, etc.) are well-characterized by their ‘size’, i.e. a one-dimensional quantity which grows over time at a certain rate depending on the size, and which is distributed among the offspring when the particle divides, in a way in which the total mass is conserved. In this work, we assume that the dislocations are \textit{homogeneous}. This means, roughly speaking, that particles dislocate at a constant rate $K>0$ independently of their sizes and that the distribution of the ratios of the size daughter/mother does not depend on the size of the mother.  \\
The population is thus described by the concentration of particles of size\footnote{In the sequel we will refer to it also using the term ‘mass’.} $x > 0$ at time $t \geq 0$, denoted by $u_t(x)$. 
The evolution of $u_t(x)$ is governed by the following integro-partial differential equation, which can be obtained either by a mass balance, in a similar way as for fluid dynamics (\cite{BSC11}, \cite{MD86}), or by considering the Kolmogorov equation for the underlying jump process  (\cite{CLOEZ17}, \cite{DHNR15}):
\begin{equation}
\label{gfehomog}
\partial_t u_t(x) + \partial_x(c(x)u_t(x))= \int_0^{1} u_t \Big(\frac{x}{s} \Big) s^{-1} \rho (s) ds -  K u_t(x),
\end{equation}
where the initial condition $u_0$ is prescribed. \\
The function $c:(0,\infty) \to (0,\infty)$ is the so-called \textit{growth rate} and it expresses the rate at which each particle grows according to its size. In the literature, $c$ has been widely considered to be a continuous function and some of the most extensively studied cases are those for which $c$ is constant or $c$ is a linear function. \\
The function $\rho : [0,1] \to [0, \infty)$, represents the so-called \textit{fragmentation kernel} and gives the rate at which a particle of mass $sx$ appears as a result of the dislocation of a particle of mass $x$. We assume that
\begin{equation}
\label{conditiononrhonecessary}
\int_0^1 s^{- \epsilon}  \rho(s) ds < + \infty,
\end{equation} 
for some $\epsilon > 0$. The conservation of mass at dislocation events gives the identity
\begin{equation}
K  = \int_0^1 s\rho(s) ds. 
\end{equation}

Due to the wide range of applications in mathematical modelling, existence, uniqueness and long term behaviour of the solutions of the growth fragmentation equation have been studied over many years. One of the major issues consists in the study of the asymptotic behaviour of the solutions $u_t$.} As far  as the asymptotic behaviour is concerned, it was proved that, under fairly general balance assumptions on the growth and the fragmentation rates, the population grows exponentially over time but tends to a steady profile. In other words, under such assumptions, there exists a constant $\lambda$, called the \textit{Malthus exponent}, for which $e^{- \lambda t}u_t$ converges, in some suitable space, to a so-called \textit{asymptotic profile} $\nu$. Remarkably, this trend to equilibrium is not only a mathematical result but is also supported by biological evidence [27], which may also explain the very fast de-synchronization of cells, for instance [7]. Ideally, one also wishes to have informations about the \textit{rate of convergence}, i.e. ensuring the existence of some $\beta > 0$ such that $e^{\beta t} (e^{- \lambda t}u_t - \nu) $ converges to $0$.\\
A key instrument is the so-called \textit{growth-fragmentation operator}, which, in our case, has the form
\begin{equation}
\label{gfoperator}
\mathcal{A} f (x)  = 
c(x) f'(x) + \int_0^1 (f(sx) - sf(x))  \rho(s) ds, \quad \quad x >0,
\end{equation}
and is defined for smooth compactly supported $f$, say. \\ The weak form of the growth-fragmentation equation $\eqref{gfehomog}$ is
\begin{equation}
\label{wgfe}
\frac{d}{dt} \langle u_t, f \rangle = \langle u_t, \mc A f \rangle, 
\end{equation}
where $\langle \mu , g \rangle$ denotes $\int g(x) \mu(dx)$ for any measure $\mu$ and any function $g$, whenever the integral makes sense. If $h$ is a non negative measurable function, we define $\langle h , g \rangle \coloneqq \langle \mu , g \rangle$ with $\mu(dx) = h(x) dx$.
Under quite simple general assumptions on the rates $c$ and $\rho$, $\mc A$ represents the infinitesimal generator of a unique strongly continuous positive semigroup $(T_t)_{t \geq 0}$ and the solution of \eqref{wgfe} can be represented as
\begin{equation}
\label{weaksolutiongfe}
\langle u_t, f \rangle = \langle u_0, T_tf \rangle.
\end{equation}
In general, there is no explicit expression for the growth-fragmentation semigroup $(T_t)_{t \geq 0}$ and many works are concerned with its asymptotic behaviour. 
Typically, one expects, under proper assumptions on the growth and fragmentation rates, that there exists a \textit{leading eigenvalue} $\alpha \in \mathbb{R}$ such that
\begin{equation}
\label{convergenceofthesemigroup}
\lim_{t \to \infty} e^{-\alpha t} T_t f(x) = h(x) \langle \nu, f \rangle, \quad x > 0,
\end{equation}
at least for every continuous and compactly supported function $f: (0, \infty) \to \mathbb{R}$. Here, $\nu(dx)$ is a Radon measure, commonly called \textit{asymptotic profile}, and $h$ a positive function. 

In the literature, the above convergence is often referred to as \textit{Malthusian behaviour}. When it  holds, it is furthermore important to estimate the speed of convergence. In fact, say for $\alpha > 0$, an indefinite exponential growth is unrealistic in practice due to several effects such as competition between individuals for space and resources. As a consequence, the growth fragmentation equation can be reliable only for describing rather early stages of the evolution of the population, and the notions of leading eigenvalue and asymptotic profile are meaningful only when convergence  \eqref{convergenceofthesemigroup} occurs fast enough. 

One of the main tools that have been used for establishing the validity of \eqref{convergenceofthesemigroup} is the spectral theory of semigroups and operators. Several authors have shown, under proper assumptions on the growth and fragmentation rates, the existence of positive eigenelements\footnote{The existence of eigenelements has been proved also in the case of much more general generators. In particular, the fragmentation kernel does not need to be homogeneous.} associated to the leading eigenvalue of the operator $\mc A$ and its dual $\mc A^*$, namely a Radon measure $\nu$ and a positive function $h$ such that for some $\alpha \in \mathbb{R}$
\begin{equation}
\label{eigenelements}
\mc A h = \alpha h, \quad \mc A^* \nu = \alpha \nu, \quad \text{and} \quad  \langle \nu, h \rangle = 1.
\end{equation} 

Notably, specific assumptions on the growth and fragmentation rates that ensure existence and uniqueness of a positive leading eigenvalue and positive eigenfunctions have been obtained by Mischler \cite{M06}, Doumic and Gabriel \cite{DG10} and Mischler and Scher in \cite{MS16}. \\ Once the existence of positive eigenelements has been proved, several techniques can be used to prove the convergence. Cáceres at al. \cite{CCM11} used dissipation of entropy and entropy inequalities methods to establish the exponential convergence to an asymptotic profile for constant and linear growth rate. Perthame \cite{PERTHAME07} and Mischler et al. \cite{MMP05} established exponential convergence using the general relative entropy method. Mischler and Scher in \cite{MS16} provided a punctual survey on the spectral analysis of semigroups and they developed a splitting technique that allows them to formulate a Krein-Ruttman theorem and to establish exponential rate of convergence. The exponential rate of convergence is strongly related to the existence of a spectral gap. For example, results of this type can be found in Perthame and Ryzhik \cite{PR05}, Laurençot and Perthame \cite{LP09}, Cáceres at al. \cite{CCM11} and Mischler and Scher \cite{MS16}.

A more direct approach relying on the Mellin transform has been used by Doumic and Escobedo \cite{DE16} and Bertoin and Watson \cite{BW16} to analyse the so-called \textit{critical case}, in which the strategy outlined above cannot be applied as it is not possible to find a solution for the eigenvalue problem \eqref{eigenelements}. Indeed, in this case, even though one can find eigenelements for the growth-fragmentation operator, the integrability condition $\langle \nu, h \rangle = 1$ is not satisfied and the convergence \eqref{convergenceofthesemigroup} fails. More specifically, Doumic and Escobedo \cite{DE16} studied in depth the case in which the growth-rate is linear and the fragmentation kernel is homogeneous and Bertoin and Watson \cite{BW16} extended the analysis to the general self-similar case, which includes a broader range of growth and fragmentation rates. The linear case is also analysed in Section $6$ of \cite{BW18}. 

In this work, we go further in the analysis of the critical case. We consider a piecewise-linear growth-rate, namely
\begin{equation}
\label{growthrate}
c(x) =
\begin{cases}
a_{_-} x \quad \quad x < 1 \\
a_{_+} x \quad \quad x \geq 1,
\end{cases}
\end{equation}
with $0 < a_{_+} < a_{_-}$ and we investigate whether \eqref{convergenceofthesemigroup} may occur provided that $a_{_+}$ and $a_{_-}$ are suitably chosen. 
The motivations behind this choice are not only related to its interest from a mathematical point of view, but also to its interest in the applications. In words, small particles (i.e. with size smaller than $1$) grow at a faster rate than the larger ones. 

We follow the probabilistic approach of Bertoin and Watson in \cite{BW18}, which circumvents the spectral theory of semigroups.
Their approach relies on a Feynman-Kac representation of the semigroup $(T_t)_{t \geq 0}$ in terms of an instrumental Markov process $X=(X_t)_{t \geq 0}$, whose infinitesimal generator $\mc G$ is closely related to the growth-fragmentation operator $\mc A$.
to ensure the convergence of $e^{-\lambda t} T_t$ and to give an expression for the asymptotic profile $\nu$ . Moreover, Bertoin in \cite{BERT18} further shows that such condition is also necessary for the Malthusian behaviour. \\

When $c$ is linear (see Section $6$ of \cite{BW18}), $X$ is the exponential of a Lévy process. In the present setting, the key point that enables us to make the general approach more detailed is that the process $(X_t)_{t \geq 0}$ is the exponential of a so-called \textit{refracted Lévy process} $(\xi_t)_{t \geq 0}$. Heuristically, the refracted Lévy process that we take into consideration has the same jumps as a usual Lévy process but its drift changes depending on whether the process is positive or negative. We refer to Section $2$ for a rigorous definition of refracted Lévy processes. A more general introduction about the topic as well as a comprehensive analysis of their occupation times was given by Kyprianou et al. in \cite{KPP14}. Important references are also \cite{KL10} and \cite{RENAUD14}. \\
The first contribution of the present work is to provide explicit criteria in terms of the coefficients $\rho$ and $a_{\pm}$ to ensure exponentially fast convergence towards the asymptotic profile. 
\begin{thm}
\label{Teorema1}
Assume that \eqref{conditiononrhonecessary} and \eqref{growthrate} hold and let 
\begin{equation}
\lambda = \int_0^1 (1-s) \rho(s) ds.
\end{equation}
\begin{enumerate}
\item 
If 
\begin{equation}
\label{existencehyp}
a_{_+} < - \int_0^1 \log (s) \;  \rho(s) ds < a_{_-}
\end{equation}
then, there exist a probability measure $\nu(dx)$ and $\epsilon > 0$ such that for every continuous function $f$ with compact support and for every $x > 0$,
\begin{equation}
e^{- \lambda t} T_t f (x) = \langle \nu, f \rangle + o( e^{- \epsilon t}) \quad \quad \text{as} \;  t \to \infty.
\end{equation}
\item If one of the two inequalities in \eqref{existencehyp} does not hold, then the Malthusian behaviour \eqref{convergenceofthesemigroup} fails.
\end{enumerate}
\end{thm}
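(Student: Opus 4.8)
The plan is to reduce the whole statement to the long-time behaviour of an explicit instrumental Markov process, in the spirit of the probabilistic approach of \cite{BW18}. First I would observe that the growth-fragmentation operator \eqref{gfoperator} splits as $\mc A = \mc G_0 + \lambda\,\mathrm{Id}$, where $\mc G_0 f(x) = c(x)f'(x) + \int_0^1\big(f(sx)-f(x)\big)\rho(s)\,ds$ is the generator of a conservative piecewise-deterministic Markov process (PDMP) $(\mc X_t)_{t\ge0}$ on $\zeroinf$ (finite jump rate $\int_0^1\rho(s)\,ds<\infty$ by \eqref{conditiononrhonecessary}, deterministic flow $\dot z = c(z)$ between jumps), and $\lambda=\int_0^1(1-s)\rho(s)\,ds$ is precisely the constant for which $h\equiv1$ solves $\mc A h=\lambda h$. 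The bounded perturbation theorem, combined with the representation of $T_t$ recalled in Section~2, then gives the exact identity $e^{-\lambda t}T_tf(x)=\E_x[f(\mc X_t)]$. Moreover $\mc X_t=\exp(\zeta_t)$ where $\zeta$ is the refracted spectrally negative Lévy process with drift $a_+$ on $\zeroinf$, drift $a_-$ on $(-\infty,0)$, and Lévy measure the image of $\rho(s)\,ds$ under $s\mapsto\log s$; its mean drift in each regime is $\psi_\pm'(0+)=a_\pm+\int_0^1\log(s)\rho(s)\,ds=a_\pm-m$, with $m:=-\int_0^1\log(s)\rho(s)\,ds\in\zeroinf$ (finite by \eqref{conditiononrhonecessary}). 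Thus condition \eqref{existencehyp} says exactly that $\zeta$ has strictly negative mean drift above the refraction level $0$ and strictly positive mean drift below it --- $\zeta$ is mean-reverting towards $0$.

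For part~1, under \eqref{existencehyp} I would prove that $\zeta$, hence $\mc X$, is geometrically ergodic by a Foster--Lyapunov argument. Take $V(y)=\cosh(\theta y)$ with $\theta\in(0,\epsilon)$ small enough that $\int_0^1 s^{-\theta}\rho(s)\,ds<\infty$; since $\psi_\pm(0)=0$, $\psi_\pm$ are convex, $\psi_+(\theta)<\infty$ for all $\theta\ge0$ (spectral negativity) and $\psi_-(-\theta)<\infty$ by the moment bound just used, and $\psi_\pm'(0+)=a_\pm-m$ have the correct signs, $\theta$ can be chosen so that $\psi_+(\theta)<0$ and $\psi_-(-\theta)<0$. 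A computation of the extended generator of $\zeta$ on $V$ then yields a drift inequality $\mc G^\zeta V\le -\gamma V+b\,\mathbf 1_C$ on a compact $C$; compact sets are petite for $\mc X$ because after one jump-and-flow step its transition law has an absolutely continuous component (the jump ratio has density $\rho/\!\int_0^1\rho$). By the Meyn--Tweedie theory this gives a unique invariant probability $\pi$ for $\zeta$ and $|\E_y[g(\zeta_t)]-\pi(g)|\le CV(y)e^{-\gamma t}$ for all $|g|\le V$. Taking $\nu$ to be the image of $\pi$ under $y\mapsto e^y$ --- a probability measure on $\zeroinf$, for which one can moreover write an explicit expression by solving the stationary equation for $\mc G_0$, equivalently from the stationary law of $\zeta$ via its scale functions as in \cite{KPP14} --- and applying the bound to $g(y)=f(e^y)$ (bounded whenever $f$ is continuous with compact support) yields $e^{-\lambda t}T_tf(x)=\langle\nu,f\rangle+O(e^{-\gamma t})$, which is the claim for any $\epsilon<\gamma$. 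That $\lambda$ is indeed the right (Malthus) exponent, no larger one appearing, follows from $e^{-\lambda t}T_tf(x)\to\langle\nu,f\rangle>0$.

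For part~2, if \eqref{existencehyp} fails then one of the mean drifts $a_\pm-m$ has the wrong sign and $\zeta$ loses positive recurrence: it is transient to $+\infty$ if $m<a_+$ (then also $m<a_-$), transient to $-\infty$ if $m>a_-$, and in the boundary cases $m=a_+$ or $m=a_-$ it is merely null recurrent, since the excursions of $\zeta$ above, respectively below, $0$ then have infinite mean length. In every case the mass escapes to the boundary: for $f$ continuous with compact support, $e^{-\lambda t}T_tf(x)=\E_x[f(\mc X_t)]\to0$. Moreover the decay is never purely exponential --- a sharp large-deviation (Bahadur--Rao/local-limit) estimate for $\zeta$, available because $\zeta_t$ has finite exponential moments on a neighbourhood of $0$ (spectral negativity and \eqref{conditiononrhonecessary}), gives $e^{-\lambda t}T_tf(x)\sim C(x)\langle D,f\rangle\,t^{-1/2}e^{-\kappa t}$ with $\kappa=-\min\psi\ge0$ ($\kappa>0$ in the transient case, $\kappa=0$ in the null-recurrent one). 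Consequently no $\alpha\in\mb R$ makes $e^{-\alpha t}T_tf(x)$ converge to a nonzero multiple of $\langle\nu,f\rangle$: above the critical value $\lambda-\kappa$ the limit is $0$ (forcing $\nu=0$), below it the expression diverges, and at $\lambda-\kappa$ the prefactor $t^{-1/2}$ again forces the limit to be $0$; so \eqref{convergenceofthesemigroup} cannot hold. Equivalently, this says that the Malthusian criterion $L_{x,x}(\lambda)=1$ with $-L'_{x,x}(\lambda)<\infty$ of \cite{BERT18} breaks down.

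The step I expect to be the main obstacle is the second half of part~2 --- excluding \emph{every} exponential renormalization rather than just the natural one. The soft escape-of-mass argument only gives $e^{-\lambda t}T_tf(x)\to0$; to rule out convergence after some other exponential rescaling one genuinely needs the exact $t^{-1/2}e^{-\kappa t}$ asymptotics, for the refracted (not plain) Lévy process $\zeta$ and uniformly across the transient and the boundary null-recurrent regimes, and this is where the fine theory of refracted spectrally negative Lévy processes --- their Laplace exponents $\psi_\pm$, inverse functions $\Phi_\pm$, scale functions, and occupation-time identities (\cite{KPP14}, \cite{KL10}, \cite{RENAUD14}) --- becomes essential; alternatively one invokes the characterization of the Malthusian behaviour of \cite{BERT18} once $L_{x,x}$ has been computed through $\zeta$. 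In part~1 the corresponding technical point is the verification that compact sets are petite for the PDMP $\mc X$, which is what turns the (easy) Lyapunov drift inequality into a genuine geometric convergence rate.
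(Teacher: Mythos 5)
Your part 1 is correct, but it takes a genuinely different route from the paper. Your starting observation that $\mc A=\mc G_0+\lambda\,\mathrm{Id}$, with $\mc G_0$ the generator of a conservative piecewise-deterministic process $\mc X=e^{\zeta}$, gives directly the identity $e^{-\lambda t}T_tf(x)=\E_x[f(\mc X_t)]$; in the paper this same identity (formula \eqref{formulasemigroupandy}) is obtained indirectly, via the Feynman--Kac representation of $T_t$ through the auxiliary process $X=e^{\xi}$ and a change of measure by the martingale $\mc M'$ of \eqref{martingalemprime}, and your $\zeta$ is exactly the tilted process $\eta$ of Lemma \ref{lemmaY} (same drifts $a_{_\pm}$, Lévy measure $e^{z}\rho(e^{z})dz$, so $\tilde{\Psi}_{_\pm}'(0)=a_{_\pm}+\int_0^1\log(s)\rho(s)ds$ as you say). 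For the convergence you then invoke Foster--Lyapunov/Meyn--Tweedie with $V=\cosh(\theta y)$, $\tilde{\Psi}_{_+}(\theta)<0$, $\tilde{\Psi}_{_-}(-\theta)<0$, plus petiteness of compacts, whereas the paper computes the return-time transform $L(q)$ explicitly via excursion/subordinator identities (Lemma \ref{LemmaL}), identifies $\lambda$ (Lemma \ref{lemmamalthusexponent}), and deduces exponential recurrence from $L(\lambda-\delta)<\infty$ together with Kendall's renewal theorem. Both are legitimate: your route avoids any explicit computation of $L$ but must verify the drift inequality (fine, since $\int_0^1 s^{-\theta}\rho(s)ds<\infty$ for small $\theta$ controls the unbounded negative jumps) and the petite-set/irreducibility step; the paper's explicit $L$ is in any case indispensable later, since the normalisation of the asymptotic profile is $-L'(\lambda)$.

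In part 2 your primary argument has a genuine gap. The claimed sharp asymptotics $e^{-\lambda t}T_tf(x)\sim C(x)\langle D,f\rangle\,t^{-1/2}e^{-\kappa t}$, uniformly over the transient and null-recurrent regimes, is asserted rather than proved, and it is not clearly true under \eqref{conditiononrhonecessary} alone: the exponents $\tilde{\Psi}_{_\pm}$ are finite only on a half-line of the form $[-\epsilon',\infty)$, so the Cramér/Legendre minimisation underlying a Bahadur--Rao or local-limit estimate may sit at the boundary of the domain, where the $t^{-1/2}$ prefactor is no longer the correct one, and no off-the-shelf local limit theorem of this precision exists for refracted Lévy processes. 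The soft escape-of-mass argument only yields $e^{-\lambda t}T_tf\to0$, which, as you yourself note, does not exclude Malthusian behaviour under a different exponential rescaling. Your fallback, however, is exactly the paper's proof: necessity is delegated to \cite[Theorem 1.1]{BERT18}, and its criterion can be checked through the tilted process, since by optional stopping of $\mc M'$ one has $L_{x,x}(q)=\tilde{\E}_x\left[e^{(\lambda-q)H_Y(x)},\,H_Y(x)<\infty\right]$; your transience/null-recurrence classification of $\zeta$ then gives precisely $L_{x,x}(\lambda)<1$ in the transient cases and $-L_{x,x}'(\lambda)=\tilde{\E}_x[H_Y(x)]=\infty$ in the boundary cases, so the criterion fails. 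Make that fallback the actual argument (and drop, or genuinely prove, the local-limit claim), and part 2 is complete.
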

The second contribution of this work concerns the asymptotic profile $\nu$ of the solutions. Usually, even when it is possible to establish  \eqref{convergenceofthesemigroup}, it is difficult to find an explicit expression for the asymptotic profile.  In \cite{BCG13}, the authors provided fine estimates on the principal eigenfunctions, giving their first order behaviour close to $0$, and $+ \infty$ (see also \cite{BW16}).\\
In this work, we have been able to characterize the asymptotic profile in a fairly explicit way using special properties of refracted Lévy processes. 

The main result is that the asymptotic profile $\nu(dx)$ is absolutely continuous with respect to the Lebesgue measure and it is possible to characterize the behaviour of its density, which we denote $\nu(x)$, for $x > 0$. \\
In particular, assuming that \eqref{existencehyp} holds, 
\begin{equation}
\nu(x) = \frac{c_1}{a_{_+}} x^{\tiny{- \left( 1 + \bpl \right)}}, \quad \quad x \geq 1,
\end{equation}
where $ \beta_{_+}$ and $c_1$ are positive parameters that will be characterized later (respectively in \eqref{bplus} and \eqref{normalizingc}). \\
For $x < 1$, it is not always possible to obtain an explicit expression for the density $\nu(x)$. However, assuming that \eqref{existencehyp} and the so-called \textit{Cramér's condition} \eqref{Cramerscondition} hold, we show that
\begin{equation}
\nu(x) \sim c_2 x^{ - 1 + \bmin}, \quad \quad x \rightarrow 0,
\end{equation}
where $\bmin$ and $c_2$ are positive parameters that will be characterized later (respectively in \eqref{Cramerscondition} and \eqref{normalizingc2}).
In some cases, much more can be said about the density. As an example, this happens when the fragmentation kernel is given by $\rho(x) = x^{\gamma - 1}$, for some $\gamma \geq 1$. In this case, the density can be computed also for $x < 1$ and it holds
\begin{equation}
\nu(dx)= c_3 \left( \frac{1}{a_{_-}}  x^{-(1 - \bmin)} \mathbf{1}_{\{0 < x <1\}} +  \frac{1}{a_{_+}}  x^{-(1 + \bpl)} \mathbf{1}_{\{x \geq 1\}} \right) dx,
\end{equation}
where $c_3$ is a positive constant that will be characterized in Section $5$.

The rest of the article is organised as follows. 

In Section \ref{Section2} we analyse the process $X$ and its relation with the growth-fragmentation operator $\mc A$ and the growth-fragmentation semigroup $T$. We introduce the class of refracted Lévy processes and we give an explicit representation of $X$ as the exponential of a refracted Lévy process $\xi$. 

Section \ref{Section3} is devoted to the proof of Theorem \ref{Teorema1}. 

In Section \ref{Section4} we give an explicit expression for the asymptotic profile $\nu$. 

In Section \ref{section5} we present some examples, concerning particular choices of the fragmentation kernel.

\section{Preliminaries and general strategy}
\label{Section2}
Let $L^{\infty}$ be the Banach space of measurable and bounded functions $f: (0, \infty) \to \mathbb{R}$, endowed with the supremum norm $\norm{\cdot}_{\infty}$. It is also convenient to set $\underline{f}(x)= x^{-1} f(x)$ for every $f \in L^{\infty}$ and $x > 0$ and define $\underline{L}^{\infty} = \{ \underline{f} \; : \; f \in L^{\infty} \}$. 

The first result concerns the existence and uniqueness of a semigroup $(T_t)_{t \geq 0}$ whose infinitesimal generator coincides with $\mc A$. 

For the proof, we refer to the one of Lemma $2.1$ in Section $2$ of \cite{BW18}, which can be adapted to our framework even though $c$ is not continuous. More precisely, since
\begin{equation}
\label{sublinearityofc}
\norm{\underline{c}}_{\infty} \coloneqq \sup_{x > 0} c(x)/x < \infty.
\end{equation}
it is possible to apply \cite[Theorem 8.3.3]{EK86} and obtain the following lemma. 

\begin{lem}
Under the assumptions \eqref{conditiononrhonecessary} and \eqref{growthrate}, there exists a unique positive strongly continuous semigroup $(T_t)_{t \geq 0}$ on $L^{\infty}$ with infinitesimal generator $\mc A$. 
\end{lem}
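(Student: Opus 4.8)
\emph{Proof strategy.} The plan is to follow the proof of Lemma~2.1 in \cite{BW18} and to check that the only new feature of our setting — the jump of $c$ at $x=1$ — causes no difficulty, the one structural input being the sublinearity bound \eqref{sublinearityofc}. Concretely, I would decompose the operator \eqref{gfoperator} as $\mc A = \mc A_0 + \mc B$, where the pure transport part is $\mc A_0 f(x) = c(x)f'(x)$ and, using $\int_0^1 (f(sx) - s f(x))\,\rho(s)\,ds = \int_0^1 f(sx)\,\rho(s)\,ds - K f(x)$ with $K=\int_0^1 s\,\rho(s)\,ds$,
\begin{equation*}
\mc B f(x) = \int_0^1 f(sx)\,\rho(s)\,ds - K f(x).
\end{equation*}
Assumption \eqref{conditiononrhonecessary} gives in particular $\int_0^1 \rho(s)\,ds < \infty$, so $\mc B$ is a \emph{bounded} operator on $L^\infty$ with $\norm{\mc B} \le \int_0^1 \rho(s)\,ds + K$, and it is of positive type: $f \ge 0$ implies $(\mc B + K\,\mathrm{Id})f \ge 0$.

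For the transport part I would use that $c$, although discontinuous at $1$, is strictly positive and piecewise linear, so the flow $\phi_t(x)$ solving $\dot\phi_t(x)=c(\phi_t(x))$, $\phi_0(x)=x$, is globally defined for $t\ge 0$: a point started in $(0,1)$ grows at exponential rate $a_{_-}$, reaches $1$ in finite time, and grows at rate $a_{_+}$ thereafter, while a point started in $[1,\infty)$ simply grows at rate $a_{_+}$. The map $(t,x)\mapsto\phi_t(x)$ is jointly continuous, each $\phi_t$ is increasing, and \eqref{sublinearityofc} yields $\phi_t(x)\le x\,e^{\norm{\underline{c}}_\infty t}$, ruling out explosion. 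Hence the transport flow produces the explicit positive contraction semigroup $T^0_t f := f\circ\phi_t$; acting on $C_c^1$ its $t$-derivative at $0$ is $c\cdot f'$, and although $\phi_t$ is not smooth in $x$ at the crossing of the level $1$, it stays Lipschitz there, so the difference quotients $t^{-1}(T^0_t f - f)$ still converge to $\mc A_0 f$ uniformly on the (compact) support of $f\in C_c^1$.

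With these ingredients — $\mc A_0$ the generator attached to the transport flow, $\mc B$ a bounded perturbation of positive type, and \eqref{sublinearityofc} in force — one is in the situation covered by \cite[Theorem~8.3.3]{EK86}, which yields a strongly continuous positive semigroup $(T_t)_{t\ge 0}$ on $L^\infty$ with generator $\mc A$ (having the same domain as $\mc A_0$, since $\mc B$ is bounded). Positivity can alternatively be read off the Dyson--Phillips expansion $T_t=\sum_{n\ge 0} T^{(n)}_t$, each term being assembled from the positive operators $T^0_t$ and $\mc B + K\,\mathrm{Id}$ together with the scalar $e^{-Kt}$. Uniqueness is then automatic: a strongly continuous semigroup is determined by its generator, and since $C_c^1$ (equivalently $C_c^\infty\zeroinf$) is a core for $\mc A$, any semigroup whose generator extends $\mc A$ on that core must coincide with $(T_t)_{t\ge0}$.

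The only genuinely new point, and hence the main thing to verify carefully, is the second step: that the transport semigroup behaves exactly as in the continuous-$c$ case of \cite{BW18} despite the discontinuity of $c$ at $x=1$. This reduces to checking that the flow $\phi_t$ — and with it the difference quotients near the level $1$ — are well enough behaved for the hypotheses of \cite[Theorem~8.3.3]{EK86} to apply, which is ultimately guaranteed by the positivity of $c$ and the bound \eqref{sublinearityofc}.
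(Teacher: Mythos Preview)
Your proposal is correct and follows essentially the same approach as the paper: both adapt the proof of Lemma~2.1 in \cite{BW18}, invoking \cite[Theorem~8.3.3]{EK86} and observing that the sublinearity bound \eqref{sublinearityofc} is the crucial structural input that makes the argument go through despite the discontinuity of $c$ at $x=1$. You have simply spelled out more of the details (the decomposition $\mc A=\mc A_0+\mc B$, the explicit flow, positivity via Dyson--Phillips) that the paper leaves implicit in its reference to \cite{BW18}.
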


Following the probabilistic approach developed by Bertoin and Watson in \cite{BW18}, we provide a probabilistic representation of the main quantities of interest, such as the semigroup $T_t$, the Malthus exponent $\lambda$ and the asymptotic profile $\nu$, in terms of  $(X_t)_{t \geq 0}$. 
The analysis of the case of linear growth-rate (\textit{i.e.} $a_{_+}= a_{_-}=a$) and homogeneous fragmentation kernel made by Bertoin and Watson in Section $6$ of \cite{BW18} relies on the fact that the process $X$ is the exponential of a Lévy process. In the present setting, due to the piecewise linear growth-rate $c$, the process $X$ will be shown to be the exponential of a refracted Lévy process. 

Refracted Lévy processes arise by a simple perturbation of the paths of a Lévy process, which consists in subtracting off a fixed linear drift whenever the aggregate process is above a certain level. Whenever it exists, a refracted Lévy process $Z$ is described by the unique solution to the stochastic differential equation 
\begin{equation}\label{sderefracted}
dZ_t = - \delta \mathbf{1}_{ \{ Z_t > r \}} dt + d \tilde{Z}_t,
\end{equation}
where $\tilde{Z}= (\tilde{Z}_t, t \geq 0)$ is a Lévy process with only negative jumps and $\delta > 0$ is such that the resulting process $Z$ visits the half line $(r, +\infty)$ with positive probability. The generator of a refracted Lévy process can be expressed as
\begin{equation}
\label{generatorefractedlevy}
\mc G_Z f(x) =  \mc G_{\tilde{Z}} f(x) - \delta f'(x) \mathbf{1}_{[r, \infty)}(x).
\end{equation}

A comprehensive introduction to refracted Lévy processes as well as a complete analysis of their existence can be found in \cite{KL10}, and an analysis of their occupation times was made by Kyprianou et al. in \cite{KPP14}.   

We introduce the operator 
\begin{equation}
\mathcal{G} f(x)  = 
\begin{cases}
a_{_-} x f'(x) + \int_0^1 (f(sx)-f(x)) s \rho(s) ds   \quad 	\quad x <1  \\
a_{_+} x f'(x) + \int_0^1 (f(sx)-f(x)) s \rho(s) ds  \quad   \quad   x \geq 1.
\end{cases}
\end{equation}
Comparing $\mc G$ with \eqref{generatorefractedlevy} and with the expression of the generator of a Lévy process (see for example p. $24$ of \cite{BLP}), it is straightforward that there exists a refracted Lévy process $(\xi_t)_{t \geq 0}$ such that $\mathcal{G}$ is the generator of the Markov process 
\begin{equation}
\left(X_t\right)_{t \geq 0} \coloneqq \left(e^{\xi_t}\right)_{t \geq 0}.
\end{equation}
We denote by $\p_x$ and $\E_x$ the law and the corresponding expectation of $X$ starting from $x > 0$. 
To describe $\xi$, we consider a Lévy process $\xi^-$ composed of a compound Poisson process with only negative jumps plus a linear drift with rate $a_{_-}>0$, with Lévy measure given by
\begin{equation}
 \Pi(dz)= e^{2z} \rho (e^z) \; dz, \quad \quad z<0.
 \end{equation}
Since  $\xi^-$ is a spectrally negative Lévy process, we can define its Laplace exponent $\Psi_{_-}  : [0, \infty) \to \mathbb{R}$ by
\begin{equation}
\label{spectrallynegativelevyprocesses}
\E\left[\exp \{q \xi^-_t \} \right] = \exp \{t \Psi_{_-}(q)\}, \quad \quad t, q \geq 0.
\end{equation}
The Lévy-Khintchine formula gives
\begin{equation}
\label{psim}
 \Psi_{_-}(q) = a_{_-}q + \int_0^1 (s^q -1) s \rho (s) ds. 
\end{equation}
Then, $\xi$ is the solution to the stochastic differential equation
\begin{equation}
\label{xixixi}
d\xi_t = - (a_{_-} - a_{_+}) \mathbf{1}_{ \{ \xi_t > 0 \}} dt + d \xi^-_t.
\end{equation}
\begin{figure}
\centering
\begin{tikzpicture}
\begin{axis}[
    axis lines = left,
    xlabel = $t$,
    ylabel = {$\xi(t)$},
    axis x line=center,
    legend pos=south east
]
\addplot [
    domain=0:1.2, 
    samples=100, 
    color=purple,
]
{2.5*x};
\addlegendentry{$\xi^+$}
\addplot [
    domain=2.1:3.6, 
    samples=100, 
    color=blue,
    ]
    {5*x - 20};
\addlegendentry{$\xi^-$}

\addplot [purple, mark size= 1pt, only marks, mark = *,mark options=solid] coordinates{(0,0)};
\addplot [
    domain=1.2:2.1, 
    samples=100, 
    color=purple,
    ]
    {2.5*x - 1.5};
 \addplot [purple, mark size= 1pt, only marks, mark = *,mark options=solid] coordinates{(1.2,2.5*1.2 - 1.5)};

\addplot [blue, mark size= 1pt, only marks, mark = *,mark options=solid] coordinates{(2.1,5*2.1-20)};
    \addplot [
    domain=3.6:4.5, 
    samples=100, 
    color=blue,
    ]
    {5*x - 23};
\addplot [blue, mark size= 1pt, only marks, mark = *,mark options=solid] coordinates{(3.6,5*3.6 - 23)};
\addplot [
    domain=4.5:6, 
    samples=100, 
    color=blue,
    ]
    {5*x - 5*6};
\addplot [blue, mark size= 1pt, only marks, mark = *,mark options=solid] coordinates{(4.5,5*4.5 - 30)};
    \addplot [
    domain=6:7, 
    samples=100, 
    color=purple,
    ]
    {2.5*x - 2.5*6};
 \addplot [purple, mark size= 1pt, only marks, mark = *,mark options=solid] coordinates{(6,0)};
    \addplot [
    domain=7:8.5, 
    samples=100, 
    color=purple,
    ]
    {2.5*x - 2.5*6.5};
 \addplot [purple, mark size= 1pt, only marks, mark = *,mark options=solid] coordinates{(7,2.5*7 - 2.5*6.5)};
 \end{axis}
\end{tikzpicture}

\caption{Example of a trajectory of the process $\xi$.}
\label{plot:pathxi}
\end{figure}
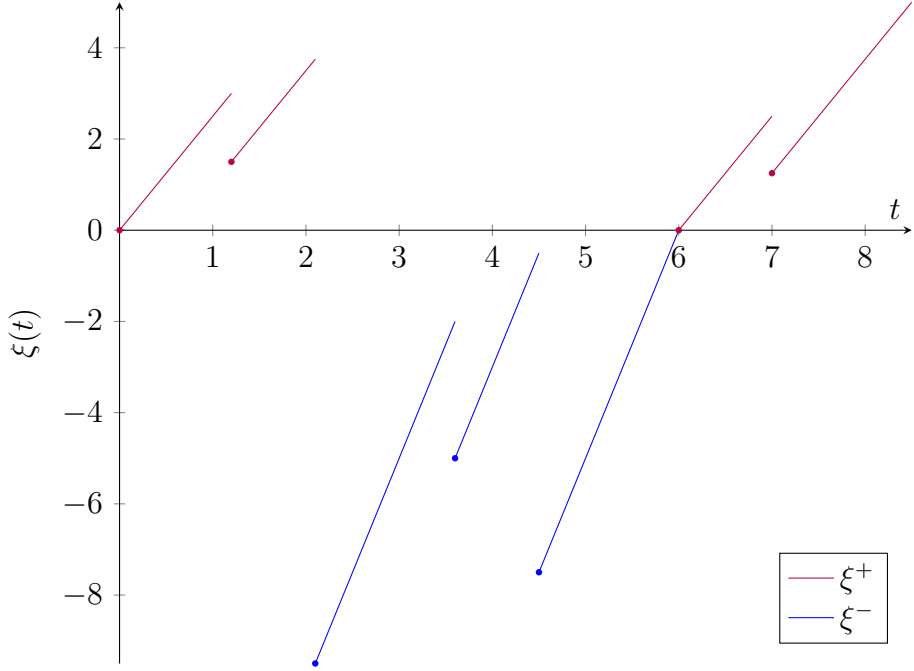
Similarly, we denote $\xi^+_t$ the Lévy process composed of a compound Poisson process having the same Lévy measure as $\xi^-_t$ plus a linear drift with rate $a_{_+}>0$. Its Laplace exponent is
\begin{equation}
\label{psip}
\Psi_{_+}(q)=  a_{_+}q + \int_0^1 (s^q -1) s \rho (s) ds.
\end{equation}
The evolution of $(\xi_t)_{t \geq 0}$ is the following: for any $t \geq 0$, if  $\xi_t \geq 0$, it evolves according to the law of $\xi^+_t$ and, if $\xi_t < 0$, it evolves according to the law of $\xi^-_t$. Since the process has bounded variation, the set $(- \infty, 0)$ is irregular for the process started at $0$, which means that $\xi$ spends a positive amount of time in $(0, \infty)$ before the first jump below $0$, when it changes the drift (see Figure \ref{plot:pathxi}). As a result, the excursions away from $0$ are simpler in this case than in the more general case of \eqref{sderefracted}. Excluding the degenerate case $\rho \equiv 0$, the process $\xi$ (and consequently $X$) is irreducible, which means that, for every $x >0$, the probability that the process starting from $x$ reaches a given point $y>0$ is strictly positive. 

Note that condition \eqref{conditiononrhonecessary} implies that
\begin{equation}
\int_{(-\infty, 0)} e^{-(1+\epsilon)x} \; \Pi(dx) = \int_{(-\infty, 0)} e^{(1-\epsilon)x} \rho(e^x) dx = \int_0^1 s^{-\epsilon} \rho(s) ds < + \infty.
\end{equation}
By Theorem $3.6$ in \cite{KYPR14}, this ensures that the Laplace exponents $\Psi_{_+}$ and $\Psi_{_-}$ are defined in the interval $[-1-\epsilon, \infty)$ and differentiable in $(-1-\epsilon, \infty)$. Their derivatives are given by
\begin{equation}
\label{psider}
\Psi'_{\pm}(q)= a_{_\pm}  +  \int_0^1 s^{q+1}  \log (s) \; \rho (s) ds.
\end{equation}

It is well known that if $\Psi'_{_-}(0) >0$, then $\xi^{-}$ drifts to $+\infty$, if $\Psi'_{_-}(0) =0$ then $\xi^{-}$ is recurrent and, if $\Psi'_{_-}(0) < 0$, then $\xi^{-}$ drifts to $- \infty$ and the same holds for $\xi^+$ in terms of $\Psi'_{_+}(0) $. 
A natural question that arises concerns the asymptotic behaviour of the refracted Lévy process $\xi$, if the behaviour of $\xi^+$ and $\xi^-$ is known. The following lemma answers to this question. Since the proof relies on the forthcoming Lemma \ref{lemmajointlaw}, we postpone it to Section \ref{Section3}.
\begin{lem}
\label{lemmaasymptoticbehaviourRLP}
Let $\xi^+$, $\xi^-$ and $\xi$ defined as above. Then the following hold:
\begin{enumerate}[label=(\roman*)]
\item \label{plusplus} If both $\xi^+$ and $\xi^-$ drift to $+\infty$, then $\xi$ drifts to $+\infty$. 
\item \label{minusminus} If both $\xi^+$ and $\xi^-$ drift to $-\infty$, then $\xi$ drifts to $-\infty$. 
\item \label{recplus} If $\xi^+$  is recurrent and $\xi^-$ drifts to $+\infty$, then $\xi$ is null recurrent.
\item  \label{minusrec}  If $\xi^+$  drifts to $-\infty$ and $\xi^-$ is recurrent, then $\xi$ is null recurrent.
\item \label{minusplus} If $\xi^+$  drifts to $-\infty$ and  $\xi^-$ drifts to $+\infty$, then $\xi$ is positive recurrent.
\end{enumerate}
\end{lem}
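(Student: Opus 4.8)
The plan is to decompose the trajectory of $\xi$ into independent identically distributed cycles delimited by its successive visits to the level $0$, each cycle consisting of an excursion above $0$ followed by an excursion below $0$. This is legitimate because $\xi$ has no positive jumps: while negative, $\xi$ drifts upward at the positive rate $a_{_-}$, so it can reach $0$ from below only by creeping and thus hits $0$ exactly every time it passes from $(-\infty,0)$ to $[0,\infty)$; while positive, it drifts upward at rate $a_{_+}>0$, so it can leave $[0,\infty)$ only through a (necessarily negative) jump. Hence, started from $0$, the process first performs an excursion distributed as $\xi^+$ killed at its first entrance time $\tau^-$ into $(-\infty,0)$; if $\tau^-<\infty$ it lands at some $-H<0$ and then performs an excursion distributed as $\xi^-$ started from $-H$ and killed at its first passage to $0$; and then, by the strong Markov property at the return to $0$, it restarts. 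I would use the following standard facts about a bounded-variation spectrally negative Lévy process $Y\in\{\xi^+,\xi^-\}$ with Laplace exponent $\Psi$ and positive drift (see \cite{KYPR14}): (a) its descending ladder height process is compound Poisson, so $\p(Y\text{ never goes below its starting level})>0$ if and only if $Y$ drifts to $+\infty$, that is $\Psi'(0^+)>0$; (b) the all-time supremum $\overline Y_\infty$ is exponential with parameter $\Phi(0)$, the largest root of $\Psi$, so $Y$ started from $-h$ reaches $0$ with probability $e^{-\Phi(0)h}$, equal to $1$ exactly when $Y$ is recurrent or drifts to $+\infty$, in which case moreover $\E_0[\tau^+_h]=h/\Psi'(0^+)\in(0,\infty]$; (c) embedding $Y$ at its jump times produces a random walk with mean step $\Psi'(0^+)/K$, so the first entrance time $\tau^-$ into $(-\infty,0)$ from $0$ (which happens at the walk's first strict descending ladder epoch) has $\E_0[\tau^-]=\infty$ when $Y$ is recurrent, by a Wald argument (if that epoch had finite mean the stopped walk would have mean $0$, contradicting its strict negativity), and $\E_0[\tau^-]<\infty$ when $Y$ drifts to $-\infty$. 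Finally \eqref{conditiononrhonecessary} forces $\int_0^1 |\log s|\, s\rho(s)\,ds<\infty$, so the jumps of $\xi^\pm$, and hence the overshoot $H$, have finite mean.

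As a preliminary I would note that only the five listed sign patterns can occur, since $\Psi'_{\pm}(0)=a_{_\pm}+\int_0^1 s\log(s)\rho(s)\,ds$ with $a_{_+}<a_{_-}$ gives $\Psi'_+(0)<\Psi'_-(0)$, so $\xi^+$ drifting to $+\infty$ forces the same for $\xi^-$, and $\xi^-$ being recurrent or drifting to $-\infty$ forces $\xi^+$ to drift to $-\infty$.

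Then I would run through the five cases. For \ref{plusplus}: by (a) a cycle completes its positive excursion with probability $1-p$ where $p=\p_0(\xi^+\text{ stays}\ge 0)\in(0,1)$, and by (b) (here $\Phi_-(0)=0$) every negative excursion that begins does terminate; so the number of completed cycles is geometric, almost surely finite, and afterwards $\xi$ coincides forever with a copy of $\xi^+$ that stays nonnegative, which still drifts to $+\infty$. Case \ref{minusminus} is the mirror image: every positive excursion terminates a.s.\ (since $\xi^+\to-\infty$), but a negative excursion fails to return to $0$ with probability $\E[1-e^{-\Phi_-(0)H}]>0$ by (b), so again a.s.\ finitely many cycles, after which $\xi$ coincides with a copy of $\xi^-$ that stays negative, still drifting to $-\infty$. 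In \ref{recplus}, \ref{minusrec} and \ref{minusplus}, both excursions terminate almost surely — positive ones because $\xi^+$ is recurrent (hence oscillates) or drifts to $-\infty$, negative ones by (b) because $\xi^-$ is recurrent or drifts to $+\infty$ — so every cycle is a.s.\ finite, $\xi$ returns to $0$ infinitely often, and being irreducible it is recurrent; it is positive recurrent exactly when the mean cycle length $\E_0[\tau^-]+\E[H]/\Psi'_-(0)$ is finite. Using (b)--(c): in \ref{recplus}, $\Psi'_+(0)=0$ gives $\E_0[\tau^-]=\infty$, hence $\xi$ is null recurrent; in \ref{minusrec}, $\Psi'_-(0)=0$ gives $\E[H]/\Psi'_-(0)=\infty$ (as $H>0$ a.s.), hence $\xi$ is null recurrent; in \ref{minusplus}, $\Psi'_+(0)<0$ gives $\E_0[\tau^-]<\infty$ and $\Psi'_-(0)>0$ together with $\E[H]<\infty$ gives $\E[H]/\Psi'_-(0)<\infty$, so the mean cycle length is finite and $\xi$ is positive recurrent.

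The real work, I expect, is the quantitative part — namely $\E_0[\tau^-]=\infty$ versus $\E_0[\tau^-]<\infty$ for the positive excursion, and $\E[H]<\infty$ — for which I would invoke the random-walk embedding and the Wald identities described above, or, more efficiently, the forthcoming Lemma~\ref{lemmajointlaw} on the joint law of the excursion data; the qualitative ingredients (a)--(b), deciding whether an excursion can have infinite length, are routine spectrally-negative fluctuation theory. A minor point to handle with care is that, when $\xi$ starts away from $0$, it does reach the relevant half-line (almost surely, or with positive probability, according to the case) so that the cyclic argument applies from then on; in \ref{plusplus} and \ref{minusminus} this is immediate since the initial segment behaves as $\xi^+$ or $\xi^-$.
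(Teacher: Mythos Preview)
Your proposal is correct and follows essentially the same strategy as the paper: decompose the return time to $0$ as the positive excursion time $J$ plus the subsequent negative excursion time $H-J$, show recurrence in cases (iii)--(v) by checking both pieces are a.s.\ finite, and decide positive versus null recurrence by whether $\E[J]$ and $\E[H-J]$ are finite, invoking Lemma~\ref{lemmajointlaw} (and the subordinator structure of first-passage times) for the quantitative estimates. The only notable difference is in (i)--(ii): you argue via a geometric number of completed cycles, whereas the paper uses the one-line pathwise comparison $d\xi = d\xi^+ + (a_{_-}-a_{_+})\mathbf{1}_{\{\xi_t<0\}}\,dt$ with $a_{_-}-a_{_+}>0$ to get $\xi\ge\xi^+$ directly (and symmetrically for (ii)); both are valid, the paper's is shorter.
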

\begin{oss}
Even if it is formulated in terms of $\xi$, we believe that the previous lemma also applies to Lévy processes with unbounded variation. 
\end{oss}

The following lemma provides a Feynman-Kac representation of the semigroup $(T_t)_{t \geq 0}$ in terms of the Markov process $X$. The proof follows adapting the one of \cite[Lemma 2.2]{BW18} to our setting. Define
\begin{equation}
\mathcal{E}_t \coloneqq  \exp \left( \int_0^t \underline{c}(X_s) ds  \right)=  \exp \left(  \int_0^t \left( a_{_+}\mathbf{1}_{\{\xi_s \geq 0\} } + a_{_-}\mathbf{1}_{\{\xi_s < 0\} } \right) ds  \right), \quad \quad t \geq 0.
\end{equation}

\begin{lem}
The growth-fragmentation semigroup $(T_t)_{t \geq 0}$ has a Feynman-Kac representation, which is
\begin{equation}
T_tf(x) = x \E_x \Bigg( \mathcal{E}_t \frac{f(X_t)}{X_t} \Bigg), \quad \quad x >0.
\end{equation}
\end{lem}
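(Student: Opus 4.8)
\emph{Proof proposal.} The plan is to set $S_tf(x):=x\,\E_x\!\big(\mathcal E_t\,\underline f(X_t)\big)$ for $f\in L^{\infty}$, $x>0$, and to prove that $(S_t)_{t\ge0}$ is a strongly continuous positive semigroup on $L^{\infty}$ whose infinitesimal generator is $\mathcal A$; the asserted identity $S_t=T_t$ then follows immediately from the uniqueness part of the previous lemma. This follows the scheme of \cite[Lemma 2.2]{BW18}, the only genuinely new point being that the discontinuity of $c$ at $x=1$ has to be shown to be harmless.

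First I would check boundedness and positivity of $S_t$. Positivity is obvious, and for boundedness I would use the piecewise-deterministic structure of $X$: between the epochs of a Poisson process of rate $r=\int_0^1 u\rho(u)\,du$ the process follows the flow $\dot x=c(x)$, and at each epoch it is multiplied by an independent $\alpha_i\in(0,1)$ with law $r^{-1}u\rho(u)\,du$, so that $X_t=x\,\mathcal E_t\prod_{i\le m_t}\alpha_i$, where $m_t$ is the number of jumps before time $t$. Hence $x\mathcal E_t/X_t=\prod_{i\le m_t}\alpha_i^{-1}$, and a standard compound-Poisson computation gives $\E_x(x\mathcal E_t/X_t)=e^{\lambda t}$ with $\lambda=\int_0^1(1-s)\rho(s)\,ds$, the moment $\E(\alpha^{-1})=r^{-1}\int_0^1\rho(s)\,ds$ being finite thanks to \eqref{conditiononrhonecessary}; therefore $\norm{S_tf}_\infty\le e^{\lambda t}\norm{f}_\infty$. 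For the semigroup property I would combine the Markov property of $X$ with the cocycle identity $\mathcal E_{t+s}=\mathcal E_t\,(\mathcal E_s\circ\theta_t)$, where $\theta_t$ is the time-shift, which gives $S_{t+s}f(x)=x\E_x\big(\mathcal E_t\,\underline{S_sf}(X_t)\big)=S_tS_sf(x)$, while $S_0=\mathrm{Id}$ is clear.

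The heart of the proof is the identification of the generator, and the key algebraic ingredient is the intertwining relation $\underline{\mathcal A f}=\mathcal G\underline f+\underline c\,\underline f$, obtained by writing $f(x)=x\underline f(x)$ and expanding the definitions of $\mathcal A$ and $\mathcal G$. For $f$ smooth with compact support in $(0,\infty)$ the function $\underline f$ is again smooth with compact support, so I would apply Dynkin's formula for $X$ to $\underline f$ and combine it, via integration by parts, with the absolutely continuous finite-variation process $\mathcal E$ to get
\[
\mathcal E_t\underline f(X_t)=\underline f(x)+\int_0^t\mathcal E_s\big(\mathcal G\underline f+\underline c\,\underline f\big)(X_s)\,ds+\int_0^t\mathcal E_s\,dM_s,
\]
with $M$ the square-integrable martingale part of $\underline f(X)$; since $\mathcal E$ is locally bounded the stochastic integral is a true mean-zero martingale, so taking $\E_x$, invoking the intertwining and multiplying by $x$ yields the integral equation
\[
S_tf(x)=f(x)+\int_0^tS_s(\mathcal Af)(x)\,ds .
\]
(Here one notes that $\mathcal Af$ is bounded and, for $f$ supported in $[a,b]\subset(0,\infty)$, vanishes on $(0,a)$, so $\underline{\mathcal Af}\in L^{\infty}$ and Fubini applies.) From this equation I would read off strong continuity of $(S_t)$ and that its infinitesimal generator agrees with $\mathcal A$ on the smooth compactly supported functions; since the latter form a core for $\mathcal A$ — which is precisely what underlies the previous lemma through \cite[Theorem 8.3.3]{EK86} — and a strongly continuous semigroup is determined by its generator, I conclude $S_t=T_t$.

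I expect the generator step to be the main obstacle. One must justify Dynkin's formula for the piecewise deterministic process $X$ applied to the multiplicative functional $\mathcal E_t\underline f(X_t)$ notwithstanding the jump of $c$ at $x=1$ — this is harmless precisely because $X$ has only negative jumps and hence reaches level $1$ only by creeping upward, spending zero Lebesgue time there — check that the integral against $M$ is a genuine martingale rather than merely a local one, and match exactly the domain on which $\mathcal A$ is shown to generate $(T_t)$ in the preceding lemma. The remaining bookkeeping parallels \cite{BW18}.
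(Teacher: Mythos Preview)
Your proposal is correct and is precisely the route the paper indicates: the paper does not spell out a proof but simply says that it ``follows adapting the one of \cite[Lemma 2.2]{BW18} to our setting,'' and what you have written is exactly that adaptation, including the observation that the discontinuity of $c$ at $x=1$ is harmless because $X$ only reaches $1$ by creeping from below. Your computation of the bound $\norm{S_tf}_\infty\le e^{\lambda t}\norm{f}_\infty$ via $x\mathcal E_t/X_t=\prod_{i\le m_t}\alpha_i^{-1}$ and the intertwining identity $\underline{\mathcal A f}=\mathcal G\underline f+\underline c\,\underline f$ are both correct and are the key ingredients of the BW18 argument.
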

We point out that such representation can be viewed as a "many-to-one formula" in the setting of branching particle systems. Informally, the mean behaviour of the whole system is described only in terms of the evolution of a single particle, often referred to as the "tagged fragment", that evolves according to the law of $(X_t)_{t \geq 0}$. 

In order to study the asymptotic behaviour of $T_t$ as $t \to \infty$, we need to understand the limiting behaviour of $\E_x \left[ \mathcal{E}_t f(X_t)/X_t \right]$ using ergodicity arguments. \\
For $y >0$, we introduce
\begin{equation}
H(y) \coloneqq \inf \{ t>0 \; : \; X_t=y \} = \inf \{ t>0 \; : \; \xi_t=\log y \}.
\end{equation}
Given $x,y > 0$, define
\begin{equation}
L_{x,y} (q) \coloneqq \E_x \left(e^{-qH(y)} \mc E_{H(y)}, \; H(y) < \infty  \right), \quad \quad q \in \mb R.
\end{equation}
The function $L_{x,y} : \mb R \to (0, \infty] $ is non-increasing and convex, with  $\lim_{q \to \infty} L_{x,y} (q) = 0$ and $\lim_{q \to - \infty} L_{x,y} (q) = \infty$ (see Section $3$ in \cite{BW18}).
\begin{defn}
\label{defmalthusexp}
Let $x_0 > 0$. We call 
\begin{equation}
\label{malthusexponent1}
\lambda \coloneqq \inf \{ q \in \mathbb{R} \; : \; L_{x_0, x_0}(q) < 1 \}.
\end{equation}
the \textit{Malthus exponent} of the growth-fragmentation operator $\mc A$.
\end{defn}
In \cite[Proposition 3.1]{BW18} it was proved that if there exists $q \in \mathbb{R}$ and $x_0 > 0$ with $L_{x_0, x_0}(q) < 1$, then $L_{x, x}(q) < 1$ for all $x >0$. 
Hence, we denote $H \coloneqq H(1)$ and 
\begin{equation}
\label{DefinitionofL}
L(q) \coloneqq L_{1,1} (q) = \E_1 \big(e^{-qH} \mc E_{H}, \; H < \infty  \big), \quad \quad q \in \mb R.
\end{equation}
The proof of Theorem \ref{Teorema1} begins with the explicit computation of the Laplace transform $L(q)$, which is stated in the forthcoming Lemma \ref{LemmaL}. This computation relies heavily on some properties of the excursions of spectrally negative Lévy processes and on the fact that the hitting time processes are subordinators (non decreasing Lévy processes). The main results used in the proof of Lemma \ref{LemmaL}  are summarized in Lemma \ref{lemmajointlaw}. A more extensive formulation can be found in \cite[Chapter 7]{BLP}. \\
Once $L(q)$ is computed, Lemma \ref{lemmamalthusexponent} yields the Malthus exponent $\lambda$. 
The existence of a solution $\lambda$ to $L(\lambda) =1$ will enable us to define in \eqref{martingalemprime}  a remarkable martingale multiplicative functional $\mc M'$ of $X$.
By probability tilting, $\mc M'$ yields another Markov process $(Y_t)_{t \geq 0}$ which enjoys a much simpler, but deeper, connection with the growth-fragmentation semigroup $(T_t)_{t \geq 0}$.  

More specifically, we introduce the new probability measure $\tilde{\p}_x$ such that, if $(\mc F_t)_{t \geq 0}$ is the natural filtration of $(X_t)_{t \geq 0}$, it holds that
\begin{equation}
\label{tiltedproba}
\tilde{\p}_x(A)= \E_x[\mathbf{1}_A \mc M'_t ], \quad \quad \forall A \in \mc F_t.
\end{equation}
Since $\p_x$ is a probability law on the space of càdlàg paths, the same holds for $\tilde{\p}_x$. Denoting $Y= (Y_t)_{t \geq 0}$ the process with distribution $\tilde{\p}_x$ \footnote{This means that its finite-dimensional distributions are given in the following way. Let $0 \leq t_1 < \dots < t_n \leq t$, and $F: \mathbb{R}^n \to \mathbb{R_+}$. Then, 
\begin{equation}
\tilde{\E}_x [F(Y_{t_1}, \dots , Y_{t_n} )] = \E_x[\mc M'_t F(X_{t_1}, \dots , X_{t_n} ) ], \quad \quad x > 0.
\end{equation}
},
we have 
\begin{equation}
\label{formulasemigroupandy}
e^{- \lambda t}  T_t f (x) = \tilde{\E}_x \big( f(Y_t) \big).
\end{equation} Condition \eqref{existencehyp} is the necessary and sufficient condition for $Y$ to be positive recurrent and then the classical ergodic theory for Markov processes readily leads to Theorem \ref{Teorema1}.\\
The proof of Theorem \ref{Teorema1} shows that the asymptotic profile $\nu$ is given by the stationary distribution of the process $Y$, which is characterised in Section \ref{Section4}. We see in Proposition \ref{lemmadensityoccupationmeasure} that such distribution is absolutely continuous with respect to the Lebesgue measure and we give an explicit formula for computing it. The fact that $Y$ is the exponential of a refracted Lévy process, enables us to provide an expression for the density in terms of the scale functions $\tilde{W}_{_+}$ and $\tilde{W}_{_-}$, which are functions that appear in several problems concerning spectrally negative Lévy processes and will be defined in Section \ref{Section4}. A further step consists in finding a more explicit expression for the density, since, in general, the scale functions are not known explicitly but only through their Laplace transform. This is done in Lemma \ref{lemmadensityetapositive} and Lemma \ref{lemmabehaviourdensitynear0}.

\section{Proof of Theorem 1.1}
\label{Section3}
We start by recalling some properties of spectrally negative Lévy processes with bounded variation, that will be essential in the proof.

We consider a spectrally negative Lévy process $Z$ with Laplace exponent
$$ \Psi_Z(q) = q \left( d - \int_0^{\infty} e^{- q z} \Pi_{Z} ( - \infty, -z ) dz \right),$$
where $d >0$ is the drift and $\Pi_Z$ is the Lévy measure. Let $\Phi_Z$ be the right inverse of $\Psi_Z$.
\begin{lem}
\label{lemmajointlaw}
Define $J= \inf\{ t \geq 0 \; : \; Z_t \in (- \infty, 0)\}$.  Then the following hold.
\begin{enumerate}[label=(\roman*)]
\item $\p (J = \infty) = 0$ if $\Psi'_Z(0+) \leq 0$ and $\p (J = \infty) = \Psi'_Z(0+)/ d $ otherwise. 
\item Let 
\begin{equation}
b =\begin{cases} d & \mbox{if } \Psi'_Z(0+) \leq 0
\\ d-  \Psi'_Z(0+)  & \mbox{otherwise.}
 \end{cases}
\end{equation}
The joint law of the triplet $(Z_{J-}, Z_J, J)$ is determined on $\{0 \leq x \leq -y \} \times [0, \infty)$ by 
\begin{align}
\p ( Z_{J-} \in dx, Z_{J} - Z_{J-} \in dy, J \in dt \; | \; J < \infty ) = b^{-1} \exp \{ - \Phi_Z (0) x \} dx \Pi_Z(dy) \\ \times  \p (J \in dt \; | \; Z_{J-} = x),
\end{align}
where the conditional law of $J$ that appears above is the one of a subordinator with characteristic exponent $\Phi^{\natural}(q) = \Phi_Z (q) - \Phi_Z (0)$ at time $x$.  This means that
\begin{equation}
\E \left[ \exp\{-qJ\} \; \big| \; Z_{J-} = x  \right] = \exp \{ -x \Phi^{\natural}(q) \},
\end{equation}
for every $q$ for which $\Phi^{\natural}(q)$ is defined.
\end{enumerate}
\end{lem}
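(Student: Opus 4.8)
The plan is to condition on the jump by which $Z$ enters $(-\infty,0)$ and reduce everything to a known potential formula for spectrally negative Lévy processes. I take $Z_0=0$ (implicit in the statement — the factor $e^{-\Phi_Z(0)x}$ in the conclusion is the value at the starting point of the relevant scale function). Since $Z$ has bounded variation, is spectrally negative and has strictly positive drift $d$, the half-line $(-\infty,0)$ is irregular for $0$; hence $J>0$ a.s., $Z$ creeps upwards, and $Z$ can enter $(-\infty,0)$ only by a negative jump. In particular, on $\{J<\infty\}$ the time $J$ is a.s.\ a jump time, with $0\le Z_{J-}$, $Z_J<0$ and $Z_{J-}\le Z_{J-}-Z_J$. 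I would then compute, for $q\ge 0$ and nonnegative test functions $g$ on $[0,\infty)$ and $h$ on $(-\infty,0)$, the quantity $\E\big[g(Z_{J-})\,h(Z_J-Z_{J-})\,e^{-qJ};\,J<\infty\big]$, from which both assertions follow by specialisation.

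By the compensation formula for the Poisson point process of jumps of $Z$, applied to the single jump at time $J$ — the integrand $g(Z_{s-})h(y)e^{-qs}\mathbf{1}_{\{s\le J\}}\mathbf{1}_{\{y<-Z_{s-}\}}$ is predictable because $\{s\le J\}=\{\inf_{u<s}Z_u\ge 0\}$ is determined by $(Z_u)_{u<s}$, and on $\{s\le J\}$ the only jump taking $Z$ below $0$ is the one at $s=J$ — one obtains
\[
\E\big[g(Z_{J-})h(Z_J-Z_{J-})e^{-qJ};J<\infty\big]=\int_0^\infty g(x)\Big(\int_{(-\infty,-x)}h(y)\,\Pi_Z(dy)\Big)\,U^{(q)}(dx),
\]
where $U^{(q)}(dx)=\E\big[\int_0^{J}e^{-qs}\mathbf{1}_{\{Z_s\in dx\}}\,ds\big]$ is the $q$-potential of $Z$ started at $0$ and killed at $J=\inf\{t:Z_t<0\}$. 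The classical resolvent formula for spectrally negative Lévy processes killed on exiting $[0,\infty)$ downwards (see \cite[Chapter~7]{BLP} or \cite{KYPR14}) identifies the density of $U^{(q)}$ as $W^{(q)}(0)\,e^{-\Phi_Z(q)x}$ for $x\ge 0$, and $W^{(q)}(0+)=1/d$ since $Z$ has bounded variation with drift $d$. Hence, for every $q\ge 0$,
\[
\E\big[g(Z_{J-})h(Z_J-Z_{J-})e^{-qJ};J<\infty\big]=\frac{1}{d}\int_0^\infty g(x)\,e^{-\Phi_Z(q)x}\Big(\int_{(-\infty,-x)}h(y)\,\Pi_Z(dy)\Big)dx.
\]

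Part (i) comes out by taking $g\equiv h\equiv 1$ and letting $q\downarrow 0$: $\p(J<\infty)=\frac{1}{d}\int_0^\infty e^{-\Phi_Z(0)x}\,\Pi_Z(-\infty,-x)\,dx$. Since $\Psi_Z(\theta)/\theta=d-\int_0^\infty e^{-\theta z}\Pi_Z(-\infty,-z)\,dz$, one finds $\int_0^\infty e^{-\Phi_Z(0)z}\Pi_Z(-\infty,-z)\,dz=d$ when $\Phi_Z(0)>0$ (i.e.\ $\Psi_Z'(0+)<0$), using $\Psi_Z(\Phi_Z(0))=0$, and $\int_0^\infty \Pi_Z(-\infty,-z)\,dz=d-\Psi_Z'(0+)$ when $\Phi_Z(0)=0$ (i.e.\ $\Psi_Z'(0+)\ge 0$), by letting $\theta\downarrow 0$. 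This gives $\p(J<\infty)=1$ if $\Psi_Z'(0+)\le 0$ and $\p(J<\infty)=1-\Psi_Z'(0+)/d$ otherwise, which is (i) and which also shows $d\,\p(J<\infty)=b$, with $b$ as in the statement.

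For part (ii) I would write $e^{-\Phi_Z(q)x}=e^{-\Phi_Z(0)x}\,e^{-x\Phi^{\natural}(q)}$ with $\Phi^{\natural}=\Phi_Z-\Phi_Z(0)$. Here $\Phi^{\natural}$ is the Laplace exponent of a genuine subordinator $\sigma$: indeed $\Phi_Z$ governs the upward first-passage process $T_x:=\inf\{t:Z_t>x\}$ through $\E[e^{-qT_x};T_x<\infty]=e^{-x\Phi_Z(q)}$, so $\sigma$ is the honest version of $(T_x)$ obtained by removing the killing rate $\Phi_Z(0)$, and $e^{-x\Phi^{\natural}(q)}=\int_{[0,\infty)}e^{-qt}\,\p(\sigma_x\in dt)$. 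Substituting this into the last display exhibits the Laplace transform in the time variable of the sub-probability law of $(Z_{J-},Z_J-Z_{J-},J)$ on $\{J<\infty\}$; since a finite measure on $[0,\infty)$ is determined by its Laplace transform, this sub-law must equal $\frac{1}{d}e^{-\Phi_Z(0)x}\mathbf{1}_{\{0\le x\le -y\}}\,dx\,\Pi_Z(dy)\,\p(\sigma_x\in dt)$, and dividing by $\p(J<\infty)=b/d$ yields exactly the conditional law in (ii) — in particular, conditionally on $\{Z_{J-}=x\}$, $J$ is distributed as the subordinator $\sigma$ of characteristic exponent $\Phi^{\natural}$ run for time $x$. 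The main obstacle is not any single computation but rather pinning down the structural facts: that on $\{J<\infty\}$ the passage below $0$ always occurs by a jump, that the integrand fed to the compensation formula is predictable, and that the killed resolvent at the starting point $0$ has the stated form with $W^{(q)}(0+)=1/d$; the rest is bookkeeping with the Lévy--Khintchine formula and the definition of $\Phi_Z$.
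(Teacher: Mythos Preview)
Your argument is correct. The paper does not really prove this lemma: it simply cites Theorem~17 and Exercise~3 of \cite[Chapter~7]{BLP} for each assertion. You instead give a self-contained derivation, and the route you take --- compensation formula for the jump at time $J$, combined with the explicit $q$-resolvent density $u^{(q)}(0,x)=W^{(q)}(0)e^{-\Phi_Z(q)x}=\tfrac{1}{d}e^{-\Phi_Z(q)x}$ of $Z$ killed on first exit below $0$ --- is essentially the computation underlying the cited result. The check that $d\,\p(J<\infty)=b$ via the Lévy--Khintchine representation $\Psi_Z(\theta)/\theta=d-\int_0^\infty e^{-\theta z}\Pi_Z(-\infty,-z)\,dz$ is clean, and the identification of $\Phi^{\natural}=\Phi_Z-\Phi_Z(0)$ as the Laplace exponent of the unkilled first-passage subordinator is the right way to read off the conditional law of $J$ given $Z_{J-}=x$. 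The predictability of $\mathbf{1}_{\{s\le J\}}$ is justified as you say, since $\{s\le J\}=\{\inf_{u<s}Z_u\ge 0\}\in\mathcal{F}_{s-}$, and the fact that passage below $0$ occurs only by a jump is guaranteed by the bounded-variation/positive-drift assumption. In short, where the paper appeals to a black box, you have opened the box; nothing is missing.
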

\begin{proof}
\begin{enumerate}[label=(\roman*)]
\item It is proven in Theorem $17$ in \cite[Chapter 7]{BLP}.
\item  By Theorem $17$ in \cite[Chapter 7]{BLP}, the law of the pair $(Z_{J-}, Z_J)$ is given on $\{0 \leq x \leq -y \}$ by 
$$\p ( Z_{J-} \in dx, Z_{J} - Z_{J-} \in dy \; | \; J < \infty ) = b^{-1} \exp \{ - \Phi_Z (0) x \} dx \Pi_Z(dy).$$
Moreover, as a direct consequence of \cite[Chapter $7$, Exercise $3$]{BLP} and Theorem 17(iii) \cite[Chapter $7$]{BLP}, we have that, under the conditional law $\p( \cdot  \; | \; Z_{J-} = x)$,  $J$ is distributed like a subordinator with characteristic exponent $\Phi^{\natural}(q)$ at time $x$. 
\end{enumerate}
\end{proof}

The first step in the proof of Theorem \ref{Teorema1} consists in finding an explicit expression for the Laplace transform $L$. 

Let $\Psi_{_-}$ and $\Psi_{_+}$ be as in \eqref{psim} and \eqref{psip} and denote by $\Phi_-$ and $\Phi_+$ their right inverses. 
\begin{lem}
\label{LemmaL}
Define
\begin{equation}
\label{definitionqstar}
q^* \coloneqq \max{  \{ \inf_q   { \{ \Psi_-(q) ) \} } + a_{_-}, \inf_q { \{ \Psi_+(q) \} }  + a_{_+}  \} }.
\end{equation}
Then, for $q > q^*$, it holds that 
\begin{equation}
\label{formulaforL}
L(q)=  1 - \frac{(a_{_-} - a_{_+} ) (1+ \fim{q})}{a_{_+}  (\fip{q} - \fim{q})}.
\end{equation}
For $q < q^*$, $L(q)$ is infinite. Finally, $L(q^*)$ is finite if and only if both $\Psi_{_+}$ and $\Psi_{_-}$ reach their infimum.
\end{lem}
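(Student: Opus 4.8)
The plan is to compute $L(q)=\E_1(e^{-qH}\mc E_H,\,H<\infty)$ by decomposing the path of $\xi$ according to its excursions above and below $0$. Recall $H=H(1)$ is the first hitting time of $1$ by $X$, equivalently the first hitting time of $0$ by $\xi$. Starting from $\xi_0=0$, the process immediately enters $(0,\infty)$ since $(-\infty,0)$ is irregular for $0$; it evolves as $\xi^+$ (a spectrally negative Lévy process with only negative jumps, drift $a_+$) until it first enters $(-\infty,0)$, which happens by a jump. During this first excursion above $0$ the clock $\mc E$ runs at rate $a_+$. Then the process is at some negative level $y=\xi_J<0$ and evolves as $\xi^-$ (drift $a_-$) until it climbs back to $0$, which it does by creeping since $\xi^-$ has no positive jumps; during this excursion $\mc E$ runs at rate $a_-$. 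Then the whole pattern starts afresh from $0$. Thus $H$ is a geometric-type sum: with probability $p=\p(J=\infty)$ for $\xi^+$ (computed via Lemma \ref{lemmajointlaw}(i) with $d=a_+$), the first upward excursion never comes down and $H=\infty$; otherwise we pay a term for the down-jump excursion, a term for the climb back, and recurse.

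First I would make this precise. Write $\sigma_+=\inf\{t:\xi_t<0\}$ for the length of an excursion above $0$ started from $0$ (law of $\xi^+$), $X_+=\xi^+_{\sigma_+}$ for the (negative) overshoot, and, given the process is now at level $y<0$, let $\sigma_-(y)=\inf\{t:\xi^-_t=0\mid \xi^-_0=y\}$ be the time for $\xi^-$ to climb from $y$ to $0$. Then, conditioning on $\sigma_+<\infty$,
\begin{equation}
L(q)=\E_1\!\left(e^{-q\sigma_+}e^{a_+\sigma_+}\,\mathbf 1_{\{\sigma_+<\infty\}}\right)\E\!\left(e^{-q\sigma_-(X_+)}e^{a_-\sigma_-(X_+)}\right)L(q)+\big(1-\text{(first factor absorbed)}\big),
\end{equation}
so that, solving the fixed-point equation, $L(q)=A(q)/(1-A(q)B(q))$ where $A(q)=\E(e^{(a_+-q)\sigma_+};\sigma_+<\infty)$ and $B(q)$ is the analogous Laplace transform over the return excursion, averaged over the starting level $X_+$. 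The key computational inputs are: (a) for $A(q)$, the Laplace transform of the first passage time $\sigma_+$ of $\xi^+$ below $0$ together with the overshoot distribution — by Lemma \ref{lemmajointlaw} with $Z=\xi^+$, $d=a_+$, $\Pi_Z=\Pi$, $\Psi_Z=\Psi_+$, conditionally on $\xi^+_{\sigma_+-}=x$ the time $\sigma_+$ is a subordinator at time $x$ with exponent $\Phi^\natural_+(q)=\Phi_+(q)-\Phi_+(0)$, and the pre-jump level $\xi^+_{\sigma_+-}$ has density $b_+^{-1}e^{-\Phi_+(0)x}$; (b) for $B(q)$, the climb of $\xi^-$ from $y<0$ up to $0$ happens by creeping, and by the strong Markov property / the fact that the first-passage upward process of a spectrally negative Lévy process is a subordinator, $\E(e^{-r\sigma_-(y)})=e^{y\Phi_-(r)}$ (with $y<0$), so with the exponential shift one gets $e^{y\Phi_-(q-a_-)}$, which must then be integrated against the overshoot law of the down-jump. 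Carrying out the integrals — overshoot $y=x+j$ where $x$ has the exponential density above and $-j$ is a jump of $\Pi$ with $j\le -x$ — and using the identity $\int_0^1(s^q-1)s\rho(s)\,ds$ built into $\Psi_\pm$, everything telescopes into the closed form \eqref{formulaforL}. The shift by $a_\pm$ means the relevant inverse Laplace exponents are evaluated at $q-a_-$ and at $q-a_+$, producing the arguments $\Phi_-(q-a_-)$ and $\Phi_+(q-a_+)$ that appear in $\fip{q}$, $\fim{q}$.

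To finish I would address the domain of validity. The subordinator exponents $\Phi^\natural_\pm(r)$ and the first-passage exponents $\Phi_\pm(r)$ are finite precisely when $r$ exceeds the infimum of $\Psi_\pm$; after the exponential shift by $a_\pm$ this translates into the condition $q>q^*$ with $q^*$ as in \eqref{definitionqstar}, and for $q<q^*$ at least one of the Laplace transforms entering $A(q)$ or $B(q)$ is infinite, forcing $L(q)=\infty$ (monotonicity of $L$ handles the strict inequality cleanly). At the boundary $q=q^*$ one needs $\Psi_\pm$ to actually attain its infimum for the corresponding $\Phi$ to be defined at the endpoint — this is the source of the final ``if and only if'' clause — and one checks by monotone convergence that $L(q^*)=\lim_{q\downarrow q^*}L(q)$ is finite exactly in that case. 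I expect the main obstacle to be bookkeeping: correctly combining the creeping passage of $\xi^-$ with the overshoot law of the down-jump of $\xi^+$, keeping the conditioning on the pre-jump level consistent, and verifying that the resulting double integral collapses to the stated rational expression in $\Phi_\pm(q-a_\pm)$ rather than something messier; a secondary subtlety is justifying the geometric-sum / renewal structure rigorously via the strong Markov property at the successive visits of $\xi$ to $0$, using irreducibility and the irregularity of $0$ for $(-\infty,0)$ to guarantee these return times are well defined and strictly positive.
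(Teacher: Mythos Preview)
Your ingredients are exactly those of the paper --- the joint law of $(\xi^+_{J-},\xi^+_{J},J)$ from Lemma~\ref{lemmajointlaw} for the descent, and the first-passage subordinator of $\xi^-$ for the ascent, producing the arguments $\Phi_\pm(q-a_\pm)$ --- but your assembly contains a genuine error: there is \emph{no recursion}. The quantity $H$ is the \emph{first} return of $\xi$ to $0$. Starting from $0$, the process enters $(0,\infty)$, evolves as $\xi^+$ until the down-jump at time $J$ to level $\xi_J<0$, and then evolves as $\xi^-$. Since $\xi^-$ is spectrally negative it can only reach $[0,\infty)$ by creeping, so the first time it hits $0$ from below is exactly $H$. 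Hence $H=J+(H-J)$ with a \emph{single} positive excursion followed by a \emph{single} climb-back; the sentence ``the whole pattern starts afresh from $0$'' and the fixed-point equation $L(q)=A(q)B(q)L(q)+\cdots$ are spurious. (Your displayed fixed-point relation is also internally inconsistent: it does not yield $L=A/(1-AB)$.)

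Once you drop the recursion your computation coincides with the paper's: write
\[
L(q)=\E\!\left[e^{-(q-a_+)J}\,e^{-(q-a_-)(H-J)};\,J<\infty,\ H-J<\infty\right],
\]
condition on $(\xi_{J-},\xi_J)$ via Lemma~\ref{lemmajointlaw} (so $J$ is a subordinator at time $x=\xi_{J-}$ with exponent $\Phi_+^\natural$, and $H-J$ is a subordinator at time $-(x+y)$ with exponent $\Phi_-$), and integrate the resulting double integral in $x$ and $y$; the terms recombine into \eqref{formulaforL}. Your treatment of the domain $q>q^*$ and of the boundary case $q=q^*$ is correct as written.
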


\begin{proof}
In the rest of the proof, we denote $\p_x$ and $\E_x$ to be the probability laws and the induced expectations of the process $\xi_t$ starting at $x \in \mathbb{R}$. We write $\E$ and $\p$ for $\E_0$ and $\p_0$. Let 
\begin{equation}
\label{defhittingtimenegativepart}
J= \inf \{ t > 0 \; : \; \xi_t \in (- \infty, 0] \}
\end{equation}
be the first hitting time of $(- \infty, 0]$. We note that, since the process is composed by a positive drift and a compound Poisson process, $J$ is strictly positive and the negative real line can be hit only after a jump. Thus, we denote $\xi_{J-} $ the position of the process before the jump in the negative part and $\xi_J $ the position after the jump. Obviously, $\xi_{J-} >0$ and $\xi_J < 0$. Finally, the size of the first jump to the negative part is given by $\Delta_{\xi_J}= \xi_J - \xi_{J-} $. 

\begin{figure}
\label{Fig2}
\centering
\begin{tikzpicture}
\begin{axis}[
    axis lines = left,
    xlabel = $t$,
    ylabel = {$\xi(t)$},
    axis x line=center,
    legend pos=north east
]

\addplot [purple, mark size= 2pt, only marks, mark = *,mark options={fill=white}] coordinates{(3.5,0)};
\node [above] at (axis cs:  3.5,  0) {$J$};
\addplot [blue, mark size= 2pt, only marks, mark = *,mark options=solid] coordinates{(6.5,0)};
\node [above] at (axis cs:  6.5,  0){$H$};
\addplot [purple, mark size= 2pt, only marks, mark = *,mark options=solid] coordinates{(3.5,5*3.5-28)};
\node [right] at (axis cs:  3.5,5*3.5-28){$\xi_J$};
\addplot [white, mark size= 1.5pt, only marks, mark = *,mark options=solid] coordinates{(3,5*3.5-29)};
\addplot [purple, mark size= 2pt, only marks, mark = *,mark options={fill=white}] coordinates{(3.5,2.5*3.5-1.5)};
\node [right] at (axis cs:  3.5,2.5*3.5-1.5){$\xi_{J^-}$};
\addplot [white, mark size= 1.5pt, only marks, mark = *,mark options=solid] coordinates{(3.5,2.5*3.5-1)};

\addplot [
    domain=0:1.5, 
    samples=100, 
    color=black,
]
{2.5*x};
\addplot [
    domain=3.5:5, 
    samples=100, 
    color=black,
    ]
    {5*x - 28};

\addplot [black, mark size= 1pt, only marks, mark = *,mark options=solid] coordinates{(0,0)};
\addplot [
    domain=1.5:3.5, 
    samples=100, 
    color=black,
    ]
    {2.5*x - 1.5};
    
 \addplot [black, mark size= 1pt, only marks, mark = *,mark options=solid] coordinates{(1.5,2.5*1.5-1.5)};
    \addplot [
    domain=5:6.5, 
    samples=100, 
    color=black,
    ]
    {5*x - 5*6.5};
    \addplot [black, mark size= 1pt, only marks, mark = *,mark options=solid] coordinates{(5, 5*5- 5*6.5)};
    
     \addplot [
    domain=6.5:7.5, 
    samples=100, 
    color=white,
    ]
    {2.5*x - 2.5*6.5};
 \end{axis}
\end{tikzpicture}

\caption{Decomposition of an excursion of $\xi$ at its first jump below $0$.}
\label{plot:firstjump}
\end{figure}
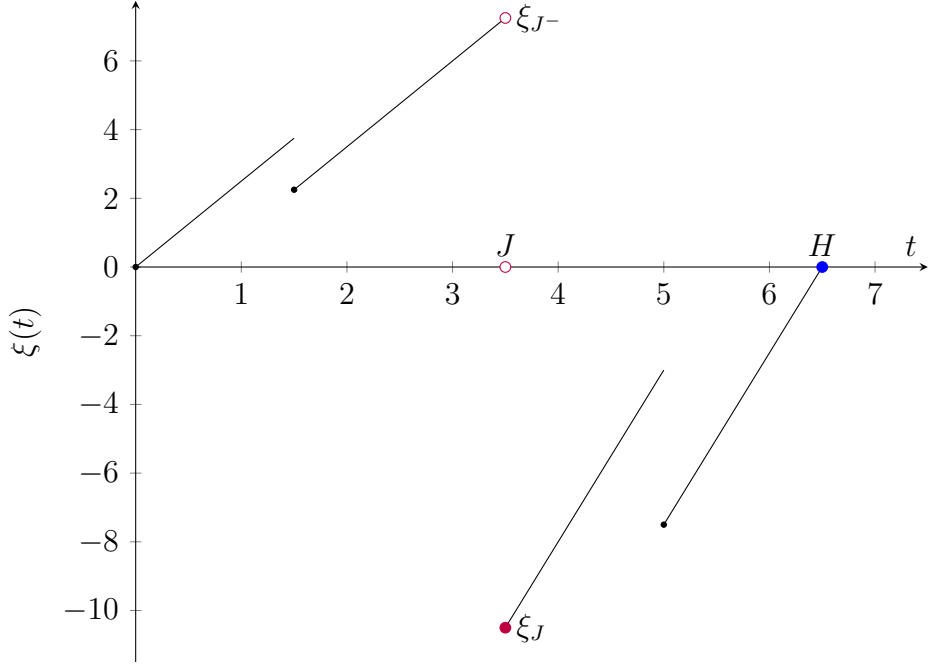

Clearly,  $J$ is a stopping time, and so

$$ \mathcal{E}_{H} = \exp \left( \int_0^{J} \underline{c}(\xi_s) ds +  \int_J^{H} \underline{c}(\xi_s) ds  \right) = \exp ( a_{_+} J ) \exp ( a_{_-} (H-J) )$$
and, for $q \in \mathbb{R}$,
\begin{equation}
\begin{split}
L(q) & = \mathbb{E}[ e^{-q H} \mathcal{E}_{H}, H < \infty ]   \\ 
& = \mathbb{E} [ e^{- (q- a_{_+}) J}  e^{- (q-a_{_-}) (H-J) } , J < \infty, H- J < \infty ] .
\end{split}
\end{equation}

Thanks to Lemma \ref{lemmajointlaw}, we can get an explicit expression after conditioning on $\xi_{J-}$ and $\xi_{J}$ and applying the Markov property to study the process $(\xi_{J+ t}$, $t>0)$ (see Figure \ref{plot:firstjump}). More precisely, $\p [\xi_{J-} \in dx, \Delta_ {\xi_J} \in dy  \; | \; J < \infty]  =  \Pi(dy) \; dx \; \mathbf{1}_{ \{0 \leq x \leq -y \}} b^{-1} e^{- \Phi_+(0)x} $, where 
\begin{equation}
b =\begin{cases} a_{_+} & \mbox{if } \Psi'_{_+}(0+) \leq 0
\\ a_{_+} -  \Psi'_{_+}(0+)  & \mbox{otherwise.}
 \end{cases}
\end{equation} 

Then,
\begin{equation}
\begin{split}
L(q) & = \E \left[ \mathbf{1}_{ \{J < \infty \}} \mathbb{E} \left[ e^{- (q- a_{_+}) J}  e^{- (q-a_{_-}) (H-J) } , H- J < \infty \; | \; J < \infty \right] \right]  \\ 
&  = \E \Bigg[ \mathbf{1}_{ \{J < \infty \}}  \int_0^{+ \infty} dx \; b^{-1}e^{- \Phi_+(0) x} \int_{- \infty}^{-x}  \Pi(dy) \;  \mathbb{E}[ e^{- (q- a_{_+}) J}  \; | \; \xi_{J-}= x, \Delta_ {\xi} = y, J < \infty]  \\
& \; \; \quad \quad \quad \quad \quad \times  \mathbb{E}[e^{- (q-a_{_-}) (H-J) } , H- J < \infty \; | \; \xi_{J-}= x, \Delta_ {\xi} = y]  \Bigg].
\end{split}
\end{equation}
Moreover, Lemma \ref{lemmajointlaw} and Theorem $1$ in \cite[Chapter $7$]{BLP} show that, under the conditional laws above, $J$ and $H-J$ are subordinators at respective time $x$ and $-x-y$, with characteristic exponents given by $\Phi_{_+}^{\natural}(q) = \Phi_{_+} (q) - \Phi_{_+} (0)$ and $ \Phi_{_-} (q)$, respectively. Recalling the definition of $q^*$ in \eqref{definitionqstar}, we see that, if $q < q^*$, at least one of the two characteristic exponents is not defined, meaning that one of the two inner expectations above is infinite. This easily implies that, if $q < q^*$, $L(q)=\infty$. \\
The same argument shows that $L(q^*)$ is finite if and only if $\Psi_+$ and $\Psi_-$ reach their infimum.

If $q > q^*$,
\begin{equation}
\begin{split}
L(q) & = \E \left[ \mathbf{1}_{ \{J < \infty \}}  \int_0^{+ \infty} dx \; b^{-1}e^{- \Phi_+(0) x} \int_{- \infty}^{-x} \Pi(dy)  e^{- (\Phi_+(q- a_{_+}) - \Phi_+ (0)) x }\cdot e^{\Phi_-(q-a_{_-})(x+y)}  \right]  \\
& = \frac{\p (J < \infty ) }{b}  \int_{- \infty}^{0} \Pi(dy) \; e^{\Phi_-(q-a_{_-})y} \;  \int_0^{-y}  dx \; e^{- x \left( \Phi_+(q- a_{_+})  - \Phi_-(q-a_{_-}) \right) } .
\end{split}
\end{equation}
By Lemma \ref{lemmajointlaw},
 $\p [ \{J < \infty \} ] / b = 1/a_{_+}$.
Thus, 
\begin{equation}
L(q) = \frac{\left[ \int_{- \infty}^{0} \;  \Pi(dy) \; e^{\fim{q} y} - \int_{- \infty}^{0} \;  \Pi(dy) \; e^{\fip{q} y}  \right]}{a_{_+}  (\fip{q} - \fim{q})}.
\end{equation}
Since $\Pi(dy)$ is finite, we can add and subtract  $\Pi((-\infty, 0))$ in order to express the numerator as 
\begin{equation}
\Psi_{_-}( \fim{q}) - a_{_-} \fim{q} -  \Psi_{_+}( \fip{q}) - a_{_+} \fip{q}
\end{equation}
and, by straightforward computations, we get the stated expression for $q > q^*$.
\end{proof}

\begin{oss}
In the case of a linear growth, that is $a_{_+}= a_{_-} = a$, we indeed obtain 
\begin{align}
L(q)& = \frac{\p (J <  \infty ) }{b}  \int_{- \infty}^{0} \Pi(dy) e^{ \Phi(q-a)y} \;  \int_0^{-y}  dx \; e^{- x \big( \Phi(q- a)  - \Phi(q-a) \big) }  \\
& = \frac{1}{a}  \int_{- \infty}^{0} (-y)  e^{\Phi(q-a)y} \;  \Pi(dy) = \frac{a - \Psi'(\Phi(q-a))}{a} \\ &= 1 - \frac{\Psi'(\Phi(q-a))}{a},
\end{align}
which coincides with the expression for $L(q)$ obtained by Bertoin and Watson in \cite[Section 6]{BW18}.
\end{oss}

It is convenient, at this point, to give the proof of Lemma \ref{lemmaasymptoticbehaviourRLP}.
\begin{proof}[Proof of Lemma \ref{lemmaasymptoticbehaviourRLP}]
First of all, \ref{plusplus} follows from the fact that $\xi$ solves $d \xi= d\xi^+ +  (a_{_-} - a_{_+}) \mathbf{1}_{ \{ Z_t < 0 \}} dt$, $\xi^+$ drifts to $+\infty$ and $a_{_-} - a_{_+} >0$.  Part \ref{minusminus} is shown in a similar way. \\
Let $\E$ and $\p$, $H$, $J$, $\xi_{J-} $, $\xi_J $ and $\Delta_{\xi_J}$ as as in the proof of Lemma \ref{LemmaL} (see Figure \ref{Fig2}). Clearly,  $H= J + (H-J)$. \\
Notice that in \ref{recplus}, \ref{minusrec} and \ref{minusplus}, $\xi$ is recurrent. Indeed, by Lemma \ref{lemmajointlaw}, $\p(J=\infty) = 0$ since $\xi^+$ is recurrent or drifts to $-\infty$. In addition, $\p(H-J=\infty) = 0$ when $\xi^-$ is recurrent or drifts to $+\infty$. Thus, $\p(H= \infty)=0$. \\ 
Now we prove \ref{recplus}. Conditioning on the law of the position before and after the jump at time $J$, we have by Lemma \ref{lemmajointlaw} that
\begin{align}
\E[J] & = \frac{1}{a_{_+}} \int_0^{+ \infty} dx \; e^{- \Phi_+(0) x} \int_{- \infty}^{-x}  \Pi(dy) \;  \mathbb{E}[ J  \; | \; \xi_{J-}= x]  \\
& = \frac{1}{a_{_+}} \int_0^{+ \infty} dx \; e^{- \Phi_+(0) x} \int_{- \infty}^{-x}  \Pi(dy) \; \left( - \frac{d}{dq} \E \left[ e^{-q J} ; | \; \xi_{J-}= x \right] \right)\Bigg|_{q=0} \\
& = \frac{1}{a_{_+}} \int_0^{+ \infty} dx \;  e^{- \Phi_+(0) x} \int_{- \infty}^{-x}  \Pi(dy) \; \left(- x \Phi'_{_+}(0)\right),
\end{align}
where the last identity comes from the fact that under the conditional law $\p(\cdot \; | \; \xi_{J-}= x, \Delta_ {\xi} = y)$, $J$ is a subordinator with characteristic exponent $\Phi_+$. We have that $\Phi'_{_+}(0)= 1/\Psi'_{_+}(0)=  \infty$, since $\xi^+$ is recurrent ($\Psi'_{_+}(0) = 0$). We conclude  that $\E[J]= \infty$ and so $\E[H]= \infty$. \\
The proof of \ref{minusrec} is similar. From above, we have that 
\begin{align}
\E[J] = \frac{1}{a_{_+}} \int_0^{+ \infty} dx \;  e^{- \Phi_+(0) x} \int_{- \infty}^{-x}  \Pi(dy) \; \left(- x \Phi'_{_+}(0)\right).
\end{align}
In this case, $\Phi'_{_+}(0)$ is negative and finite, leading to $\E[J]< \infty$. Similarly, conditioning on the jump at time $J$, we have that
\begin{align}
\label{expectedhminusj}
\E[H-J] & = - \frac{1}{a_{_+}} \int_0^{+ \infty} dx \;  e^{- \Phi_+(0) x} \int_{- \infty}^{-x}  \Pi(dy) \; \frac{(x+y)}{\Psi'_{_-}(0)},
\end{align}
where the last equality comes from the fact established in the previous proof that under the conditional law $\p(\cdot \; | \; \xi_{J-}= x, \Delta_ {\xi} = y)$, $H-J$ is a subordinator with characteristic exponent $\Phi_{_-}$ at time $-(x+y)$. Since $\xi^-$ is recurrent, the right-hand side is infinite and we conclude that $\E[H-J]=\infty$ and so $\E[H]=\infty$. Thus, $\xi$ is null recurrent. \\
Finally, \ref{minusplus} follows from the proofs of \ref{recplus} and \ref{minusrec}. As in \ref{minusrec}, $\E[J] < \infty$ since $\xi^+$ drifts to $-\infty$. Moreover, $\E[H-J] < \infty$ since $\xi^-$ drift to $+ \infty$ and $\Psi'_{_-}(0)$ ensures that $\int_{(-\infty, 0)} y \Pi(dy) < \infty$. Hence, $\E[H] < \infty$ and $\xi$ is positive recurrent.
\end{proof}

Having an explicit expression for $L$, we are able to determine the Malthus exponent $\lambda = \inf \{ q \in \mathbb{R} \; : \; L(q) < 1 \}.$
\begin{lem}
\label{lemmamalthusexponent}
Let \begin{equation}
\lambda^* = \int_0^1 \left( 1-s \right) \rho (s) ds.
\end{equation}
\begin{enumerate}[label=(\roman*)]
\item  \label{item1lemmamalthusexp}The Malthus exponent satisfies $\lambda \leq \lambda^*$.
\item \label{item2lemmamalthusexp} $L(\lambda)= 1$ if and only if 
\begin{equation}
\label{existencehypweak}
a_{_+} \leq - \int_0^1 \log (s) \;  \rho(s) ds \leq a_{_-}
\end{equation} In this case, $\lambda = \lambda^*$. 
\item \label{item3lemmamalthusexp} If the inequalities in \ref{item2lemmamalthusexp} are strict, i.e. under condition \eqref{existencehyp}, then $|L'(\lambda)|< + \infty$ and there exists $q< \lambda$ such that $1 < L(q) < \infty$.
\end{enumerate}
 \end{lem}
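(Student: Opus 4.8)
The proof runs entirely off the closed form \eqref{formulaforL} of Lemma~\ref{LemmaL}, plus two identities obtained by evaluating \eqref{psim}, \eqref{psip}, \eqref{psider} at $q=-1$. Since $\int_0^1(s^{-1}-1)s\rho(s)\,ds=\int_0^1(1-s)\rho(s)\,ds=\lambda^*$,
\[
\Psi_{_\pm}(-1)=\lambda^*-a_{_\pm},\qquad \Psi'_{_\pm}(-1)=a_{_\pm}-m,\qquad m:=-\int_0^1\log(s)\,\rho(s)\,ds,
\]
and moreover $m\ge\lambda^*$ because $-\log s\ge 1-s$ on $(0,1]$. Hence $-1$ always solves $\Psi_{_\pm}(\theta)=\lambda^*-a_{_\pm}$, and, by convexity of $\Psi_{_\pm}$, it is the \emph{largest} root — so that $\Phi_{\pm}(\lambda^*-a_{_\pm})=-1$ — precisely when $\Psi'_{_\pm}(-1)\ge 0$, i.e. $a_{_\pm}\ge m$; when $a_{_\pm}<m$ one has $\Phi_{\pm}(\lambda^*-a_{_\pm})>-1$. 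With this dictionary, \eqref{existencehyp} reads $\Psi'_{_+}(-1)<0<\Psi'_{_-}(-1)$, \eqref{existencehypweak} reads $\Psi'_{_+}(-1)\le 0\le\Psi'_{_-}(-1)$, and, comparing with \eqref{definitionqstar}, $q^*=\max\{\inf\Psi_{_-}+a_{_-},\inf\Psi_{_+}+a_{_+}\}\le\Psi_{_\pm}(-1)+a_{_\pm}=\lambda^*$, with equality iff $a_{_+}=m$ or $a_{_-}=m$.

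The core step is to evaluate $L$ at $q=\lambda^*$ (which lies in the finiteness domain $[q^*,\infty)$). If $a_{_-}\ge m$ the factor $1+\Phi_-(\lambda^*-a_{_-})$ vanishes; if moreover $a_{_+}<m$ then $\Phi_+(\lambda^*-a_{_+})>-1$, the denominator is nonzero, and $L(\lambda^*)=1$; if $a_{_+}=m$ the quotient is a genuine $0/0$, resolved by de l'Hôpital using $\Phi'_{\pm}=1/(\Psi'_{_\pm}\circ\Phi_{\pm})$ and $\Psi'_{_-}(-1)>0=\Psi'_{_+}(-1)$, giving limit $1$; if $a_{_+}>m$ the same $0/0$ computation instead gives $L(\lambda^*)=1-\tfrac{a_{_+}-m}{a_{_+}}=m/a_{_+}<1$. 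If $a_{_-}<m$, then $\theta_0:=\Phi_-(\lambda^*-a_{_-})>-1$ and, from $\Psi_{_+}(\theta_0)=\Psi_{_-}(\theta_0)-(a_{_-}-a_{_+})\theta_0=(\lambda^*-a_{_+})-(a_{_-}-a_{_+})(1+\theta_0)<\lambda^*-a_{_+}$, a short branch analysis of the convex $\Psi_{_+}$ yields $\theta_0<\Phi_+(\lambda^*-a_{_+})$, so numerator and denominator are both positive and $L(\lambda^*)<1$. Summarising: $L(\lambda^*)\le 1$ always, with $L(\lambda^*)=1\iff\eqref{existencehypweak}$. Since $L$ is non-increasing, is $+\infty$ on $(-\infty,q^*)$, and is strictly decreasing on the interior of its finiteness domain, $L(\lambda^*)\le 1$ immediately gives \ref{item1lemmamalthusexp}, i.e. $\lambda\le\lambda^*$.

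For \ref{item2lemmamalthusexp} I would finish as follows. If \eqref{existencehypweak} holds, $L(\lambda^*)=1$ and strict monotonicity force $\lambda=\lambda^*$ and $L(\lambda)=1$. If \eqref{existencehypweak} fails, then $L(\lambda^*)<1$, hence $\lambda<\lambda^*$, and it remains to rule out $L(\lambda)=1$: since $\Psi_{_\pm}$ (compound Poisson plus positive drift, finite on $[-1-\epsilon,\infty)$) attain their infima, $L$ is finite and continuous on all of $[q^*,\infty)$, so $L(\lambda)=1$ would make $\lambda$ the unique zero of $L-1$ there; evaluating \eqref{formulaforL} at $q=q^*$ — where the $\Phi$ attached to the maximising term in \eqref{definitionqstar} equals the corresponding argmin — and checking $L(q^*)<1$ excludes this, so in fact $\lambda=q^*$ and $L(\lambda)=L(q^*)<1$.

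For \ref{item3lemmamalthusexp}: under the strict inequalities \eqref{existencehyp} we have $a_{_\pm}\neq m$, hence $q^*<\lambda^*=\lambda$, so $L$ is finite near $\lambda$; moreover $\Phi_+(\lambda^*-a_{_+})$ is a genuine root of $\Psi_{_+}$ strictly above its argmin, hence finite and $>-1$. Writing $L(q)=1-\tfrac{a_{_-}-a_{_+}}{a_{_+}}\cdot\tfrac{1+\Phi_-(q-a_{_-})}{\Phi_+(q-a_{_+})-\Phi_-(q-a_{_-})}$, the numerator vanishes at $\lambda^*$ with finite nonzero derivative $\Phi'_-(\lambda^*-a_{_-})=1/\Psi'_{_-}(-1)=1/(a_{_-}-m)$ and the denominator is nonzero there, so $L$ is differentiable at $\lambda$ with $|L'(\lambda)|=\tfrac{a_{_-}-a_{_+}}{a_{_+}}\cdot\tfrac{1/(a_{_-}-m)}{\Phi_+(\lambda^*-a_{_+})+1}<\infty$; and since $q^*<\lambda^*$ and $L$ is finite, strictly decreasing, with $L(\lambda^*)=1$, every $q\in(q^*,\lambda^*)$ satisfies $1<L(q)<\infty$. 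The main difficulty throughout is the degenerate regime $a_{_+}=m$ or $a_{_-}=m$, where $q^*=\lambda^*$: then \eqref{formulaforL} is evaluated exactly at the boundary of its domain and the quotient is a true $0/0$ whose value depends on which of $\Psi'_{_\pm}\circ\Phi_{\pm}$ tends to $0$, so the limit must be computed via $\Phi'_{\pm}=1/(\Psi'_{_\pm}\circ\Phi_{\pm})$; a secondary nuisance is establishing $L(q^*)<1$ (not merely $L(\lambda^*)<1$) in the non-Malthusian regime, which is what pins down $L(\lambda)$ there.
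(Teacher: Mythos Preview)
Your argument follows the same route as the paper: translate \eqref{existencehypweak} into the sign conditions $\Psi'_{_+}(-1)\le 0\le\Psi'_{_-}(-1)$, evaluate the closed form \eqref{formulaforL} at $q=\lambda^*$ using $\Psi_{_\pm}(-1)=\lambda^*-a_{_\pm}$, resolve the degenerate $0/0$ cases via $\Phi'_{\pm}=1/(\Psi'_{_\pm}\circ\Phi_{\pm})$, and differentiate for $L'(\lambda)$. Your treatment of \ref{item1lemmamalthusexp} and \ref{item3lemmamalthusexp} is essentially identical to the paper's, and your case analysis for $L(\lambda^*)$ (including the branch argument when $a_{_-}<m$) matches the paper's, only more fully spelled out.

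The one place where you diverge is the converse of \ref{item2lemmamalthusexp}. The paper observes directly from \eqref{formulaforL} that any solution of $L(q)=1$ must satisfy $\Phi_-(q-a_{_-})=-1$, leaving $q=\lambda^*$ as the sole candidate; so once $L(\lambda^*)<1$ is established in the failure cases, the converse is immediate. You instead attempt to pin down $L(\lambda)$ by arguing that $L(q^*)<1$, but you only assert this check (``evaluating \eqref{formulaforL} at $q=q^*$ \ldots\ and checking $L(q^*)<1$'') without carrying it out, and it is not entirely trivial: one must control the sign of $\Phi_+(q^*-a_{_+})-\Phi_-(q^*-a_{_-})$ at the relevant argmin. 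This detour is avoidable --- the paper's ``only candidate is $\lambda^*$'' observation, for which you already have all the ingredients, closes the converse with no further computation.
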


\begin{proof}
We start by proving \ref{item2lemmamalthusexp}. Recall by Lemma \ref{LemmaL}, that for $q > q^*$,
 \begin{equation}
L(q)=  1 - \frac{(a_{_-} - a_{_+} ) (1+ \fim{q})}{a_{_+}  (\fip{q} - \fim{q})}.
\end{equation}
For $L(q)$ to be equal to $1$, one must have either that 
\begin{equation}
\label{phiequaltom1} \fim{q}= -1,
\end{equation}
or that the denominator explodes. However, this second case cannot occur neither when $q>q^*$ by definition of $q^*$ nor at $q^*$ when $L(q^*) < \infty$, by Lemma \ref{LemmaL}. \\
By definition of $\Phi_{_-}$, for a solution to \eqref{phiequaltom1} to exist, one must have that $\Psi_-'(-1) \geq 0$. First of all, $\Psi_-'(-1)$ is well-defined and finite thanks to \eqref{conditiononrhonecessary}. Moreover, by \eqref{psider}, $\Psi_-'(-1) \geq 0$ is equivalent to $- \int_0^1 \log (s) \;  \rho(s) ds \leq a_{_-}$ and in this case the solution to \eqref{phiequaltom1} is given by 
\begin{equation}
\lambda^* = a_{_-} + \Psi_- (-1) = \int_0^1 \left( 1-s \right) \rho (s) ds.
\end{equation}
We need to analyse the denominator at $\lambda^*$. Note that $\fip{\lambda^*} = \Phi_{_+} (\Psi_{_+} (-1))$. Hence, the denominator is non-zero if and only if  $ \Phi_{_+} (\Psi_{_+} (-1)) \neq -1$.  By convexity of $L$, this is equivalent to $\Phi_{_+} (\Psi_{_+} (-1))> -1$, \textit{i.e.} $\Psi_+'( -1) < 0$. From \eqref{psider}, we have $a_{_+} < -\int_0^1 \; ds \;  \rho(s) \log (s)$. 
If $ \Phi_{_+} (\Psi_{_+} (-1)) = -1$, the denominator is $0$. In this case, looking at the second order, we see that 
\begin{equation}
\left( 1 + \fim{\lambda^*} \right)' = \frac{1}{\dpsim{-1}} > 0
\end{equation}
and 
\begin{equation}
\left( \fip{q} - \fim{q} \right)' = \frac{\dpsim{-1} - \dpsip{-1}}{\dpsip{-1} \dpsim{-1}}.
\end{equation}
The denominator is then infinite when $\dpsip{-1} = 0$ and positive when $\dpsip{-1} >0$. This implies that $L(\lambda^*)=1$ also in the boundary case $a_{_+} = -\int_0^1 \; ds \;  \rho(s) \log (s)$. \\
\ref{item3lemmamalthusexp} One can check that
\begin{equation}
\label{derivativeLlambda}
L'(\lambda)=  - \frac{a_{_-} - a_{_+}}{a_{_+}} \frac{1}{\Psi_-'( -1)} \frac{1}{(\Phi_+ (\Psi_+ (-1)) + 1)}.
\end{equation}
Thus, $L'$ is finite and negative as long as \eqref{existencehyp} holds, \emph{i.e.}, if $\Psi_-'( -1) > 0$ and $\Psi_+'( -1) < 0$.  Combined with \eqref{conditiononrhonecessary}, this leads the existence of a solution $q$ to $\fim{q} < -1$, or, equivalently, to $1< L(q) < \infty$. 

\ref{item1lemmamalthusexp} By convexity, it suffices to show that $L(\lambda^*) \leq 1$. We already showed that if \eqref{existencehyp} holds, then $L(\lambda^*)=1$. So, we need to analyse the remaining cases. If $\Psi_{_+}'(-1) > 0$, the, computing the limit, 
\begin{equation}
\lim_{q \to \lambda^*} L(q) =1 - \frac{a_{_-} - a_{_+}}{a_{_+}} \frac{\Psi_{_+}'(-1)}{\Psi_{_-}'(-1) - \Psi_{_+}'(-1)} = 1- \frac{\Psi_{_+}'(-1)}{a_{_+}} < 1. 
\end{equation}
If $\Psi_{_-}'(-1) < 0$, using the identity $\Psi_{_-}(q) - \Psi_{_+}(q) = (a_{_-} - a_{_+})q$, we get that $L(\lambda^*) < 1$.
\end{proof}

\begin{oss}
It is important at this point to make a connection with the spectral analysis approach \eqref{eigenelements} to this problem. Since the fragmentation kernel does not depend on the size, the function $h(x)=1$ is an eigenvector with eigenvalue $\lambda^*$ for the growth-fragmentation operator $\mc A$ defined in \eqref{gfoperator}. However, even though $h$ is not negative, this is not sufficient to state that $\lambda^*$ is the principal eigenvalue, since this would require to solve the eigenvalue problem for the dual operator $\mc A^*$ and check the positivity of the dual eigenfunction.
\end{oss}

We define the martingale
\begin{equation}
\label{martingalemprime}
\mc M_t' \coloneqq \frac{X_0}{X_t} \mc E_t e^{- \lambda^* t}, \quad \quad t \geq 0.
\end{equation}
The proof that $\mc M'$ is a martingale is similar to the proof of Lemma $7.2$ of \cite{BW18} and follows from the fact that decomposing the trajectory of $X$ at its jump times we have the expression
\begin{equation}
\mc E_t = \frac{X_t}{X_0} \prod_{0 < s \leq t} \frac{X_{s-}}{X_s}.
\end{equation}
and from some properties of Poisson point processes.

We introduce the new probability measure $\tilde{\p}_x$ obtained by tilting the law of $(X_t)_{t\geq 0}$ with $\mc M'$ (see \eqref{tiltedproba}) and we denote 
$Y= (Y_t)_{t \geq 0}$ the process with distribution $\tilde{\p}_x$. 
As stated in the Introduction, the proof of Theorem \ref{Teorema1} is based on the fact that $Y$ enjoys a simpler connection with the semigroup $T$, namely
\begin{equation}
e^{- \lambda^* t}  T_t f (x) = \tilde{\E}_x \big( f(Y_t) \big).
\end{equation} 
In the following Lemma, it will become clear that condition \eqref{existencehyp} is exactly the necessary and sufficient condition for the process $(Y_t)_{t \geq 0}$ to be positive recurrent and Theorem \ref{Teorema1} will follow from the classical ergodic theory for Markov processes. Recall that under this condition, by Lemma \ref{lemmamalthusexponent}, the Malthus exponent defined in \eqref{malthusexponent1} coincides with $\lambda^*$.
\begin{lem}
\label{lemmaY} The following hold.
\begin{enumerate}[label=(\roman*)]
\item \label{GeneratorofY} The process $Y$ is Markovian and its generator is given by 
\begin{equation}
\mathcal{G}_Y g(x)  = 
\begin{cases}
a_{_-} x g'(x) + \int_0^1 (g(sx)-g(x))  \rho(s) ds   \quad 	\quad x <1  \\
a_{_+} x g'(x) + \int_0^1 (g(sx)-g(x))  \rho(s) ds  \quad   \quad   x \geq 1.
\end{cases}
\end{equation}
\item \label{definitioneta} As a consequence, there exists a refracted Lévy process $\eta$ such that 
\begin{equation}
(Y_t)_{t \geq 0}= \left( e^{\eta_t} \right)_{t \geq 0}.
\end{equation}
\item The process $Y$ is recurrent if and only if \eqref{existencehypweak} holds. More precisely, $Y$ is positive recurrent if and only if  \eqref{existencehyp} holds and null recurrent if either $a_{_+}$ or $a_{_-}$ is equal to $- \int_0^1 \log (s) \;  \rho(s) ds$.
\end{enumerate}
\end{lem}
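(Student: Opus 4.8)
The plan is to establish \ref{GeneratorofY}, \ref{definitioneta}, and the recurrence dichotomy in turn, the first two being essentially a generator computation and the last a translation of Lemma \ref{lemmaasymptoticbehaviourRLP}.

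For \ref{GeneratorofY}, the starting point is that, writing $m(x):=1/x$,
\begin{equation}
\mc M_t' = \frac{m(X_t)}{m(X_0)}\exp\left(\int_0^t \big(\underline c(X_s)-\lambda^*\big)\,ds\right),\qquad t\ge 0,
\end{equation}
so $\mc M'$ is a Feynman--Kac-type multiplicative functional of $X$, and a genuine martingale by the argument already sketched before the statement (adapting \cite[Lemma 7.2]{BW18}). Tilting $X$ by $\mc M'$ therefore produces a Markov process $Y$, whose generator is obtained from the conjugation $Q_t g = m^{-1}\,\mathbb{E}_\cdot[e^{\int_0^t(\underline c-\lambda^*)}\,(mg)(X_t)]$ of the Feynman--Kac semigroup, namely
\begin{equation}
\mc G_Y g(x) = \frac{1}{m(x)}\,\mc G(mg)(x) + \big(\underline c(x)-\lambda^*\big) g(x).
\end{equation}
A direct computation of $\mc G(mg)$ with $m(x)=1/x$ shows that the drift part yields $c(x)g'(x)$, while the fragmentation part together with the Feynman--Kac term $(\underline c-\lambda^*)g$ simplifies: using $\lambda^*=\int_0^1(1-s)\rho(s)\,ds$ one checks the pointwise identity $\mc G m = (\lambda^*-\underline c)m$, whence $\mc G_Y 1 = 0$, and the same identity collapses $\int_0^1(g(sx)-g(x))s\rho(s)\,ds$ into $\int_0^1(g(sx)-g(x))\rho(s)\,ds$, giving the claimed expression. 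To make this rigorous I would argue at the level of the martingale problem: for $g$ smooth and compactly supported, Dynkin's formula for $\mc G$ applied to $mg$ with the Feynman--Kac weight gives that $\mc M_t' g(X_t)-\int_0^t \mc M_s'\,\mc G_Y g(X_s)\,ds$ is a $\p_x$-martingale; using the defining relation \eqref{tiltedproba} of $\tp_x$, this shows $g(Y_t)-\int_0^t \mc G_Y g(Y_s)\,ds$ is a $\tp_x$-martingale, i.e. $Y$ has generator $\mc G_Y$, exactly as in \cite[Section 7]{BW18}.

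For \ref{definitioneta}, I observe that $\mc G_Y$ has exactly the structure of $\mc G$ from Section \ref{Section2}, the only difference being that $s\rho(s)\,ds$ is replaced by $\rho(s)\,ds$ in the integral term. Mimicking the construction of $\xi$, I would let $\eta^-$ be the spectrally negative Lévy process with drift $a_{_-}$ and finite Lévy measure $\tilde\Pi(dz)=e^{z}\rho(e^z)\,dz$ on $(-\infty,0)$ (finiteness: its mass is $\int_0^1\rho(s)\,ds<\infty$ by \eqref{conditiononrhonecessary}), and $\eta^+$ the analogue with drift $a_{_+}$; their Laplace exponents are
\begin{equation}
\Psi_{\eta^\mp}(q)=a_{_\mp}q+\int_0^1(s^q-1)\rho(s)\,ds,\qquad \Psi_{\eta^\mp}'(q)=a_{_\mp}+\int_0^1 s^q\log(s)\,\rho(s)\,ds .
\end{equation}
Letting $\eta$ be the (unique, by the same bounded-variation argument as for $\xi$) solution of $d\eta_t=-(a_{_-}-a_{_+})\mathbf 1_{\{\eta_t>0\}}\,dt+d\eta^-_t$, a comparison of $\mc G_Y$ with \eqref{generatorefractedlevy} and with the generator of a Lévy process shows that $e^{\eta}$ has generator $\mc G_Y$. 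Since $\mc G_Y$ generates a unique strongly continuous positive semigroup — the argument of the first Lemma of Section \ref{Section2} applies unchanged because $c$ is the same and \eqref{sublinearityofc} holds — the laws of $Y$ and of $e^\eta$ coincide under every $\tp_x$, so $(Y_t)_{t\ge0}=(e^{\eta_t})_{t\ge0}$.

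For the recurrence statement I would apply Lemma \ref{lemmaasymptoticbehaviourRLP} to $\eta$: its proof uses only that $\eta$ is a bounded-variation refracted Lévy process of the above type together with the asymptotics of $\eta^\pm$, so it carries over verbatim. Set $I:=-\int_0^1\log(s)\,\rho(s)\,ds\in(0,\infty)$, which is finite by \eqref{conditiononrhonecessary}; then $\Psi'_{\eta^\mp}(0)=a_{_\mp}-I$, so $\eta^-$ drifts to $+\infty$, is recurrent, or drifts to $-\infty$ according as $a_{_-}>I$, $a_{_-}=I$ or $a_{_-}<I$, and likewise for $\eta^+$ in terms of $a_{_+}$. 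Since $a_{_+}<a_{_-}$ forces $\Psi'_{\eta^+}(0)<\Psi'_{\eta^-}(0)$, the pair (asymptotics of $\eta^+$, asymptotics of $\eta^-$) falls into exactly the five cases \ref{plusplus}--\ref{minusplus} of Lemma \ref{lemmaasymptoticbehaviourRLP}. Recalling that $\exp$ is a homeomorphism $\mathbb{R}\to(0,\infty)$, so $Y=e^{\eta}$ inherits the recurrence type of $\eta$, we translate: if $I<a_{_+}$ (case \ref{plusplus}) or $I>a_{_-}$ (case \ref{minusminus}) then $\eta$, hence $Y$, drifts to $+\infty$ resp. to $0$ and is not recurrent; if $a_{_+}=I<a_{_-}$ (case \ref{recplus}) or $a_{_+}<I=a_{_-}$ (case \ref{minusrec}) then $\eta$, hence $Y$, is null recurrent; and if $a_{_+}<I<a_{_-}$, i.e. \eqref{existencehyp}, (case \ref{minusplus}) then $\eta$, hence $Y$, is positive recurrent, with invariant law the pushforward by $\exp$ of the stationary law of $\eta$. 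Collecting the cases yields: $Y$ is recurrent iff $a_{_+}\le I\le a_{_-}$, i.e. \eqref{existencehypweak}; positive recurrent iff \eqref{existencehyp} holds; null recurrent iff exactly one of $a_{_+},a_{_-}$ equals $I$. The only point requiring genuine care is the passage from the formal generator to a rigorous martingale problem in \ref{GeneratorofY} (integrability of $\mc M'$ and the choice of test functions); everything else is bookkeeping on top of Lemma \ref{lemmaasymptoticbehaviourRLP}.
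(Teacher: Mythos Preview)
Your proposal is correct and follows essentially the same route as the paper: for \ref{GeneratorofY} you make explicit the $h$-transform/Feynman--Kac generator computation that the paper only indicates by referring to \cite[Section 7]{BW18}, and for \ref{definitioneta} and the recurrence dichotomy you proceed exactly as the paper does, constructing $\eta$ with L\'evy measure $\tilde\Pi(dz)=e^{z}\rho(e^z)\,dz$ and then reading off the sign of $\tilde\Psi'_{\pm}(0)=a_{\pm}+\int_0^1\log(s)\rho(s)\,ds$ to feed into Lemma \ref{lemmaasymptoticbehaviourRLP}. The only cosmetic difference is that the paper records the shift identity $\tilde\Psi_{\pm}(q)=\Psi_{\pm}(q-1)-\Psi_{\pm}(-1)$, whereas you compute $\Psi'_{\eta^{\pm}}(0)$ directly; both give the same criterion.
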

\begin{proof}
\begin{enumerate}[label=(\roman*)]
\item It follows from the properties of Poisson point processes, we refer to \cite[Section 7]{BW18} for more details.  
\item The process $\eta$ can be constructed in the same way as the process $\xi$. More precisely, $\eta$ has the same drifts as $\xi$ but a different Lévy measure, given by
\begin{equation}
\tilde{\Pi}(dz) = e^{z} \rho (e^z) dz, \quad \quad z < 0.
\end{equation}
The two underlying Lévy processes $\eta^-_t$ and $\eta^+_t$ have Laplace exponents $\tilde{\Psi}_{\pm} (q)$, given by the Lévy-Khintchine formula 
\begin{equation}
\label{psietapm}
 \tilde{\Psi}_{_\pm}(q) = a_{_\pm}q + \int_0^1 (s^q -1)  \rho (s) ds. 
\end{equation}
Observe that
\begin{equation}
\tilde{\Psi}_{\pm} (q) = \Psi_{\pm} (q -1) - \Psi_{\pm} (-1).
\end{equation} 
Differentiating \eqref{psietapm}, we see that, if the first inequality of \eqref{existencehyp} holds, the process $\eta^{+}$ drifts to $- \infty$ since
\begin{equation}
(\tilde{\Psi}_{_+} )'(0)=  \Psi '_{_+} (-1) < 0, 
\end{equation}
while it is recurrent if $a_{_+} = - \int_0^1 \log (s) \rho(s) ds$.
Similarly, the process $\eta^{-}$ drifts to $+ \infty$ when the second inequality in \eqref{existencehyp} holds, since
\begin{equation}
(\tilde{\Psi}_{_-} )'(0)=  \Psi '_{_-} (-1) > 0
\end{equation}
while it is recurrent when $ \Psi '_- (-1) = 0$.
\item It follows directly from the proof of \ref{definitioneta} and Lemma \ref{lemmaasymptoticbehaviourRLP}. In fact, if either $a_{_+} $ or $a_{_-}$ is equal to $- \int_0^1 \log (s) \;  \rho(s) ds$, then either $\eta^+$ or $\eta^-$ is recurrent and Lemma \ref{lemmaasymptoticbehaviourRLP} shows that $\eta$ is null recurrent. If both the inequalities are strict, then $\eta^+$ drifts to $-\infty$ and $\eta^-$ drifts to $+\infty$ and so $\eta$ is positive recurrent.
\end{enumerate}
\end{proof}

We are ready to prove Theorem \ref{Teorema1}. 
\begin{proof}
Recall that, under \eqref{existencehyp}, $\lambda = \lambda^*$.
\begin{enumerate}[label=(\roman*)]
\item Let $x>0$. By definitions \eqref{tiltedproba} and \eqref{martingalemprime}, we have
\begin{equation}
\tilde{\E}_x \left( f(Y_t) \right) = \E_x \left( \frac{X_0}{X_t} \mc E_t e^{- \lambda t} f(X_t) \right) = e^{- \lambda t} x \E_x \left( \mc E_t \frac{f(X_t)}{X_t} \right).
\end{equation}
Hence, 
\begin{equation}
e^{- \lambda t}  T_t f (x) = \tilde{\E}_x \big( f(Y_t) \big).
\end{equation}
By Lemma \ref{lemmaY}, under condition \eqref{existencehyp}, the process $Y$ is positive recurrent, hence it has a unique stationary distribution $\nu$ and it holds
\begin{equation}
\label{convergencetostationarydistributionY}
\lim_{t \to \infty} \tilde{\E}_x \big( f(Y_t) \big) = \langle \nu, f \rangle.
\end{equation}
To show that the convergence holds exponentially fast, note that, for $\delta >0$,
\begin{equation}
\tilde{\E}_x \left[ e^{\delta H_Y(x)} \Big] = \E_x \Big[ \mc E_{H_Y(x)} e^{- (\lambda - \delta) H_Y(x)}, H_Y(x) < + \infty \right] = L_{x,x}(\lambda - \delta).
\end{equation}
Lemma \ref{lemmamalthusexponent}{\ref{item3lemmamalthusexp}} shows that, for $\delta$ small enough, $L(\lambda - \delta) < \infty$ (and thus $L_{x,x} (\lambda - \delta) < \infty$ for all $x>0$), which proves that $Y$ is exponentially recurrent.
Using Kendall's renewal theorem (see Chapter $15$ in \cite{MT09}), it is well-known that this ensures that the above convergence \eqref{convergencetostationarydistributionY} is exponentially fast.
\item The second part of the Theorem stems from \cite[Theorem $1.1$]{BERT18}, as the author proved that the sufficient condition for the Malthusian behaviour is also necessary. 
\end{enumerate}
\end{proof}

\section{Asymptotic Profile}
\label{Section4}
\label{sectionproof2}

From the proof of Theorem \ref{Teorema1}, it follows that the asymptotic profile $\nu$ is given by the unique invariant distribution of $Y$, which is characterised in the following proposition.  

\begin{prop}
\label{lemmadensityoccupationmeasure} Assume that \eqref{conditiononrhonecessary}, \eqref{growthrate} and \eqref{existencehypweak}
hold.
\begin{enumerate}[label=(\roman*)]
\item \label{prop1} The unique (up on a constant factor) invariant measure $m(dx)$ of  $Y$ is absolutely continuous with respect to the Lebesgue measure and its density is locally integrable, everywhere positive and given by
\begin{equation}
\label{densityoccm}
m(x) = \frac{q(1, x)}{c(x) \; q(x, 1)} \quad	\quad x>0, 
\end{equation}
where $q(z, x) \coloneqq \tilde{\p}_z \left(H_Y(x) < H_Y(z) \right)$.
\item \label{prop2} If, moreover, \eqref{existencehyp} holds, $(Y_t)_{t \geq 0}$ is positive recurrent and its stationary distribution is 
\begin{equation}
\nu(dx) = \frac{m(dx)}{\langle m, \mathbf{1} \rangle},
\end{equation}
where the total mass of $m(dx)$ is given by
\begin{equation}
\label{totalmass}
\langle m, \mathbf{1} \rangle = \frac{a_{_-} - a_{_+}}{a_{_+}} \frac{1}{\Psi_-'( -1)} \frac{1}{(\Phi_+ (\Psi_+ (-1)) + 1)} < \infty.
\end{equation}
\end{enumerate}
\end{prop}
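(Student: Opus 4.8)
The plan is to obtain $m$ as the occupation measure of a generic excursion of $Y$ away from the reference point $1$, to read off its density from the path structure of $Y$ (which creeps upward and jumps downward), and to recover the normalising constant in part (ii) from the already-computed derivative $L'(\lambda)$.

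\emph{Part (i).} By the second part of Lemma \ref{lemmaY}, $Y=(e^{\eta_t})_{t\ge 0}$ with $\eta$ a refracted Lévy process, and by its third part $Y$ is recurrent under \eqref{existencehypweak}; moreover, by the same argument as for $\xi$, $Y$ is irreducible once the degenerate case $\rho\equiv 0$ is excluded. Starting from $1$, the process leaves $1$ instantly (since $c(1)>0$) and, being recurrent, returns to $1$ at arbitrarily large times, so the successive returns to $1$ form a regenerative structure. Standard renewal theory then gives that
\begin{equation}
m(dx):=\tilde{\E}_1\!\left[\int_0^{H_Y(1)}\mathbf{1}_{\{Y_s\in dx\}}\,ds\right]
\end{equation}
— with the convention that $H_Y(z)$ under $\tilde{\p}_z$ denotes the first \emph{return} time of $Y$ to $z$ — is a $\sigma$-finite invariant measure, and is the unique invariant measure up to a multiplicative constant (see e.g. \cite{MT09}). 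I would then identify its density. The process $Y$ is piecewise deterministic: between consecutive jumps it follows the flow $\dot y=c(y)>0$, hence is strictly increasing, while all of its jumps are downward (they correspond to the dislocation ratios $s\in(0,1)$). Therefore $Y$ crosses a level $x>0$ upward only by creeping, and the sojourn of $Y$ in an infinitesimal interval $[x,x+dx]$ during one cycle is, up to an $o(dx)$ term (the cycles entering $[x,x+dx]$ through a downward jump contribute only $O(dx^2)$, both the probability of such a jump and the ensuing sojourn being $O(dx)$), equal to
\begin{equation}
\frac{dx}{c(x)}\,\times\,\big(\text{number of upcrossings of the level }x\text{ before }H_Y(1)\big).
\end{equation}

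\emph{Part (i), density.} By the strong Markov property applied at the successive visits of $Y$ to $x$, the expected number of such upcrossings, starting from $1$, factorises into the probability $q(1,x)=\tilde{\p}_1\big(H_Y(x)<H_Y(1)\big)$ of reaching $x$ before returning to $1$, times the mean $1/q(x,1)$ of the (geometrically distributed) number of visits to $x$ before the cycle ends, where $q(x,1)=\tilde{\p}_x\big(H_Y(1)<H_Y(x)\big)$. This yields $m(dx)=\frac{q(1,x)}{c(x)\,q(x,1)}\,dx$. The density is everywhere positive because $c>0$ and, by irreducibility, $q(1,x),q(x,1)>0$ for every $x>0$; it is locally integrable because on any compact $[a,b]\subset(0,\infty)$ one has $q(1,\cdot)\le 1$, $\inf_{[a,b]}c>0$, and $\inf_{[a,b]}q(\cdot,1)>0$, the last bound following from irreducibility together with the (lower semi-)continuity of $x\mapsto q(x,1)$.

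\emph{Part (ii).} Under the strict inequalities \eqref{existencehyp}, the third part of Lemma \ref{lemmaY} gives that $Y$ is positive recurrent, so $m$ is finite and $\nu(dx)=m(dx)/\langle m,\mathbf{1}\rangle$ is the unique stationary distribution. Since $Y$ takes values in $(0,\infty)$, $\langle m,\mathbf{1}\rangle=\tilde{\E}_1[H_Y(1)]$ is precisely the mean return time of $Y$ to $1$, which I would compute exactly as in the proof of Theorem \ref{Teorema1}: by optional stopping, for small $\delta\ge 0$,
\begin{equation}
\tilde{\E}_1\!\left[e^{\delta H_Y(1)}\right]=\E_1\!\left[\mc{E}_{H_Y(1)}\,e^{-(\lambda-\delta)H_Y(1)},\,H_Y(1)<\infty\right]=L(\lambda-\delta),
\end{equation}
and differentiating at $\delta=0$ — legitimate because, by the third part of Lemma \ref{lemmamalthusexponent}, $L$ is finite immediately to the left of $\lambda$ with $|L'(\lambda)|<\infty$ — gives $\langle m,\mathbf{1}\rangle=-L'(\lambda)=|L'(\lambda)|$. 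Substituting the expression \eqref{derivativeLlambda} for $L'(\lambda)$ produces
\begin{equation}
\langle m,\mathbf{1}\rangle=\frac{a_{_-}-a_{_+}}{a_{_+}}\,\frac{1}{\Psi_-'(-1)}\,\frac{1}{\Phi_+(\Psi_+(-1))+1},
\end{equation}
which is finite under \eqref{existencehyp} since $\Psi_-'(-1)>0$ and, $-1$ lying strictly to the left of the minimum of $\Psi_+$, $\Phi_+(\Psi_+(-1))>-1$.

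\emph{Main obstacle.} The genuinely delicate point is the identification of the occupation density with \emph{precisely} $q(1,x)/(c(x)\,q(x,1))$, the multiplicative constant being $1$: one must control the $o(dx)$ contribution of jump-entries into $[x,x+dx]$ uniformly in the relevant regime, and deal with the discontinuity of $c$ at $x=1$ (where $Y$ still only creeps through, so the density merely has a jump there). It is precisely this normalisation that makes $\langle m,\mathbf{1}\rangle$ coincide with the mean return time, hence with $-L'(\lambda)$; the remaining ingredients — recurrence and positive recurrence (inherited from Lemma \ref{lemmaasymptoticbehaviourRLP} via Lemma \ref{lemmaY}), existence and uniqueness of the invariant measure, and the value of the total mass — are routine.
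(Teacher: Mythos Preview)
Your proposal is correct and follows essentially the same route as the paper: for (i) the paper simply invokes \cite[Lemma~5.2]{BW18} (whose proof is the upcrossing/occupation-measure argument you sketch), and for (ii) the paper likewise identifies $\langle m,\mathbf{1}\rangle=\tilde{\E}_1[H_Y(1)]$ and recovers it as $-L'(\lambda)$ via the change of measure $d\tilde{\p}/d\p=\mc M'$, then substitutes \eqref{derivativeLlambda}. The only cosmetic difference is that the paper writes the computation of the mean return time directly as $\E_1\big(H\,e^{-\lambda H}\mc E_H,\,H<\infty\big)=-L'(\lambda)$ rather than differentiating $\delta\mapsto L(\lambda-\delta)$ at $0$; your flagged ``main obstacle'' about the exact normalisation is precisely what is hidden inside the citation to \cite{BW18}.
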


\begin{proof}
The proof of \ref{prop1} follows adapting Lemma $5.2$ in \cite{BW18} to our setting.  
We proved in Theorem \ref{Teorema1} that $(Y_t)_{t \geq 0}$ is recurrent if and only if \eqref{existencehypweak} holds and positive (and also exponentially) recurrent if and only if condition \eqref{existencehyp} holds. For the proof of \eqref{totalmass}, we recall (see \cite[Lemma 5.2]{BW18}) that the unique invariant measure of the process $Y$ is its occupation measure, defined as
\begin{equation}
\label{occupationmeasure}
\langle m, f \rangle \coloneqq \tilde{\E}_{x_0} \left( \int_0^{H_Y(x_0)} f(Y_s) ds \right),
\end{equation}
where $H_Y(x) = \inf \{ t > 0 \; : \; Y_t = x \}$. As a result, its mass is given by
\begin{equation}
\langle m, \mathbf{1} \rangle \coloneqq \tilde{\E}_{1} \left[ H_Y(1) \right].
\end{equation}
Recalling that 
\begin{equation}
\frac{\text{d} \tilde{\p}}{\text{d} \p} \Big|_{\mathcal{F}_t} = \mathcal{M}'_t,
\end{equation}
with a change of probability measure, we have
\begin{equation}
\langle m, \mathbf{1} \rangle =  \E_1 \left(H \; \mc M'_{H}, \; H < \infty  \right),
\end{equation}
where $H$ is the hitting time of the level $1$ for the process $X$.
Note that 
\begin{equation}
\mc M_H' \coloneqq \frac{X_0}{X_H} \mc E_H e^{- \lambda H},
\end{equation}
which implies
\begin{equation}
\begin{split}
\langle m, \mathbf{1} \rangle =  \E_1 \left(H \; e^{- \lambda H} \; \mc E_H, \; H < \infty  \right) = - L'(q).
\end{split}
\end{equation}
From \eqref{derivativeLlambda},
\begin{equation}
- L'(q) = \frac{a_{_-} - a_{_+}}{a_{_+}} \frac{1}{\Psi_-'( -1)} \frac{1}{(\Phi_+ (\Psi_+ (-1)) + 1)}.
\end{equation}
\end{proof}

Since $(Y_t)_{t \geq 0}= (e^{\eta_t})_{t \geq 0}$, it suffices to compute the invariant measure of $\eta$.  After a change of variables in \eqref{densityoccm}, its density, which we denote by $\overline{m}(y)$, can be expressed as
\begin{equation}
\label{densitymeasureeta}
\overline{m}(y) = \frac{q(1, e^{y})}{ \underline{c}(e^{y}) \; q(e^{y}, 1)} \quad	\quad y \in \mathbb{R},
\end{equation}
where
\begin{equation}
 \underline{c}(e^{y})  = \begin{cases}
 a_{_+} \quad \quad y \geq 0,\\
 a_{_-} \quad \quad y < 0.
 \end{cases}
\end{equation}
We use $\tp$ for the law of the process $\eta$ and $\te$ for the associated expectation and we denote 
\begin{equation}
H_{\eta}(y) \coloneqq \inf \{ t>0 \; : \; \eta_t=y \} .
\end{equation}
Thus, $q(1, e^{y}) = \tp_{0} \left( H_{\eta}(y) < H_{\eta} (0) \right)$ and a similar expression holds for $q(e^{y},1)$. To compute such quantities, we rely on the theory of scale functions. They appear in most of the results about boundary crossing problems and related path decompositions of spectrally negative Lévy processes. We refer to \cite{KKR12} for a comprehensive introduction about the topic.
\begin{defn}
For a given spectrally negative Lévy process $Z$, with Laplace exponent $\Psi_Z$, we define the scale function $W:  \mathbb{R} \to [0, \infty)$ as follows. We have $W(x)=0$ when $x < 0$, and otherwise on $[0, \infty)$, $W$ is the unique right continuous function whose Laplace transform is given by
\begin{equation}
\label{definitionscalefunctiongeneral}
\int_0^{\infty} e^{- q x} W(x) dx = \frac{1}{\Psi_Z(q)},
\end{equation}
for $q > \Phi_Z(0)$, where $\Phi_Z$ denotes the right-inverse of $\Psi_Z$. 
\end{defn}
One immediate application, which is central in our computations, is given by the so-called $\textit{two-sided exit problem}$, recalled in the following lemma. 
\begin{lem}
\label{lemmatwosidedexitproblem}
Let $Z$ a spectrally negative Lévy process and denote $\p$ and $\E$ its probability law and the corresponding expectation. For $a > 0$, define $$\tau_a^+ = \inf \{ t > 0 \; : \; Z_t > a \} \quad \text{and} \quad \tau_0^- = \inf \{ t > 0 \; : \; Z_t < 0 \}.$$
For all $x < a $, it holds that
\label{corollarytwosidedexit}
\begin{equation}
\p_x \left(\tau_a^+ < \tau_0^-\right) = \frac{W(x)}{W(a)}.
\end{equation} 
\end{lem}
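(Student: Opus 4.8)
The plan is to recall this as a standard fact from the fluctuation theory of spectrally negative Lévy processes; for a fully detailed treatment I would simply refer to \cite{KKR12}, but the argument is short and I sketch it here. It has two ingredients: the Esscher change of measure, used to reduce to a process that does not drift to $-\infty$, and the strong Markov property applied at the upward passage time $\tau_a^+$, exploiting that $Z$ creeps upward.

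First I would perform the Esscher (Cramér) transform. Write $\Phi_Z(0)\ge 0$ for the right inverse of $\Psi_Z$ at $0$; then $(\exp\{\Phi_Z(0)(Z_t-x)\})_{t\ge 0}$ is a unit-mean $\p_x$-martingale, and we let $\p_x^{\natural}$ be the law it defines. Under $\p_x^{\natural}$ the process is again spectrally negative, with Laplace exponent $q\mapsto\Psi_Z(q+\Phi_Z(0))$ and right derivative at $0$ equal to $\Psi_Z'(\Phi_Z(0)+)\ge 0$ (strictly positive unless $Z$ already oscillates under $\p$); comparing Laplace transforms in \eqref{definitionscalefunctiongeneral} shows its scale function is $W^{\natural}(y)=e^{-\Phi_Z(0)y}W(y)$. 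Since $Z$ has no positive jumps, on $\{\tau_a^+<\tau_0^-\}$ it reaches $a$ by creeping, so $Z_{\tau_a^+}=a$ there; moreover $\exp\{\Phi_Z(0)(Z_{t\wedge\tau_a^+\wedge\tau_0^-}-x)\}$ is bounded by $e^{\Phi_Z(0)(a-x)}$, so optional stopping identifies the density $\mathrm d\p_x/\mathrm d\p_x^{\natural}$ on $\mc F_{\tau_a^+\wedge\tau_0^-}$ and gives $\p_x(\tau_a^+<\tau_0^-)=e^{-\Phi_Z(0)(a-x)}\,\p_x^{\natural}(\tau_a^+<\tau_0^-)$. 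Once the non-degenerate case is settled, this matches $W^{\natural}(x)/W^{\natural}(a)=e^{\Phi_Z(0)(a-x)}W(x)/W(a)$, the exponential prefactors cancelling to leave $W(x)/W(a)$.

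It then remains to treat $\Psi_Z'(0+)\ge 0$. When $\Psi_Z'(0+)>0$, so that $Z$ drifts to $+\infty$ and $W(\infty)=1/\Psi_Z'(0+)<\infty$, I would use the one-sided exit (ruin) identity $\p_x(\tau_0^-=\infty)=\Psi_Z'(0+)W(x)=W(x)/W(\infty)$, $x\ge 0$ (see \cite{KKR12}), together with the strong Markov property: starting from $x<a$, on $\{\tau_0^-=\infty\}$ the process must first reach $a$, so $\{\tau_0^-=\infty\}\subseteq\{\tau_a^+<\tau_0^-\}$, and conditioning at $\tau_a^+$ (where $Z_{\tau_a^+}=a$) yields $\p_x(\tau_0^-=\infty)=\p_x(\tau_a^+<\tau_0^-)\,\p_a(\tau_0^-=\infty)$; dividing the two identities gives $\p_x(\tau_a^+<\tau_0^-)=W(x)/W(a)$. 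The borderline case $\Psi_Z'(0+)=0$ (where $Z$ oscillates) would then follow by a routine approximation, e.g.\ tilting by $e^{\varepsilon Z_t}$ for small $\varepsilon>0$, applying the previous step, and letting $\varepsilon\downarrow 0$ using continuity of the scale functions in the Laplace exponent.

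The genuinely substantial content is the background input — the existence and basic properties of $W$ and the one-sided exit formula — which I would take from \cite{KKR12} rather than reprove. Within the argument itself, the only delicate bookkeeping concerns the lower boundary $0$: in the alternative, more direct proof (showing that $W(Z_{t\wedge\tau_a^+\wedge\tau_0^-})$ is a martingale and then applying optional stopping, which is in fact the cleanest route for the processes actually used in this paper) one must know that $Z$ does not creep downward across $0$, for otherwise the boundary term $W(Z_{\tau_0^-})$ need not vanish; this is automatic for the bounded-variation, strictly-positive-drift processes $\xi^{\pm}$, $\eta^{\pm}$ arising here, for which $\tau_0^-$ is always attained by a jump into $(-\infty,0)$. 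In the Esscher-plus-ruin route above this subtlety does not appear, since $Z_{\tau_a^+\wedge\tau_0^-}$ is evaluated only on the event $\{\tau_a^+<\tau_0^-\}$, on which it equals $a$.
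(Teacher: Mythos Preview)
Your sketch is correct and follows the standard route (Esscher tilt to the non-drifting-to-$-\infty$ case, then the ruin identity plus the strong Markov property at $\tau_a^+$, using that a spectrally negative process creeps upward). However, the paper does not actually prove this lemma: it is merely \emph{recalled} as a known fact from the fluctuation theory of spectrally negative L\'evy processes, with a pointer to \cite{KKR12}. So there is nothing to compare against --- you have supplied more than the paper does. Your remarks on the bounded-variation case and the absence of downward creeping are accurate and relevant to the processes $\xi^{\pm},\eta^{\pm}$ used later, though again the paper simply takes the formula as given.
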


We denote respectively $\tilde{W}_{_+}$ and $\tilde{W}_{_-}$ the scale functions associated to the Lévy processes $\eta_{_+}$ and $\eta_{_-}$. This means that $\tilde{W}_{_\pm} (x) = 0$ for $x < 0$ and 
\begin{equation}
\label{definitionscalefunction}
\int_0^{\infty} e^{- q x} \tilde{W}_{_\pm}(x) dx = \frac{1}{\tilde{\Psi}_{_\pm}(q) }.
\end{equation}
Let $\tilde{\Phi}_{_+}$ be the right inverse of $\tilde{\Psi}_{_+}$. We define
\begin{equation}
\label{bplus}
\bpl \coloneqq \tilde{\Phi}_{_+}(0).
\end{equation}
Since the process $\eta^+$ drifts to $-\infty$, $\bpl> 0$.
The first result expresses the density of the occupation measure of $\eta$ in a rather complicated form in terms of the scale functions $\tilde{W}_{_\pm}$.

\begin{lem}
\label{lemmadensitywithscalefunctions}
Let $y > 0$. The density of the invariant measure of the process $\eta$ is given by
\begin{equation}
\begin{split}
& \m(y)  = \frac{1}{a_{_+}} \left(a_{_+} \tilde{W}_{_+} (y) - \int_0^{+ \infty} e^{- \bpl x} dx  \int_{-x - y}^{-x} \tilde{\Pi}(dz) \tilde{W}_{_+} (y+x+z) \right)^{-1}, \\
& \m(-y)  = \tilde{W}_- (y) - \frac{1}{a_{_+}} \int_0^{+ \infty} e^{- \bpl x} dx  \int_{-x - y}^{-x} \tilde{\Pi}(dz) \tilde{W}_- (y+x+z).
\end{split}
\end{equation}
\end{lem}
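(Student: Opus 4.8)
The plan is to evaluate, for $y>0$, the four boundary-crossing probabilities $q(1,e^{y})$, $q(e^{y},1)$, $q(1,e^{-y})$ and $q(e^{-y},1)$ that occur in \eqref{densitymeasureeta}, and then to substitute them into $\m(y)=q(1,e^{y})/(a_{_+}\,q(e^{y},1))$ and $\m(-y)=q(1,e^{-y})/(a_{_-}\,q(e^{-y},1))$. The whole computation rests on two structural features of $\eta$: while $\eta$ stays in $[0,\infty)$ it is a copy of the spectrally negative Lévy process $\eta^{+}$, and while $\eta<0$ it is a copy of $\eta^{-}$; and since $\eta^{\pm}$ are of bounded variation with strictly positive drifts $a_{_\pm}$, each point is irregular for the half-line strictly below it, these processes never creep downwards, and $\eta$ can re-enter a level from above only after a jump that takes it strictly below it. Under the standing assumptions $\eta^{+}$ drifts to $-\infty$ (so $\bpl=\tilde{\Phi}_{_+}(0)>0$), while $\eta^{-}$ is recurrent or drifts to $+\infty$ and hence a.s.\ reaches every level above its current position. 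With these facts, each $q$ reduces to a two-sided exit for $\eta^{+}$ or for $\eta^{-}$, to which Lemma~\ref{lemmatwosidedexitproblem} applies, together with $\tilde{W}_{_\pm}(0)=1/a_{_\pm}$.

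Two of the probabilities are immediate. Under $\tp_{0}$ the event $\{H_{\eta}(y)<H_{\eta}(0)\}$ coincides a.s.\ with $\{\eta^{+}$ started at $0$ reaches $y$ before entering $(-\infty,0)\}$: indeed, $\eta$ runs as $\eta^{+}$ until it first enters $(-\infty,0)$, and once there it must cross $0$ before it can reach $y>0$. Lemma~\ref{lemmatwosidedexitproblem} gives $q(1,e^{y})=\tilde{W}_{_+}(0)/\tilde{W}_{_+}(y)=1/(a_{_+}\tilde{W}_{_+}(y))$. Symmetrically, under $\tp_{-y}$ the event $\{H_{\eta}(0)<H_{\eta}(-y)\}$ equals a.s.\ $\{\eta^{-}$ started at $-y$ reaches $0$ before entering $(-\infty,-y)\}$, so $q(e^{-y},1)=\tilde{W}_{_-}(0)/\tilde{W}_{_-}(y)=1/(a_{_-}\tilde{W}_{_-}(y))$.

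For $q(e^{y},1)=\tp_{y}(H_{\eta}(0)<H_{\eta}(y))$ I would condition at $\sigma=\inf\{t>0:\eta_{t}<y\}$, which is a.s.\ finite and keeps $\eta$ strictly above $y$ on $(0,\sigma)$, so neither $0$ nor $y$ is visited on $(0,\sigma)$. By the strong Markov property, given $\eta_{\sigma}=w<y$, the probability of next reaching $0$ before $y$ is $1-\tilde{W}_{_+}(w)/\tilde{W}_{_+}(y)$: for $w\in(0,y)$ this is the two-sided exit of $\eta^{+}$ from $(0,y)$ — exit at the top is a return to $y$, exit below is a jump into $(-\infty,0)$ after which $\eta$ creeps back to $0$ before it can reach $y$ — and for $w<0$ it equals $1=1-0$. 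Thus $q(e^{y},1)=1-\tilde{W}_{_+}(y)^{-1}\,\te_{y}\big[\tilde{W}_{_+}(\eta_{\sigma})\big]$. By spatial homogeneity of $\eta^{+}$, $\te_{y}[\tilde{W}_{_+}(\eta_{\sigma})]=\te_{0}[\tilde{W}_{_+}(y+\eta_{J})]$, where $J=\inf\{t>0:\eta_{t}<0\}$ and, up to $J$, the process $\eta$ under $\tp_{0}$ is exactly $\eta^{+}$. Lemma~\ref{lemmajointlaw} applied to $Z=\eta^{+}$ — whose drift is $d=a_{_+}$, whose $\Phi_{Z}(0)$ equals $\bpl$, and for which $\Psi_{Z}'(0+)=\tilde{\Psi}_{_+}'(0)=\Psi'_{_+}(-1)<0$, so that $b=a_{_+}$ and $\p(J<\infty)=1$ — shows that the joint law of $(\eta_{J-},\,\eta_{J}-\eta_{J-})$ under $\tp_{0}$ is $a_{_+}^{-1}e^{-\bpl x}\,dx\,\tilde{\Pi}(dz)$ on $\{0\le x\le -z\}$. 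Integrating $\tilde{W}_{_+}(y+x+z)$ against this measure, and noting that $\tilde{W}_{_+}$ restricts the inner integral to $z\in(-x-y,-x)$, yields $\te_{0}[\tilde{W}_{_+}(y+\eta_{J})]=a_{_+}^{-1}\int_{0}^{\infty}e^{-\bpl x}\,dx\int_{-x-y}^{-x}\tilde{\Pi}(dz)\,\tilde{W}_{_+}(y+x+z)$. The quantity $q(1,e^{-y})=\tp_{0}(H_{\eta}(-y)<H_{\eta}(0))$ is treated in the same manner: under $\tp_{0}$ the process first enters $(-\infty,0)$ precisely at $J$, at $\eta_{J}<0$; given $\eta_{J}=w<0$, the two-sided exit of $\eta^{-}$ from $(-y,0)$ (after a shift by $y$) gives probability $1-\tilde{W}_{_-}(w+y)/\tilde{W}_{_-}(y)$ of reaching $-y$ before $0$, so $q(1,e^{-y})=1-\tilde{W}_{_-}(y)^{-1}\,\te_{0}[\tilde{W}_{_-}(\eta_{J}+y)]$, and the same use of Lemma~\ref{lemmajointlaw} gives $\te_{0}[\tilde{W}_{_-}(\eta_{J}+y)]=a_{_+}^{-1}\int_{0}^{\infty}e^{-\bpl x}\,dx\int_{-x-y}^{-x}\tilde{\Pi}(dz)\,\tilde{W}_{_-}(y+x+z)$.

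Substituting the four expressions into $\m(y)=q(1,e^{y})/(a_{_+}q(e^{y},1))$ and $\m(-y)=q(1,e^{-y})/(a_{_-}q(e^{-y},1))$ and simplifying the resulting fractions gives exactly the two displayed formulas. I expect the main obstacle to be the argument just outlined for $q(e^{y},1)$ and $q(1,e^{-y})$: one must justify carefully, using the irregularity of levels for the lower half-line and the absence of downward creeping, that an excursion of the refracted process splits into at most one two-sided exit for $\eta^{+}$ followed by at most one two-sided exit for $\eta^{-}$ (or vice versa), and then match the law of the positions just before and just after the first down-crossing of $0$ with the one furnished by Lemma~\ref{lemmajointlaw}. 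The remaining work with scale functions is routine bookkeeping.
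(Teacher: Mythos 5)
Your proposal is correct and follows essentially the same route as the paper: it evaluates the four hitting probabilities in \eqref{densitymeasureeta} via the two-sided exit identity of Lemma \ref{lemmatwosidedexitproblem} and the undershoot/overshoot law at the first passage below $0$ from Lemma \ref{lemmajointlaw} (with $b=a_{_+}$, $\tilde{\Phi}_{_+}(0)=\bpl$), then substitutes into $\m(\pm y)$ using $\tilde{W}_{_\pm}(0)=1/a_{_\pm}$. The only difference is presentational: you package the post-jump contribution as $\te\big[\tilde{W}_{_\pm}(y+\eta_J)\big]$ and let the vanishing of the scale functions on the negative half-line absorb the case of a jump past the target level, where the paper splits the integral over the two ranges of $z$ explicitly.
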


\begin{proof}
From \eqref{densityoccm}, $\m (0)= 1/a_{_+}$, and $ \lim_{y \to 0^-} \m(y) =1/a_{_-}$.

Now consider $y > 0$. Then 
\begin{equation}
\m (-y) = \frac{1}{a_{_-}}\frac{\tp_0 (H_{\eta}(-y) < H_{\eta}(0))}{ \tp_{-y} (H_{\eta}(0) < H_{\eta}(-y))}.
\end{equation}

The event in the denominator depends on the path of $\eta$ while it evolves in the negative half-line, therefore it has the same law as $\eta^-$, that we denote by $\tilde{\p}^{^-}$. Shifting the Lévy process $\eta^-$ and applying Lemma \ref{lemmatwosidedexitproblem}, we get 
\begin{equation}
\tp_{-y} (H_{\eta}(0) < H_{\eta}(- y)) = \tilde{\p}_0^-(H_{\eta}(y) < H_{\eta}(0)) = \frac{\tilde{W}_- (0)}{\tilde{W}_- (y)}.
\end{equation}
On the other hand, using Lemma \ref{lemmajointlaw},
\begin{equation}
\begin{split}
\tp_0 (H_{\eta}(-y) < H_{\eta}(0)) & = \frac{1}{a_{_+}} \int_0^{+ \infty} e^{- \bpl x} dx \int_{-\infty}^{-x} \tilde{\Pi}(dz) \; \tilde{\p}^-_{x+z} (H_{\eta}(- y) < H_{\eta}(0))  \\
&= \frac{1}{a_{_+}} \int_0^{+ \infty} e^{- \bpl x} dx  \int_{-\infty}^{-x - y} \tilde{\Pi}(dz) \; \tilde{\p}^-_{x+z} (H_{\eta}(- y) < H_{\eta}(0)) \\
& \hspace{1.5cm} + \frac{1}{a_{_+}} \int_0^{+ \infty} e^{- \bpl x} dx  \int_{-x - y}^{-x} \tilde{\Pi}(dz) \;\tilde{\p}^-_{x+z} (H_{\eta}(- y) < H_{\eta}(0))  \\
& = 1 - \frac{1}{a_{_+}} \int_0^{+ \infty} e^{- \bpl x} dx  \int_{-x - y}^{-x} \tilde{\Pi}(dz) \frac{\tilde{W}_- (y+x+z)}{\tilde{W}_- (y)}.
\end{split}
\end{equation}
In the end, since $\tilde{W}_-(0) = 1/a_{_-}$, it follows
\begin{equation}
\m (- y)  = \tilde{W}_- (y) - \frac{1}{a_{_+}} \int_0^{+ \infty} e^{- \bpl x} dx  \int_{-x - y}^{-x} \tilde{\Pi}(dz) \tilde{W}_- (y+x+z).
\end{equation}

Now we analyse
\begin{equation}
\m(y) = \frac{1}{a_{_+}}\frac{\tilde{\p}_0 (H_{\eta}(y) < H_{\eta}(0))}{ \tilde{\p}_y (H_{\eta}(0) < H_{\eta}(y))}.
\end{equation}

Again by Lemma \ref{lemmatwosidedexitproblem}, the numerator is given by $\tilde{\p}_0 (H_{\eta}(y) < H_{\eta}(0)) = \tilde{W}_{_+} (0) / \tilde{W}_{_+}(y)$.
We now compute the denominator.
\begin{equation}
\begin{split}
\tilde{\p}_y (H_{\eta}(0) & < H_{\eta}(y))  = \tilde{\tilde{\p}}^-_0 (H_{\eta}(-y) < H_{\eta}(0) ) \\ 
& = \frac{1}{a_{_+}} \int_0^{+ \infty} e^{- \bpl x} dx \int_{-\infty}^{-x} \tilde{\Pi}(dz) \; \tilde{\p}^-_{x+z} (H_{\eta}(- y) < H_{\eta}(0)) \\
& = 1 - \frac{1}{a_{_+}} \int_0^{+ \infty} e^{- \bpl x} dx  \int_{-x - y}^{-x} \tilde{\Pi}(dz) \frac{\tilde{W}_{_+} (y+x+z)}{\tilde{W}_{_+} (y)}.
\end{split}
\end{equation}
Thus,
\begin{equation}
\m(y) 
= \frac{1}{a_{_+}} \left(a_{_+} \tilde{W}_{_+} (y) - \int_0^{+ \infty} e^{- \bpl x} dx  \int_{-x - y}^{-x} \tilde{\Pi}(dz) \tilde{W}_{_+} (y+x+z) \right)^{-1},
\end{equation}
which concludes the proof of Lemma \ref{lemmadensitywithscalefunctions}.
\end{proof}

The density $\overline{m}(y)$ for $y \geq 0$ can be expressed in a more explicit way. 
\begin{lem}
\label{lemmadensityetapositive}
The density $\overline{m}(y)$ of the invariant measure  of the process $(\eta_t)_{t \geq 0}$ is given on $\mathbb{R}_+$ by
\begin{equation}
\m(y) =  \frac{1}{a_{_+}} e^{- \bpl y}.
\end{equation}
\end{lem}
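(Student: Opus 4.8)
The plan is to start from the closed form obtained in Lemma \ref{lemmadensitywithscalefunctions}: for $y>0$,
\[
\m(y) = \frac{1}{a_{_+}}\left(a_{_+}\tilde{W}_{_+}(y) - \int_0^{+\infty} e^{-\bpl x}\,dx\int_{-x-y}^{-x}\tilde{\Pi}(dz)\,\tilde{W}_{_+}(y+x+z)\right)^{-1},
\]
so that the whole statement reduces to the scalar identity
\[
a_{_+}\tilde{W}_{_+}(y) - \int_0^{+\infty} e^{-\bpl x}\,dx\int_{-x-y}^{-x}\tilde{\Pi}(dz)\,\tilde{W}_{_+}(y+x+z) = e^{\bpl y},\qquad y>0,
\]
the boundary value $y=0$ being immediate since $\m(0)=1/a_{_+}=\tilde{W}_{_+}(0)$. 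Heuristically this identity says that the occupation measure that $\eta$ accumulates on $(0,\infty)$ during an excursion away from $0$ is the $0$-potential measure of $\eta^+$ started at $0$ and killed on entering $(-\infty,0)$, namely $\tilde{W}_{_+}(0)e^{-\bpl y}\,dy$; rather than invoking this I would prove the identity directly by Laplace transforms, using only \eqref{definitionscalefunction}, the expression \eqref{psietapm} and $\bpl=\tilde{\Phi}_{_+}(0)$.

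Both sides of the identity are continuous functions of $y\geq 0$, so it suffices to check that they have the same Laplace transform on $(\bpl,+\infty)$. Fix $\theta>\bpl$. By \eqref{definitionscalefunction} the first term contributes $a_{_+}/\tilde{\Psi}_{_+}(\theta)$. For the double integral I would apply Fubini — legitimate because $\tilde{\Pi}$ is a finite measure (of total mass $\int_0^1\rho(s)\,ds<\infty$ by \eqref{conditiononrhonecessary}) and $\tilde{W}_{_+}$ is Laplace integrable on $(\bpl,\infty)$ — then substitute $u=y+x+z$ in the $y$-integral, and finally exchange the $x$- and $z$-integrations, observing that the constraint $z\in(-x-y,-x)$ forces $z<0$ and, for fixed such $z$, $x\in(0,-z)$. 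After computing $\int_0^{-z}e^{(\theta-\bpl)x}\,dx=(e^{-(\theta-\bpl)z}-1)/(\theta-\bpl)$ and using $\tilde{\Psi}_{_+}(q)=a_{_+}q+\int_{(-\infty,0)}(e^{qz}-1)\,\tilde{\Pi}(dz)$ together with $\tilde{\Psi}_{_+}(\bpl)=0$, the Laplace transform of the double integral becomes
\[
\frac{1}{\tilde{\Psi}_{_+}(\theta)(\theta-\bpl)}\int_{(-\infty,0)}\left(e^{\bpl z}-e^{\theta z}\right)\tilde{\Pi}(dz) = \frac{1}{\tilde{\Psi}_{_+}(\theta)}\left(a_{_+}-\frac{\tilde{\Psi}_{_+}(\theta)}{\theta-\bpl}\right).
\]

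Subtracting, the Laplace transform of the left-hand side of the identity collapses to $1/(\theta-\bpl)$, which is exactly $\int_0^{+\infty}e^{-\theta y}e^{\bpl y}\,dy$. By injectivity of the Laplace transform the identity holds for every $y>0$, and together with the value at $y=0$ this yields $\m(y)=\frac{1}{a_{_+}}e^{-\bpl y}$ on $\mathbb{R}_+$. I expect the only genuinely delicate point to be the bookkeeping of the integration domains in the two Fubini/change-of-variable steps: once the region $\{z<0,\ 0<x<-z,\ u:=y+x+z>0\}$ is correctly parametrised, everything else is a routine computation.
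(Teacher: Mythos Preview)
Your proposal is correct and follows essentially the same route as the paper: both take the Laplace transform of the bracketed expression coming from Lemma~\ref{lemmadensitywithscalefunctions}, reduce it via Fubini, the substitution $u=y+x+z$, and the identities $\tilde{\Psi}_{_+}(\bpl)=0$ and \eqref{psietapm} to $1/(\theta-\bpl)$, and conclude by injectivity of the Laplace transform. Your write-up is in fact slightly more careful than the paper's in justifying Fubini (finiteness of $\tilde{\Pi}$ via \eqref{conditiononrhonecessary}) and in tracking the integration region $\{z<0,\ 0<x<-z\}$.
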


\begin{proof}
Let $y \geq 0$ and let $\mc L$ be the Laplace transform operator.

\begin{equation}
\begin{split}
\mc L \bigg( a_{_+ } & \frac{1}{\m} \bigg)(q)  = a_{_+} \int_0^{\infty} e^{-q y} \frac{1}{\m(y)} dy \\
& = \int_0^{\infty} dy \; e^{-q y} \left(a_{_+} \tilde{W}_{_+} (y) - \int_0^{+ \infty} e^{- \bpl x} dx  \int_{-x - y}^{-x} \tilde{\Pi}(dz) \tilde{W}_{_+} (y+x+z) \right).
\end{split}
\end{equation}
By \eqref{definitionscalefunction}, it follows that
\begin{equation}
a_{_+} \int_0^{\infty} dy \; e^{-q y}  \tilde{W}_{_+} (y) = \frac{a_{_+}}{\tilde{\Psi}_{_+}(q)}. 
\end{equation}
For the second term, 
\begin{equation}
\begin{split}
& \int_0^{\infty} dy \; e^{-q y} \left( \int_0^{+ \infty} dx \; e^{- \bpl x}   \int_{-x - y}^{-x} \tilde{\Pi}(dz) \tilde{W}_{_+} (y+x+z) \right) \\
& =  \int_0^{\infty} dy \; e^{-q y}\int_0^{+ \infty} dx \;  e^{- \bpl x}  \int_{- \infty}^0 \tilde{\Pi}(dz)\mathbf{1}_{ \{0 < y+x+z \} } \mathbf{1}_{ \{x < -z \} } \tilde{W}_{_+} (y+x+z) \\
& = \int_{- \infty}^0 \tilde{\Pi}(dz) \; e^{q z }  \int_0^{-z} dx \; e^{- (\bpl-q) x}   \int_0^{\infty} dh \; e^{-q h } \tilde{W}_{_+} (h)  \\
& =  \frac{1}{\tilde{\Psi}_{_+}(q)} \int_{- \infty}^0 \tilde{\Pi}(dz) \; e^{q z }  \int_0^{-z} e^{- (\bpl-q) x} dx. 
\end{split}
\end{equation}
Computing the integral and recalling the definition of the Laplace exponent $\tilde{\Psi}_{_+}$, we obtain that the second term is equal to
\begin{equation}
\begin{split}
  \frac{1}{\tilde{\Psi}_{_+}(q)(\bpl-q) }  \Bigg[ \int_{- \infty}^0 \tilde{\Pi}(dz) \big( e^{q z }& - 1  \big) - \int_{- \infty}^0 \tilde{\Pi}(dz)\big( e^{\bpl z} -1 \big)  \Bigg]  \\
& =  \frac{ \tilde{\Psi}_{_+}(q)  + a_{_+} (\bpl - q)}{\tilde{\Psi}_{_+}(q)(\bpl-q) }  = \frac{1}{\bpl- q} + \frac{a_{_+}}{\bpl}.
\end{split}
\end{equation}
As a result, 
\begin{equation}
\mc L \left(a_{_+}\frac{1}{\m} \right)(q) = \frac{1}{q-\bpl} = \mc L \left(e^{\bpl \cdot} \right)(q).
\end{equation}
Since the Laplace transforms is injective, the assertion follows.
\end{proof}

Changing the variables, we have the following result for the invariant distribution of the process $(Y_t)_{t \geq 0}$. Let 
\begin{equation}
\label{normalizingc}
c_1 \coloneqq \frac{1}{\langle m, \mathbf{1} \rangle}.
\end{equation}
\begin{prop}
\label{lemmaoccupationmeasurexmagg1}
The invariant distribution $\nu(dx)$ of the process $(Y_t)_{t \geq 0}$ on $(1, \infty)$ has a density given by 
\begin{equation}
\nu(x) = \frac{c_1}{a_{_+}} x^{-(1+\bpl)}.
\end{equation}
\end{prop}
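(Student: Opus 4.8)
The plan is to read off the statement from Lemma~\ref{lemmadensityetapositive} after making explicit, at the level of invariant measures, the change of variables $x = e^y$ that relates $(Y_t)_{t\ge0}$ to the refracted Lévy process $(\eta_t)_{t\ge0}$, and then to normalise using the total mass computed in Proposition~\ref{lemmadensityoccupationmeasure}. In other words, the whole content of the Proposition is a Jacobian bookkeeping step on top of the already-proved explicit form of $\m$ on $\mathbb{R}_+$.

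Concretely, since $(Y_t)_{t\ge0} = (e^{\eta_t})_{t\ge0}$, the invariant measure $m(dx)$ of $Y$ is the image of the invariant measure $\m(y)\,dy$ of $\eta$ under $y\mapsto e^y$; reversing the substitution that led from \eqref{densityoccm} to \eqref{densitymeasureeta}, and using $c(x) = x\,\underline c(x)$, this reads
\begin{equation}
m(x) = \frac{1}{x}\,\m(\log x), \qquad x > 0,
\end{equation}
the factor $1/x$ being the Jacobian $dy/dx$. For $x > 1$ we have $\log x > 0$, so Lemma~\ref{lemmadensityetapositive} applies at the point $y = \log x$ and gives $\m(\log x) = a_{_+}^{-1} e^{-\bpl\log x} = a_{_+}^{-1}\,x^{-\bpl}$; substituting,
\begin{equation}
m(x) = \frac{1}{a_{_+}}\,x^{-(1+\bpl)}, \qquad x > 1.
\end{equation}

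Finally, under~\eqref{existencehyp}, Proposition~\ref{lemmadensityoccupationmeasure}\ref{prop2} ensures that $Y$ is positive recurrent with stationary law $\nu(dx) = m(dx)/\langle m,\mathbf{1}\rangle$, where $\langle m,\mathbf{1}\rangle < \infty$; since $c_1 = 1/\langle m,\mathbf{1}\rangle$ by~\eqref{normalizingc}, we conclude $\nu(x) = c_1\,m(x) = (c_1/a_{_+})\,x^{-(1+\bpl)}$ on $(1,\infty)$, which is the claim. There is no genuine obstacle here; the only points worth a line of care are the Jacobian in the change of variables and the observation that $\log x$ lands in the range $[0,\infty)$ on which Lemma~\ref{lemmadensityetapositive} provides a closed form — which is precisely why, in contrast with the regime $x < 1$, the density is completely explicit for $x \ge 1$.
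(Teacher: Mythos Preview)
Your proof is correct and follows exactly the approach indicated in the paper, which simply states ``Changing the variables, we have the following result'' before the proposition; you have merely made that change of variables explicit and added the normalisation step.
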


Now we want to study the behaviour of $\tilde{m} (y) \coloneqq \m(-y)$, for $y> 0$. We recall that
\begin{equation}
\tilde{m} (y) = \tilde{W}_- (y) - \frac{1}{a_{_+}} \int_0^{+ \infty} e^{- \bpl x} dx  \int_{-x - y}^{-x} \tilde{\Pi}(dz) \tilde{W}_- (y+x+z).
\end{equation}

\begin{lem}
\label{lemmalaplacetransformnegativedensity}
 The Laplace transform of $\tilde{m}$ is 
\begin{equation}
\label{laplacetransformnegativedensity}
\mc L ( \tilde{m}  ) (q) = \frac{ \tilde{\Psi}_{_+}(q) }{a_{_+}\tilde{\Psi}_{_-}(q) (q- \bpl) }.
\end{equation}
\end{lem}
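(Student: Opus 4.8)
The plan is to compute $\mc L(\tilde m)(q)$ directly from the expression
$\tilde m(y) = \tilde W_-(y) - \frac{1}{a_{_+}} \int_0^{\infty} e^{-\bpl x}\,dx \int_{-x-y}^{-x} \tilde\Pi(dz)\, \tilde W_-(y+x+z)$,
following exactly the same strategy used in the proof of Lemma \ref{lemmadensityetapositive}. The first term contributes $\mc L(\tilde W_-)(q) = 1/\tilde\Psi_{_-}(q)$ by the defining relation \eqref{definitionscalefunction}. For the second term, I would rewrite the double integral with indicator functions, $\int_{-x-y}^{-x}\tilde\Pi(dz) \cdots = \int_{-\infty}^0 \tilde\Pi(dz)\,\mathbf 1_{\{0 < y+x+z\}}\mathbf 1_{\{x<-z\}}\cdots$, then apply Fubini to integrate in $y$ first (substituting $h = y+x+z$), which produces the factor $\int_0^\infty e^{-qh}\tilde W_-(h)\,dh = 1/\tilde\Psi_{_-}(q)$, leaving
$\frac{1}{\tilde\Psi_{_-}(q)} \int_{-\infty}^0 \tilde\Pi(dz)\, e^{qz} \int_0^{-z} e^{-(\bpl - q)x}\,dx$.

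Carrying out the inner $x$-integral gives $\frac{1}{\bpl - q}\big(1 - e^{-(\bpl-q)(-z)}\big) = \frac{1}{\bpl - q}\big(1 - e^{(\bpl - q)z}\big)$, so after multiplying by $e^{qz}$ the $z$-integral becomes
$\frac{1}{\bpl - q}\big[\int_{-\infty}^0 \tilde\Pi(dz)(e^{qz}-1) - \int_{-\infty}^0 \tilde\Pi(dz)(e^{\bpl z}-1)\big]$, where I have added and subtracted the total mass $\tilde\Pi((-\infty,0))$ exactly as in Lemma \ref{lemmadensityetapositive}. Recalling from \eqref{psietapm} that $\tilde\Psi_{_+}(q) = a_{_+}q + \int_{-\infty}^0\tilde\Pi(dz)(e^{qz}-1)$ — equivalently $\int_{-\infty}^0 \tilde\Pi(dz)(e^{qz}-1) = \tilde\Psi_{_+}(q) - a_{_+}q$ — and using $\tilde\Psi_{_+}(\bpl) = 0$ (since $\bpl = \tilde\Phi_{_+}(0)$), the bracket equals $(\tilde\Psi_{_+}(q) - a_{_+}q) - (0 - a_{_+}\bpl) = \tilde\Psi_{_+}(q) + a_{_+}(\bpl - q)$. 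Hence the second term of $\mc L(\tilde m)(q)$ is $\frac{1}{a_{_+}}\cdot\frac{1}{\tilde\Psi_{_-}(q)}\cdot\frac{\tilde\Psi_{_+}(q) + a_{_+}(\bpl-q)}{\bpl - q} = \frac{1}{a_{_+}\tilde\Psi_{_-}(q)}\Big(\frac{\tilde\Psi_{_+}(q)}{\bpl-q} + a_{_+}\Big)$.

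Subtracting, $\mc L(\tilde m)(q) = \frac{1}{\tilde\Psi_{_-}(q)} - \frac{1}{a_{_+}\tilde\Psi_{_-}(q)}\cdot\frac{\tilde\Psi_{_+}(q)}{\bpl - q} - \frac{1}{\tilde\Psi_{_-}(q)} = -\frac{\tilde\Psi_{_+}(q)}{a_{_+}\tilde\Psi_{_-}(q)(\bpl - q)} = \frac{\tilde\Psi_{_+}(q)}{a_{_+}\tilde\Psi_{_-}(q)(q - \bpl)}$, which is the claimed formula. The main thing to be careful about is the Fubini interchange and the range of validity in $q$: the manipulations require $q$ in a strip where all the integrals converge (in particular $q > \bpl$ would make $1/(q-\bpl)>0$, but the scale-function Laplace transform $1/\tilde\Psi_{_-}(q)$ only needs $q > \tilde\Phi_{_-}(0) = 0$ and the jump integrals converge for $q \geq -1-\epsilon$ by \eqref{conditiononrhonecessary}), after which the identity extends by analytic continuation; I would note this rather than belabour it. A minor subtlety is the apparent pole at $q = \bpl$, which is removable because $\tilde\Psi_{_+}(\bpl) = 0$, consistent with $\tilde m$ being a genuine (finite, locally integrable) density. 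No serious obstacle is expected — this is essentially a repetition of the Lemma \ref{lemmadensityetapositive} computation with $\tilde W_{_+}$ replaced by $\tilde W_{_-}$ and without the cancellation that there collapsed everything to a pure exponential.
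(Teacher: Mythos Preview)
Your proposal is correct and is exactly the computation the paper has in mind: the paper does not spell out a proof of Lemma~\ref{lemmalaplacetransformnegativedensity}, presumably because it is a line-by-line repetition of the Laplace-transform calculation already carried out in the proof of Lemma~\ref{lemmadensityetapositive}, with $\tilde W_{_+}$ replaced by $\tilde W_{_-}$ and the cancellation stopping one step short. Your algebra is right, including the identification $\int_{-\infty}^{0}\tilde\Pi(dz)(e^{qz}-1)=\tilde\Psi_{_+}(q)-a_{_+}q$ and the use of $\tilde\Psi_{_+}(\bpl)=0$.
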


Unfortunately, in the negative part of the plane the Laplace transform cannot be inverted easily and it is not possible to obtain an expression for $\tilde{m}(y)$ as explicit as in Lemma \ref{lemmadensityetapositive}. However, we can describe its behaviour as $y \to \infty$. \\
In the following, we assume that there exists  $\bmin >0$ such that  
\begin{equation}
\label{Cramerscondition}
\tilde{\Psi}_{_-}(- \bmin) = 0.
\end{equation}
This condition is known in the literature as \textit{Cramér's condition}. 
\begin{lem}
\label{lemmabehaviourdensitynear0}
Suppose that \eqref{existencehyp} and \eqref{Cramerscondition} hold. 
Then, we have
\begin{equation}
\tilde{m}(y) \sim C e^{- y \bmin  } \quad \quad y \to \infty,
\end{equation}
with
\begin{equation}
\label{bigC}
C= - \frac{(a_{_-} - a_{_+}) \bmin}{a_{_+} \tilde{\Psi}'_{_-}(-\bmin) (\bmin + \bpl)}.
\end{equation}
\end{lem}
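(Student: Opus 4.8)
The plan is to read off the behaviour of $\tilde{m}$ at infinity from the singularities of its Laplace transform. By Lemma~\ref{lemmalaplacetransformnegativedensity},
\[
\mc{L}(\tilde{m})(q)=\frac{\tilde{\Psi}_{_+}(q)}{a_{_+}\,\tilde{\Psi}_{_-}(q)\,(q-\bpl)},
\]
and, since $(Y_t)_{t\ge0}=(e^{\eta_t})_{t\ge0}$ is positive recurrent under \eqref{existencehyp}, one has $\tilde{m}\in L^1(0,\infty)$, so this identity holds for $\Re q\ge 0$ and furnishes a meromorphic continuation of $\mc{L}(\tilde{m})$ to the half-plane on which $\tilde{\Psi}_{_\pm}$ are analytic. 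First I would locate the singularities of the right-hand side, using $\tilde{\Psi}_{_+}(q)=\tilde{\Psi}_{_-}(q)-(a_{_-}-a_{_+})q$. The apparent pole at $q=\bpl$ is removable, because $\tilde{\Psi}_{_+}(\bpl)=0$ by \eqref{bplus}; the apparent pole at $q=0$ is removable, because $\tilde{\Psi}_{_+}$ and $\tilde{\Psi}_{_-}$ both vanish there to first order with non-zero derivatives (here \eqref{existencehyp} is used). Cramér's condition \eqref{Cramerscondition} produces the zero $q=-\bmin$ of $\tilde{\Psi}_{_-}$; it is simple, since $\tilde{\Psi}_{_-}$ is convex and $-\bmin$ is its smaller zero, so that $\tilde{\Psi}'_{_-}(-\bmin)<0$; and the numerator does not vanish there, because $\tilde{\Psi}_{_+}(-\bmin)=(a_{_-}-a_{_+})\bmin\neq 0$. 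Hence $\mc{L}(\tilde{m})$ is analytic on $\{\Re q>-\bmin\}$ with a single, simple pole at $q=-\bmin$.

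Next I would compute the residue at that pole,
\[
\mathrm{Res}_{q=-\bmin}\,\mc{L}(\tilde{m})(q)=\frac{\tilde{\Psi}_{_+}(-\bmin)}{a_{_+}\,\tilde{\Psi}'_{_-}(-\bmin)\,(-\bmin-\bpl)}=-\frac{(a_{_-}-a_{_+})\bmin}{a_{_+}\,\tilde{\Psi}'_{_-}(-\bmin)\,(\bmin+\bpl)}=C,
\]
which is exactly the constant in \eqref{bigC}; note $C>0$ since $\tilde{\Psi}'_{_-}(-\bmin)<0$, consistent with $\tilde{m}\ge 0$. Since $C/(q+\bmin)$ is the Laplace transform of $y\mapsto Ce^{-\bmin y}$, the function $G(q):=\mc{L}(\tilde{m})(q)-C/(q+\bmin)$ is analytic on $\{\Re q>-\bmin\}$ and extends analytically across $q=-\bmin$. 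To finish, I would invert $G$ by shifting the Bromwich contour from a line $\Re q=c>-\bmin$ to a line $\Re q=c'\in(-\bmin-\delta,-\bmin)$ for small $\delta>0$: since $\mc{L}(\tilde{m})(q)=(a_{_-}q)^{-1}(1+O(1/q))$ as $|\Im q|\to\infty$ in a vertical strip, the horizontal pieces of the contour vanish, and, after one integration by parts in $q$, the integral along $\{\Re q=c'\}$ converges and is $O(e^{c'y}/y)$. This gives $\tilde{m}(y)=Ce^{-\bmin y}+O(e^{c'y}/y)$ and in particular the claimed equivalence $\tilde{m}(y)\sim Ce^{-\bmin y}$ as $y\to\infty$.

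The main obstacle is this last, Tauberian, step, where two points need care. First, the contour shift presupposes that $\tilde{\Psi}_{_\pm}$ — hence $\mc{L}(\tilde{m})$ — extend analytically strictly to the left of $-\bmin$, i.e.\ $\int_0^1 s^{-\bmin-\delta}\rho(s)\,ds<\infty$ for some $\delta>0$, and that $\tilde{\Psi}_{_-}$ has no further zeros, real or complex, with $\Re q\in(-\bmin-\delta,-\bmin)$; the absence of complex zeros is a standard but non-trivial property of spectrally negative Laplace exponents, which I would obtain by bounding $\Re\,\tilde{\Psi}_{_-}$ away from $0$ on such a strip. Second, should $\tilde{\Psi}_{_-}$ admit no analytic extension past $-\bmin$, the contour argument is unavailable and one instead appeals to an exponential Tauberian theorem, legitimate here because $\tilde{m}$ is nonnegative and locally regular — in fact $C^1$ — being built from the continuously differentiable scale functions $\tilde{W}_{_\pm}$ via Lemma~\ref{lemmadensitywithscalefunctions}. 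In the explicit case $\rho(s)=s^{\gamma-1}$ treated in Section~\ref{section5}, where $\tilde{\Psi}_{_-}$ is rational with domain $(-\gamma,\infty)\ni-\bmin$, the contour argument applies verbatim and even yields the sharper bound $\tilde{m}(y)=Ce^{-\bmin y}+O\big(e^{-(\bmin+\delta)y}\big)$.
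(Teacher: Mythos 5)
Your identification of the singular structure of $\mc L(\tilde m)$ and your residue computation are correct, and the constant you obtain at $q=-\bmin$ is exactly the paper's $C$ (it coincides with the value $h(0)$ appearing in the paper's proof). The gap is in the final inversion step, and it is precisely the point you flag as "the main obstacle" without resolving it. The Bromwich-contour shift past $-\bmin$ requires $\int_0^1 s^{-\bmin-\delta}\rho(s)\,ds<\infty$ for some $\delta>0$ together with a zero-free vertical strip to the left of $-\bmin$; neither is granted by \eqref{existencehyp} and \eqref{Cramerscondition}. Cramér's condition only guarantees that $\tilde{\Psi}_{_-}$ is finite \emph{at} $-\bmin$ (indeed $\tilde{\Psi}_{_-}(-\bmin)=0$ forces $\int_0^1 s^{-\bmin}\rho(s)\,ds<\infty$), and $-\bmin$ may sit exactly at the abscissa beyond which the exponent ceases to exist, so your primary argument proves the lemma only under an additional hypothesis. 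Your fallback — "an exponential Tauberian theorem, legitimate because $\tilde m$ is nonnegative and locally $C^1$" — does not close this: Wiener--Ikehara-type theorems under mere nonnegativity plus boundary regularity of the transform yield asymptotics of the integrated quantity $\int_0^y e^{\bmin u}\tilde m(u)\,du$, not of the density; upgrading to $\tilde m(y)\sim Ce^{-\bmin y}$ needs a genuine Tauberian side condition (e.g.\ that $e^{\bmin y}\tilde m(y)$ is slowly decreasing or eventually monotone), which local smoothness of the scale functions does not by itself provide and which you do not establish.

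For comparison, the paper never leaves the closed strip $-\bmin\le\Re q\le 0$: it first checks that $\tilde{\Psi}_{_-}(-\omega'+i\tau)$ vanishes on that strip only at the two real points $0$ and $-\bmin$, then treats $f(t)=\frac{d}{dt}\bigl(e^{\bmin t}\tilde m(t)\bigr)$ as a tempered distribution, splits its Fourier transform on the critical line into an explicitly invertible elementary term plus a remainder $h$ with $h(\tau)=O(|\tau|^{-1})$ and $h'(\tau)=O(|\tau|^{-1})$, deduces via Cauchy--Schwarz that $\mathcal F^{-1}h\in L^1$, and integrates to get $\lim_{t\to\infty}e^{\bmin t}\tilde m(t)=h(0)=C$. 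That boundary-line $L^1$ argument is exactly the substitute for the missing analytic continuation, and it (or an equivalent Tauberian condition that you would have to verify) is what your proof still needs.
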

\begin{proof}  Recall that
\begin{equation}
 \tilde{\Psi}_{_\pm}(q) = a_{_\pm}q + \int_0^1 (s^q -1)  \rho (s) ds,
\end{equation}
and $\bpl > 0$.
The two functions $ \tilde{\Psi}_{_\pm}$ extend to holomorphic functions on a neighbourhood of $\{ - \bmin \leq \Re(z) < \infty \}$ in the complex plane. 
As a consequence, the right--hand side of \eqref{laplacetransformnegativedensity} defines a holomorphic function on the set $\{\Re(z) > - \bmin \}$ and therefore \eqref{laplacetransformnegativedensity} holds for any $q \in \mathbb{C}$ with $\Re(q) > - \bmin$. \\
Let $0 \leq \omega' \leq \bmin$. Then $ \tilde{\Psi}_{_-}$ vanishes only when $\omega' \in \{ 0, \bmin \}$ and $\tau = 0$ since
\begin{equation}
\begin{split}
\Re \left(\tilde{\Psi}_{_-}(- \omega' + i \tau) \right) & = a_{_-} (- \omega') + \int_0^1 (s^{- \omega'} \cos( \tau \ln s) -1)  \rho (s) ds \\
& < a_{_-} (- \omega') + \int_0^1 (s^{- \omega'} -1)  \rho (s) ds  \leq 0.
\end{split}
\end{equation}
For $0 < \omega' < \bmin$, $\frac{d}{dt} \left( e^{\omega' t} \tilde{m}(t) \right)$ is a tempered distribution with Fourier transform
\begin{equation}
\label{fouriertransformderivativeomegaprime}
\mathcal{F } \left( \frac{d}{dt} \left( e^{\omega' t} \tilde{m}(t) \right) \right) (\tau) = \frac{ i \tau \tilde{\Psi}_{_+}(- \omega' + i \tau) }{a_{_+}\tilde{\Psi}_{_-}(- \omega' + i \tau)) (- \omega' + i \tau  - \bpl) }.
\end{equation}
The right-hand side of \eqref{fouriertransformderivativeomegaprime} is a smooth and bounded function, thanks to the fact that $\tilde{\Psi}_{_-}(- \omega' + i \tau) =0$ if and only if $\omega' \in \{ 0, \bmin \}$ and $\tau = 0$. For the same reason, when $\omega' \uparrow \bmin$, it converges to the same expression with $\omega'$ replaced by $\bmin$. \\
We deduce that $f(t) \coloneqq \frac{d}{dt} \left( e^{\bmin t} \tilde{m}(t) \right)$ is a tempered distribution with everywhere smooth Fourier transform 
\begin{equation}
\label{fouriertransformderivativeomega}
\mathcal{F }f (\tau) = \frac{ i \tau \tilde{\Psi}_{_+}(- \bmin + i \tau) }{a_{_+}\tilde{\Psi}_{_-}(- \bmin + i \tau)(- \bmin + i \tau  - \bpl) }.
\end{equation}
We now set $k \coloneqq a_{_+}/a_{_-} \in (0, 1)$ and we notice that
\begin{equation}
\mathcal{F }f (\tau) = \frac{ i  k \tau  }{a_{_+} (- \bmin + i \tau  - \bpl) } + h(\tau),
\end{equation}
where
\begin{equation}
h(\tau) \coloneqq \frac{ i (1 - k) \tau \left( \int_0^1 (s^{- \bmin + i \tau } -1)  \rho (s) ds \right)}{a_{_+} \left( a_{_-}(- \bmin + i \tau) + \int_0^1 (s^{- \bmin + i \tau } -1)  \rho (s) ds \right) \left(- \bmin + i \tau  - \bpl \right) }.
\end{equation}
Since $h(\tau)= O(|\tau|^{-1})$ asymptotically at $\infty$, we have that $h \in L^2$ and, thus, $g(t)= \mathcal{F}^{-1}h \in L^2$. Also, we get 
\begin{equation}
\mc F (tg(t))(\tau) = i h'(\tau)= O(|\tau^{-1}|),
\end{equation}
and, thus, $tg(t) \in L^2$. Summing up, $(1+ |t|)|g(t)| \in L^2$ and thus $|g(t)| = (1+ |t|)^{-1})(1+ |t|) |g(t)| \in L^1$ by Cauchy-Schwarz. On the other hand, 
\begin{equation}
\frac{ i  k \tau  }{a_{_+} (- \bmin + i \tau  - \bpl) } = \frac{1}{a_{_-}} + \frac{\bmin +\bpl }{a_{_-} (- \bmin + i \tau  - \bpl)},
\end{equation}
whose inverse Fourier transform is $\frac{1}{a_{_-}} \delta_0 -  \frac{\bmin +\bpl }{a_{_-}}  e^{(\bmin +\bpl)} \mathbf{1}_{(- \infty, 0)}$. \\
So, in the distributional sense, 
\begin{equation}
\frac{d}{dt} \left( e^{\bmin t} \tilde{m}(t) \right)= \frac{1}{a_{_-}} \delta_0 -  \frac{\bmin +\bpl }{a_{_-}}  e^{(\bmin +\bpl)} \mathbf{1}_{(- \infty, 0)} + g(t).
\end{equation}
From this, we get that, on $(0, + \infty)$, $\tilde{m}$ is continuous and 
\begin{equation}
\begin{split}
\lim_{t \to + \infty} e^{\bmin t} \tilde{m}(t) & = \int_{\mathbb{R}} dt \left( \frac{1}{a_{_-}} \delta_0 -  \frac{\bmin +\bpl }{a_{_-}}  e^{(\bmin +\bpl)} \mathbf{1}_{(- \infty, 0)} + g(t) \right) \\
& = \int_{\mathbb{R}} g(t) dt = h(0) = \frac{(1 - \eta) \int_0^1 \rho (s) (s^{- \bmin} - 1) ds}{a_{_+} \tilde{\Psi}'_{_-}(-\bmin) (-\bmin - \bpl)} \\
& = - \frac{(1- \eta) a_{_-} \bmin}{a_{_+}\tilde{\Psi}'_{_-}(-\bmin)(\bmin + \bpl)} = - \frac{(a_{_-} - a_{_+}) \bmin}{a_{_+} \tilde{\Psi}'_{_-}(-\bmin) (\bmin + \bpl)}.
\end{split}
\end{equation}
\end{proof}

Recall that the total mass of $\langle m, \mathbf{1} \rangle$ and the constant $C$ have been computed in \eqref{totalmass} and \eqref{bigC}, respectively. Define
\begin{equation}
\label{normalizingc2}
c_2 \coloneqq \frac{C}{\langle m, \mathbf{1} \rangle}.
\end{equation}
\begin{prop}
\label{lemmaoccupationmeasurexmin1}
The density $\nu(dx)$ of the invariant distribution of $(Y_t)_{t \geq 0}$ is such that, when $x \to 0$, 
\begin{equation}
\nu(x) \sim c_2 x^{ - 1 + \bmin}.
\end{equation}
\end{prop}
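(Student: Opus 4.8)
The plan is to reduce this proposition to Lemma~\ref{lemmabehaviourdensitynear0} by unwinding the change of variables $(Y_t)_{t\geq 0} = (e^{\eta_t})_{t\geq 0}$ and inserting the normalising constant; the genuine analytic work has already been carried out, and what remains is a bookkeeping step translating the exponential decay of $\tilde{m}$ at $+\infty$ into a power-law behaviour of $\nu$ at $0$.

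First I would record the relation between the invariant density $m(x)$ of $Y$ and the invariant density $\overline{m}(y)$ of $\eta$. Since $Y = e^{\eta}$, pushing the measure $\overline{m}(y)\,dy$ forward through $y \mapsto e^y$ gives $m(x) = \overline{m}(\log x)/x$ for $x>0$; equivalently this is visible directly by comparing \eqref{densityoccm} and \eqref{densitymeasureeta} together with $c(x) = \underline{c}(e^{\log x})\,x$. For $x \in (0,1)$ we have $\log x < 0$, so by the definition $\tilde{m}(y) = \overline{m}(-y)$ we may write $\overline{m}(\log x) = \tilde{m}(-\log x)$, with $-\log x \to +\infty$ as $x \to 0^+$.

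Next I would invoke Lemma~\ref{lemmabehaviourdensitynear0}, valid under \eqref{existencehyp} and Cramér's condition \eqref{Cramerscondition}: $\tilde{m}(y) \sim C e^{-\bmin y}$ as $y \to \infty$, with $C$ given by \eqref{bigC}. Substituting $y = -\log x$ yields $\overline{m}(\log x) \sim C e^{\bmin \log x} = C x^{\bmin}$, hence $m(x) = \overline{m}(\log x)/x \sim C x^{\bmin - 1}$ as $x \to 0^+$. Finally, dividing by the total mass $\langle m, \mathbf{1}\rangle$, which is finite under \eqref{existencehyp} by Proposition~\ref{lemmadensityoccupationmeasure}\ref{prop2} and computed in \eqref{totalmass}, and recalling the definition \eqref{normalizingc2} of $c_2 = C/\langle m, \mathbf{1}\rangle$, gives
\begin{equation}
\nu(x) = \frac{m(x)}{\langle m, \mathbf{1}\rangle} \sim \frac{C}{\langle m, \mathbf{1}\rangle}\, x^{-1 + \bmin} = c_2\, x^{-1 + \bmin}, \qquad x \to 0.
\end{equation}

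There is no serious obstacle: the only point that requires a word of care is that the asymptotic equivalence ``$\sim$'' is preserved under the smooth substitution $x = e^{-y}$ and under multiplication by the continuous, strictly positive factor $1/(x\langle m, \mathbf{1}\rangle)$, which is immediate. As a consistency check one may note that performing exactly the same manipulation with Lemma~\ref{lemmadensityetapositive} in place of Lemma~\ref{lemmabehaviourdensitynear0} reproduces Proposition~\ref{lemmaoccupationmeasurexmagg1}, so the two boundary behaviours of $\nu$ (at $0$ and at $+\infty$) arise through a single mechanism.
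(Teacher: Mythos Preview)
Your proposal is correct and follows exactly the route the paper intends: the proposition is stated without proof as an immediate corollary of Lemma~\ref{lemmabehaviourdensitynear0} via the change of variables $x=e^y$ and normalisation by $\langle m,\mathbf{1}\rangle$, in complete parallel with how Proposition~\ref{lemmaoccupationmeasurexmagg1} is obtained from Lemma~\ref{lemmadensityetapositive}. There is nothing to add.
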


\section{An example}
\label{section5}
We consider   
\begin{equation}
\label{rhomonomial}
\rho(s)=s^{\gamma -1}, 
\end{equation}
for some parameter $\gamma \geq 1$.
In particular, for $\gamma = 1$ we have the case in which the fragmentation is uniform.
Since 
\begin{equation}
\int_0^1 \log(s) \; s^{\gamma -1 } ds = - \frac{1}{\gamma^2},
\end{equation}
condition \eqref{existencehyp} summarizes as 
\begin{equation}
\label{boundsintheexample}
a_{_+} <  \frac{1}{\gamma^2}  < a_{_-}
\end{equation}
and, when it holds, the Malthus exponent is given by
\begin{equation}
\label{lambdaex}
\lambda = \int_0^1 (1-s) s^{\gamma -1 } ds = 
 \frac{1}{\gamma (\gamma + 1)}.
\end{equation}
The two Lévy processes $\eta^-$ and $\eta^+$ have Laplace exponents
\begin{equation}
\label{psiexpm}
\tilde{\Psi}_{_{\pm}} (q) = a_{_{\pm}} q + \int_0^1 (s^{q+ \gamma - 1} - s^{\gamma - 1}) ds = a_{_{\pm}} q - \frac{q}{\gamma (q+ \gamma)}.
\end{equation} 
We note that 
\begin{equation}
\tilde{\Psi}_{\pm} (q) = q \left (\frac{  a_{\pm} \gamma q+ a_{\pm}  \gamma^2 - 1}{\gamma (q+ \gamma)}  \right)
\end{equation} 
from which we deduce from \eqref{boundsintheexample} that 
\begin{equation}
\bpl = \frac{1 -  a_{_+} \gamma^2}{a_{_+} \gamma} > 0
\end{equation}
and 
\begin{equation}
- \bmin = \frac{1 -  a_{_-} \gamma^2}{a_{_-} \gamma} < 0.
\end{equation}
In this case the Cramér's condition is satisfied and, moreover, it is possible to invert the Laplace transform \eqref{laplacetransformnegativedensity} and obtain an explicit expression for the asymptotic profile also in the negative half-line. More precisely, we get
\begin{equation}
\begin{split}
\mc L ( \tilde{m}  ) (q) & = \frac{ a_{_+} \gamma q+ a_{_+} \gamma^2 - 1 }{a_{_+}  \left ( a_{_-} \gamma q+ a_{_-} \gamma^2 - 1\right) (q - \bpl)} \\ 
&= \frac{ a_{_+} \gamma ( q - \bpl)}{a_{_+}  (a_{_-} \gamma) ( q + \bmin) (q - \bpl)} \\
&= \frac{1}{a_{_-} (q + \bmin) },
\end{split}
\end{equation}
which is the Laplace transform of  $x \mapsto e^{-\bmin x}/a_{_-}$. We can conclude that, when $\rho  (s)= s^{\gamma - 1}$
\begin{equation}
\overline{m}(y)=\frac{1}{a_{_-}} e^{ \bmin y}, \qquad \quad y<0
\end{equation}
With a change of variables, for $x < 1$, the invariant measure of $Y$ is
\begin{equation}
m(dx) =\frac{1}{a_{_-}}  x^{-1 + \bmin}dx. 
\end{equation}
In this case, the total mass can be computed directly and one has
\begin{equation}
\langle m, \mathbf{1} \rangle = \frac{1}{a_{_-} \bmin} + \frac{1}{a_{_+} \bpl}.
\end{equation}
To sum up, defining $c_3 = 1/ \langle m, \mathbf{1} \rangle$, the asymptotic profile is
\begin{equation}
\nu(dx)= c_3 \left( \frac{1}{a_{_-}}  x^{-1 + \bmin} \mathbf{1}_{\{0 < x <1\}} +  \frac{1}{a_{_+}}  x^{-(1 + \bpl)} \mathbf{1}_{\{x \geq 1\}} \right) dx.
\end{equation}
In the case of uniform dislocations, \textit{i.e.} $\rho= 1$, we have
\begin{equation}
\nu(dx) = c_3 \left( \frac{1}{a_{_-}}  x^{- 1/a_{_-}}  \mathbf{1}_{\{0 < x <1\}} +  \frac{1}{a_{_+}}  x^{- 1/a_{_+}}  \mathbf{1}_{\{x \geq 1\}} \right) dx,
\end{equation}
with
$$c_3 = \left( \frac{1}{a_{_-} -1} + \frac{1}{1- a_{_+}}\right)^{-1}.$$

\bibliographystyle{plain}	
\bibliography{biblio}

\begin{thebibliography}{10}

\bibitem{BMR02}
Francois Baccelli, David~R. Mcdonald, and Julien Reynier.
\newblock A mean-field model for multiple tcp connections through a buffer
  implementing red.
\newblock {\em TREC}, 2002.

\bibitem{BCG13}
Daniel Balagu\'e, Jos\'e~A. Ca\~nizo, and Pierre Gabriel.
\newblock Fine asymptotics of profiles and relaxation to equilibrium for
  growth-fragmentation equations with variable drift rates.
\newblock {\em Kinet. Relat. Models}, 6(2):219--243, 2013.

\bibitem{BSC11}
H.~T. Banks, Karyn~L. Sutton, W.~Clayton~Thompson, Gennady Bocharov, Dirk
  Roose, Tim Schenkel, and Andreas Meyerhans.
\newblock Estimation of cell proliferation dynamics using {CFSE} data.
\newblock {\em Bull. Math. Biol.}, 73(1):116--150, 2011.

\bibitem{BA67}
George~I. Bell and Ernest~C. Anderson.
\newblock Cell growth and division: I. a mathematical model with applications
  to cell volume distributions in mammalian suspension cultures.
\newblock {\em Biophysical Journal}, 7(4):329–351, 1967.

\bibitem{BLP}
Jean Bertoin.
\newblock {\em L\'evy processes}, volume 121 of {\em Cambridge Tracts in
  Mathematics}.
\newblock Cambridge University Press, Cambridge, 1996.

\bibitem{BERT18}
Jean Bertoin.
\newblock On a Feynman-Kac approach to growth-fragmentation semigroups and
  their asymptotic behaviors.
\newblock {\em arXiv:1804.04905}, 2018.

\bibitem{BW16}
Jean Bertoin and Alexander~R. Watson.
\newblock Probabilistic aspects of critical growth-fragmentation equations.
\newblock {\em Adv. in Appl. Probab.}, 48(A):37--61, 2016.

\bibitem{BW18}
Jean Bertoin and Alexander~R. Watson.
\newblock A probabilistic approach to spectral analysis of growth-fragmentation
  equations.
\newblock {\em J. Funct. Anal.}, 274(8):2163--2204, 2018.

\bibitem{CCM11}
Mar\'\i a~J. C\'aceres, Jos\'e~A. Ca\~nizo, and St\'ephane Mischler.
\newblock Rate of convergence to an asymptotic profile for the self-similar
  fragmentation and growth-fragmentation equations.
\newblock {\em J. Math. Pures Appl. (9)}, 96(4):334--362, 2011.

\bibitem{CLODLMP09}
Vincent Calvez, Natacha Lenuzza, Dietmar Oelz, Jean-Philippe Deslys, Pascal
  Laurent, Franck Mouthon, and Beno\^\i t Perthame.
\newblock Size distribution dependence of prion aggregates infectivity.
\newblock {\em Math. Biosci.}, 217(1):88--99, 2009.

\bibitem{CLOEZ17}
Bertrand Cloez.
\newblock Limit theorems for some branching measure-valued processes.
\newblock {\em Adv. in Appl. Probab.}, 49(2):549--580, 2017.

\bibitem{DE16}
Marie Doumic and Miguel Escobedo.
\newblock Time asymptotics for a critical case in fragmentation and
  growth-fragmentation equations.
\newblock {\em Kinet. Relat. Models}, 9(2):251--297, 2016.

\bibitem{DHNR15}
Marie Doumic, Marc Hoffmann, Nathalie Krell, and Lydia Robert.
\newblock Statistical estimation of a growth-fragmentation model observed on a
  genealogical tree.
\newblock {\em Bernoulli}, 21(3):1760--1799, 2015.

\bibitem{DG10}
Marie Doumic~Jauffret and Pierre Gabriel.
\newblock Eigenelements of a general aggregation-fragmentation model.
\newblock {\em Math. Models Methods Appl. Sci.}, 20(5):757--783, 2010.

\bibitem{EK86}
Stewart~N. Ethier and Thomas~G. Kurtz.
\newblock {\em Markov processes}.
\newblock Wiley Series in Probability and Mathematical Statistics: Probability
  and Mathematical Statistics. John Wiley \& Sons, Inc., New York, 1986.
\newblock Characterization and convergence.

\bibitem{KPS13}
Pakdaman Khashayar, Beno\^\i t Perthame, and Delphine Salort.
\newblock Adaptation and fatigue model for neuron networks and large time
  asymptotics in a nonlinear fragmentation equation.
\newblock {\em The Journal of Mathematical Neuroscience}, 2014.

\bibitem{KKR12}
Alexey Kuznetsov, Andreas~E. Kyprianou, and Victor Rivero.
\newblock The theory of scale functions for spectrally negative {L}\'evy
  processes.
\newblock In {\em L\'evy matters {II}}, volume 2061 of {\em Lecture Notes in
  Math.}, pages 97--186. Springer, Heidelberg, 2012.

\bibitem{KL10}
A.~E. Kyprianou and R.~L. Loeffen.
\newblock Refracted {L}\'evy processes.
\newblock {\em Ann. Inst. Henri Poincar\'e Probab. Stat.}, 46(1):24--44, 2010.

\bibitem{KPP14}
A.~E. Kyprianou, J.~C. Pardo, and J.~L. P\'erez.
\newblock Occupation times of refracted {L}\'evy processes.
\newblock {\em J. Theoret. Probab.}, 27(4):1292--1315, 2014.

\bibitem{KYPR14}
Andreas~E. Kyprianou.
\newblock {\em Fluctuations of {L}\'evy processes with applications}.
\newblock Universitext. Springer, Heidelberg, second edition, 2014.
\newblock Introductory lectures.

\bibitem{LP09}
Philippe Lauren\c{c}ot and Beno\^\i t Perthame.
\newblock Exponential decay for the growth-fragmentation/cell-division
  equation.
\newblock {\em Commun. Math. Sci.}, 7(2):503--510, 2009.

\bibitem{MD86}
J.~A.~J. Metz and O.~Diekmann, editors.
\newblock {\em The dynamics of physiologically structured populations},
  volume~68 of {\em Lecture Notes in Biomathematics}.
\newblock Springer-Verlag, Berlin, 1986.
\newblock Papers from the colloquium held in Amsterdam, 1983.

\bibitem{MT09}
Sean Meyn and Richard~L. Tweedie.
\newblock {\em Markov chains and stochastic stability}.
\newblock Cambridge University Press, Cambridge, second edition, 2009.
\newblock With a prologue by Peter W. Glynn.

\bibitem{M06}
Philippe Michel.
\newblock Existence of a solution to the cell division eigenproblem.
\newblock {\em Math. Models Methods Appl. Sci.}, 16(7, suppl.):1125--1153,
  2006.

\bibitem{MMP05}
Philippe Michel, St\'ephane Mischler, and Beno\^\i t Perthame.
\newblock General relative entropy inequality: an illustration on growth
  models.
\newblock {\em J. Math. Pures Appl. (9)}, 84(9):1235--1260, 2005.

\bibitem{MS16}
S.~Mischler and J.~Scher.
\newblock Spectral analysis of semigroups and growth-fragmentation equations.
\newblock {\em Ann. Inst. H. Poincar\'e Anal. Non Lin\'eaire}, 33(3):849--898,
  2016.

\bibitem{PPS13}
Khashayar Pakdaman, Beno\^\i t Perthame, and Delphine Salort.
\newblock Relaxation and self-sustained oscillations in the time elapsed neuron
  network model.
\newblock {\em SIAM J. Appl. Math.}, 73(3):1260--1279, 2013.

\bibitem{PERTHAME07}
Beno\^\i t Perthame.
\newblock {\em Transport equations in biology}.
\newblock Frontiers in Mathematics. Birkh\"auser Verlag, Basel, 2007.

\bibitem{PR05}
Beno\^\i t Perthame and Lenya Ryzhik.
\newblock Exponential decay for the fragmentation or cell-division equation.
\newblock {\em J. Differential Equations}, 210(1):155--177, 2005.

\bibitem{RENAUD14}
Jean-Fran\c{c}ois Renaud.
\newblock On the time spent in the red by a refracted {L}\'evy risk process.
\newblock {\em J. Appl. Probab.}, 51(4):1171--1188, 2014.

\end{thebibliography}

\end{document}